\crefname{hypothesis}{Hypothesis}{Hypotheses}
\newcommand{\shorttitle}{Convergence analysis of pixel-driven transforms}
\newcommand{\titlename}{Convergence analysis of pixel-driven Radon and fanbeam transforms}
\headers{\shorttitle}{Kristian Bredies and Richard Huber}
\title{\titlename \thanks{November 4, 2020. %
    \funding{
      International Research Training Group %
      ``Optimization and Numerical Analysis for Partial Differential Equations with Nonsmooth Structures'', funded by the German Research Council (DFG) and
      the Austrian Science Fund (FWF)
(grant W1244).
  }}}
\author{Kristian Bredies%
  \and Richard Huber\thanks{Institute of Mathematics and Scientific Computing,
    Heinrichstra\ss{}e 36, University of Graz, 8010 Graz, Austria 
    (\email{kristian.bredies@uni-graz.at}, \email{richard.huber@uni-graz.at}).
  NAWI Graz \url{https://www.nawigraz.at/}, BioTechMed Graz \url{https://www.biotechmedgraz.at/}.}
}
\DeclareMathOperator{\linspan}{span}
\DeclareMathOperator{\supp}{supp}
\newcommand{\RR}{\mathbb{R}}
\newcommand{\nchoice}{n}
\newcommand{\Radon}{\mathcal{R}}
\newcommand{\RadonHat} {\overline{\Radon}_{\delta_s}}
\newcommand{\RadonS} {\Radon_{\delta_s}}
\newcommand{\RadonPhi}{\Radon_{\delta_s,\delta_\varphi}}
\newcommand{\RadonApp}{\Radon_{\delta_s,\delta_\varphi}^{\delta_x}}
\newcommand{\RadonDisc}{\mathbf{R}}
\newcommand{\RadonLA}{\Radon^{\AL}}
\newcommand{\RadonSA}{\Radon^{\mathcal{F}}}
\newcommand{\Fanbeam}{\mathcal{F}}
\newcommand{\FanbeamHat}{\overline{\Fanbeam}_{\delta_\xi}}
\newcommand{\FanbeamApp}{\Fanbeam_{\delta_\xi,\delta_\alpha}^{\delta_x}}
\newcommand{\FanbeamDisc}{\mathbf{F}}
\newcommand{\myG}{\mathcal{G}}
\newcommand{\myGApp}{\myG_{\delta_\xi,\delta_\alpha}^{\delta_x}}
\newcommand{\myGS}{\myG_{\delta_\xi}}
\newcommand{\myGPhi}{\myG_{\delta_\xi,\delta_\alpha}}
\newcommand{\hausdorff}[1]{\mathcal{H}^{#1}}
\newcommand{\restrict}{\,\llcorner\,}
\newcommand{\DW}{W}
\newcommand{\RE}{R_E}
\newcommand{\RD}{R}
\newcommand{\AL}{\mathcal{A}}
\newcommand{\mM}{\mathcal{M}}
\newcommand{\modcont}[1]{\omega_{#1}}
\newcommand{\dd}[1]{\,\mathrm{d}{#1}}
\newcommand{\inprod}{\cdot}
\newcommand{\set}[2]{\{{#1}\,:\,{#2}\}}
\newcommand{\bigset}[2]{\bigl\{{#1}\,:\,{#2}\bigr\}}
\newcommand{\sett}[1]{\{{#1}\}}
\newcommand{\abs}[1]{|{#1}|}
\newcommand{\bigabs}[1]{\bigl|{#1}\bigr|}
\newcommand{\Bigabs}[1]{\Bigl|{#1}\Bigr|}
\newcommand{\norm}[1]{\|{#1}\|}
\newcommand{\scp}[3][]{\langle{#2},{#3}\rangle_{#1}}
\newcommand{\conv}{\ast}
\newcommand{\placeholder}{\,\cdot\,}
\newcommand{\kronO}{\mathcal{O}}
\newcommand{\eulerE}{\mathrm{e}}
\begin{document}

\maketitle

\begin{abstract}
This paper presents a novel mathematical framework for understanding pixel-driven approaches for the parallel beam Radon transform as well as for the fanbeam transform, showing that with the correct discretization strategy, convergence --- including rates --- in the $L^2$ operator norm can be obtained. These rates inform about suitable strategies for discretization of the occurring domains/variables, and are first established for the Radon transform. In particular, discretizing the detector in the same magnitude as the image pixels (which is standard practice) might not be ideal and in fact, asymptotically smaller pixels than detectors lead to convergence.  
Possible adjustments to limited-angle and sparse-angle Radon transforms are discussed, and similar convergence results are shown. In the same vein, convergence results are readily extended to a novel pixel-driven approach to the fanbeam transform. Numerical aspects of the discretization scheme are discussed,
and it is shown in particular that with the correct discretization strategy, the typical high-frequency artifacts can be avoided. 
\end{abstract}

\begin{keywords}
  Radon transform, fanbeam transform, computed tomography,
  convergence analysis, discretization schemes, 
  pixel-driven projection and backprojection.
\end{keywords}

\begin{AMS}
  44A12, %
  65R10, %
  94A08, %
  41A25.  %
\end{AMS}

\section{Introduction}
Projection-based tomography is a key tool for imaging in various scientific fields --- including medicine \cite{Hsieh_CT_principles}, materials science \cite{leary_analytical_2016}, astro-physics \cite{doi:10.1002/asna.200310234} and seismography  \cite{RAWLINSON2010101} --- as it allows to extract three-dimensional information from a series of two-dimensional projections.  Mathematically speaking, such tomography problems correspond to the inversion of the Radon transform \cite{Radon17,Ammari08_book_methematics_of_medical_imaging,Deans_Radon_applications_1993,Natterer:2001:MCT:500773}. That is, the line integral operator according to
\begin{equation}
\Radon f (s,\varphi)= \int_\mathbb{R} f( s \vartheta(\varphi)+t \vartheta^\perp(\varphi)) \dd{t},
\end{equation} i.e., the integral of a function $f$ along the line with projection angle $\varphi$, the associated normal and tangential vectors $\vartheta, \vartheta^\perp$, and detector offset $s$. Due to the high relevance of such imaging methods, many reconstruction approaches have been proposed, relevant examples include the filtered backprojection inversion formulas \cite{Radon17,Ammari08_book_methematics_of_medical_imaging}, iterative algebraic methods (e.g., ART, SART, SIRT) \cite{1951AmJM...73..615L,Gilbert_Sirt,SART_ALgo,GORDON1970471}, or variational imaging approaches \cite{Scherzer:2008:VMI:1502016,C8NR09058K,Dong2013,LaRoque:08,doi:10.1118/1.3371691}. 
Since all methods require some form of discrete version of the Radon transform and its adjoint --- the backprojection ---  a number of possible discretization schemes for the Radon transform  were proposed.

In this context, the class of ``fast schemes'' \cite{Averbuch_2D_Discrete_Radon,DRT_Beylkin_1987,Averbuch01fastslant,FDRT_Kelley_1993,1000260,doi:10.1137/S0097539793256673,KINGSTON20062040} consists of  approaches  which exploit connections between the Radon transform and the Fourier transform \cite{markoe_2006}. The algorithms are very efficient since they use the fast Fourier transform \cite{trove.nla.gov.au/work/21788406} and feature an ``explicit'' inversion formula,  allowing for  direct reconstruction.   This connection to the Fourier transform can, however, only be exploited under specific geometrical circumstances, making them unsuitable for most tomography applications \cite{Man_2004}.

Further, direct inversion schemes cannot always be applied. For instance,
in X-ray tomography, in order to reduce the radiation dose the sample or patient needs to endure, the number of measured projections is often reduced which makes the direct inversion unsuitable due to instability. To maintain the required quality of reconstructions, the use  of variational imaging methods became more prevalent,  in order to exploit prior information or assumptions \cite{1614066,Mairal:2014:SMI:2747300.2747301}. These methods do not require an exact inversion formula as they  consider an augmented or constrained inversion problem. Instead, a good, efficient and  widely applicable approximation of the Radon transform is needed.
 
To this point, distance-driven methods \cite{1239600,Man_2004,Chen_2015} and ray-driven methods \cite{Siddon1985FastCO,doi:10.1118/1.4761867,Path_through_pixels,Hsieh_CT_principles} were developed which are more flexible in comparison to Fourier methods. In the following, we only shortly discuss ray-driven methods, but similar observations can be made for distance-driven methods. Ray-driven methods consist of computing the line integral by discretizing the line itself and employing suitable %
quadrature formulas. %
A special case of this method consists of determining the length of the intersection of the line with any pixel and using these as weights in a sum over pixel values (which corresponds to using zero-order quadrature on the intersections).
 Note, however, that the determination of these weights is non-trivial and cannot easily be extended to higher dimensions. Moreover, the corresponding backprojection operators, i.e., the adjoints, generate strong artifacts, such that more straightforward discretizations of the adjoint are often used in practice, see, e.g.,~\cite{Xie_CUDA_paralelization_2015,Du2017,Syben_Python_reconstruction_in_NN_2019}.  Since ray-driven methods are efficient and versatile, they are prevalent in countless applications.

However, for the use of iterative methods such as in  Landweber-type approaches (e.g.,~SIRT) or in optimization steps of variational methods,  a proper backprojection is of great importance. Equally important, for these algorithms to work, it is (theoretically) necessary that the discrete Radon transform and discrete backprojection  are adjoint. Though widely used, ray-driven methods might not be ideal in this regard, as their adjoints
tend to introduce Moir\'{e} pattern artifacts, see e.g.~\cite{Liu_GPU_DDP_2017,Man_2004}. Thus, it might be reasonable to consider a projection method whose adjoint is a proper approximation of the backprojection in its own right.

To this point, one considers pixel-driven methods (in higher dimensions also voxel-driven methods) \cite{Hsieh_CT_principles,322963,Qiao2017ThreeNA,4331812}. These methods are based on a discretization of the backprojection via one-dimensional linear interpolation in the offset variable. This leads to a widely applicable Radon transform performing so-called ``anterpolation'' operations, which are the adjoints of interpolation. In this context, anterpolation means that pixels are projected onto the detector line, and the energy is linearly distributed onto the closest detectors with respect to the orthogonal distance. These methods admit a simpler structure than the ray-driven methods since instead of  taking the isotropic pixel structure into account, only the normal distance to lines is required. 
It is obvious from the derivation that the pixel-driven discretizations are adjoint and the backprojection is approximated reasonably well, but conversely, it is not obvious that the Radon transform is. This issue manifests in the fact that pixel-driven methods create strong oscillatory behavior (high-frequency artifacts) along some projection angles \cite{701833,1239600}, and therefore have gained little attention in practical applications in spite of its easy and efficient implementation and exact adjointness.

While the classical Radon transform considers parallel beams, some applications require different geometries, in particular, fanbeam or conebeam geometries \cite{Emissiontomo_2005,Natterer:2001:MCT:500773}. To reconstruct fanbeam data, rebinning --- recasting the data in a parallel setting at the cost of interpolation errors ---  can be used which then allows an inversion via the well-understood approaches for parallel CT \cite{DREIKE1976459}. For more sophisticated imaging methods, discretizations of the fanbeam transform and backprojection are required. To this point, many methods can be extended from the parallel beam to the fanbeam setting, see \cite{HaEfficientAR,8013154,Man_2004} and references therein. In particular, the same holds true for the pixel-driven approach \cite{HERMAN1976259,Johnson_1996,Zeng_Gullman1996}, though to the best of our knowledge, only the pixel-driven backprojection was considered for fanbeam geometry, but not the corresponding forward operator.  To this point, we propose a novel pixel-driven fanbeam transform which is adjoint to the pixel-driven backprojection and a proper discretization in its own right.

In the existing literature, there is only little discussion (see, e.g., \cite{Qiao2017ThreeNA,Man_2004,701833,870265}) of the worst-case error all these methods generate compared to the (true) continuous Radon transform or fanbeam transform and of what this error depends on. To the authors' best knowledge, there is no rigorous mathematical discussion on convergence properties for pixel-driven and ray-driven methods and in particular, no mathematical ``superiority'' of ray-driven or distance-driven methods was shown.
This paper aims at filling this gap and %
presents a rigorous convergence analysis of pixel-driven methods in a framework that easily allows the extension to pixel-driven methods for more general projection problems. This analysis shows that convergence, including rates, in the operator norm can be obtained if a suitable discretization strategy is pursued. In particular, this strategy leads to a suppression of high-frequency artifacts and thus informs that the reason for the oscillations being observed in the literature is not a defect of the method itself, but rather a consequence of unsuitable discretization parameters.

 The paper is organized as follows: Our main results are shown in  \Cref{Sec:Radontransform}  and consist in the mathematical framework and analysis of a pixel-driven parallel Radon transform discretization. After setting up the notation and definition in \Cref{Subsec:Radon_Def}, in \Cref{Subsec:Radon_Convergence}, convergence in operator norm to the continuous Radon transform is proven. In \Cref{Subsec:Limited_information}, adjustments to limitations in the angular range are considered, namely limited angles and sparse angles settings. In \Cref{Sec:Fanbeam}, the mathematical analysis is extended to the novel discrete fanbeam transform based on pixel-driven methods following a similar structure as \Cref{Sec:Radontransform}.
\Cref{Sec:Numeric} considers  numerical aspects of these discretizations, and discusses numerical experiments showcasing the practical applicability of the results. \Cref{sec:conclusions} concludes with some remarks and a brief outlook.

\section{The discrete Radon transform} \label{Sec:Radontransform}

\subsection{Derivation of pixel-driven methods} \label{Subsec:Radon_Def}
\usetikzlibrary{decorations.pathreplacing}
\usetikzlibrary{positioning,patterns}
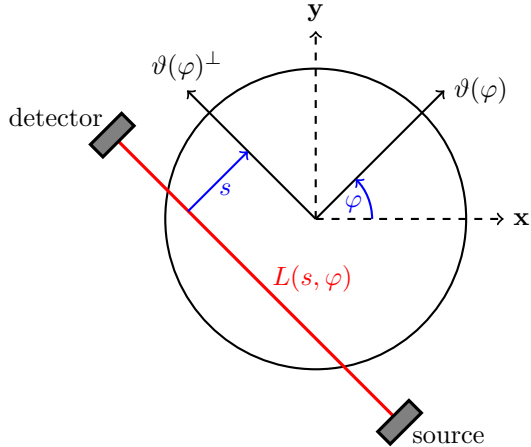
\begin{figure}[]
\center
 \begin{tikzpicture}
 
\draw[->,thick,dashed] (0,0) -- (0,2.5) node [above] {$\bold y$};
\draw[->,thick,dashed] (0,0) -- (2.5,0) node [right] {$\bold x$};

  \draw[thick] (0,0) circle (2cm );
\draw[->,thick] (0,0) -- (-1.41-0.3,+1.41+0.3)node[above] {$\vartheta(\varphi)^\perp$} ;

\draw[very thick,red](-.705-1.5-0.5,2.115-1.5+0.5)-- node[xshift=0.75cm,red]{$L(s,\varphi)$}  (2.115-1.5+0.5,-.705-1.5-0.5) ;
\draw[very thick] (-.705-1.5-0.5,2.115-1.5+0.5) (2.115-1.5+0.5,-.705-1.5-0.5) node (first)[draw, rotate=45,fill=gray] {\quad \quad} node [ right of =first,below of=first,node distance =0.3cm, rotate=0,xshift=0.35cm,yshift=0.1cm]{source};
\draw[very thick] (2.115-1.5+0.5,-.705-1.5-0.5) (-.705-1.5-0.5,2.115-1.5+0.5) node (first)[draw, rotate=45,fill=gray] {\quad \quad} node [ left of =first,above of =first, node distance =0.6cm,rotate=0,yshift=-0.35cm,xshift=-0.15cm]{detector};

\draw[thick,->] (0,0) -- (1.41+0.3,1.41+0.3) node[above,right] {$\vartheta(\varphi)$}  ;

\draw[thick,<-,blue] (-.9,.9) -- node [blue,midway,xshift=0.1cm,yshift=-0.1cm] {$s$} (-0.9-0.8,.9-0.8);

\draw[->,thick,blue] (0.75,0) arc (0:45:0.75);
  \node[blue,xshift=0.1cm,yshift=-0.05cm] at (0.4,0.25)  {$\varphi$};

  \end{tikzpicture}
  \caption{Geometry for the Radon transform. Source, detector and the connecting line $L(s,\varphi)$ parametrized by $t \mapsto s \vartheta(\varphi) + t   \vartheta(\varphi)^\perp$, where $\vartheta(\varphi)$
    is the projection direction and $s$ the detector offset. The direction $\vartheta(\varphi)^\perp$ corresponds to a rotation of $\vartheta(\varphi)$ by $\frac{\pi}2$ and is parallel to $L(s,\varphi)$.}
    \label{Fig:radon_geometry}
\end{figure}

In this subsection we motivate the pixel-driven approach by approximation of the continuous Radon transform in multiple steps, thus allowing to interpret it from a rigorous mathematical perspective. 
Moreover, we describe the framework and set up the notation used in this section.

Let $\Omega = B(0,1)$ be the 2-dimensional unit ball and
$\Omega' = {]{-1,1}[} \times S^1$, with all functions defined on
$\Omega$ and $\Omega'$ being extended by zero to $\RR^2$ and
$\RR \times S^1$, respectively. We will tacitly identify $[-\pi,\pi[$
with $S^1$ via the transformation
$\vartheta(\varphi)= \left(\cos(\varphi), \sin(\varphi)\right)$ such
that $\Omega'$ is identified with $\mathbb R \times [-\pi,\pi[$.

\begin{definition}
The \emph{Radon transform} of a compactly supported continuous function
$f \colon \RR^2 \to \RR$ 
is defined as %
\begin{equation} \label{equ:def_continuous_Radontrans}
  [\Radon
  f](s,\varphi) = \int_{\RR} f\bigl(s \vartheta(\varphi) + t
  \vartheta(\varphi)^\perp \bigr) \dd{t} = \int_{\set{x \in \RR^2}{x \inprod
      \vartheta(\varphi) = s}} f(x) \dd{\hausdorff{1}(x)}
\end{equation}
for $(s, \varphi) \in \RR \times {[{-\pi,\pi}[}$, where 
$\vartheta(\varphi)^\perp = \left(-\sin(\varphi), \cos(\varphi)\right)$ and
$\hausdorff{1}$ denotes the one-dimensional Hausdorff measure
\cite{folland1984real}.
The \emph{backprojection}
for $g: \RR \times S^1 \to \RR$ continuous and compactly supported
is given by
\begin{equation}
  \label{eq:def_backprojection}
  [\Radon^* g](x) = \int_{[-\pi,\pi[} g(x\cdot \vartheta(\varphi),\varphi)\dd\varphi
  \qquad\text{for} \ x \in \RR^2.
\end{equation}
\end{definition}
See~\cref{Fig:radon_geometry} for an illustration of the underlying geometry.
Considering $f$ supported on $\Omega$, definition
\cref{equ:def_continuous_Radontrans} can extended to a linear and
continuous operator $\Radon: L^2(\Omega) \to L^2(\Omega')$. Likewise,
$\Radon^*$ according to~\cref{eq:def_backprojection} yields a linear
and continuous operator $L^2(\Omega') \to L^2(\Omega)$. These
operators are indeed adjoint. %
The backprojection is often required in the context of tomographic
reconstruction
methods %
where both Radon transform and backprojection need to be discretized
in practice.  %
In order to justify the use of these operators in iterative
reconstruction methods, it is important for the discrete Radon
transform and the discrete backprojection to be adjoint operations.
However, adjointness of the discrete operations does not automatically
follow if the operators are discretized independently, which is a
common strategy in applications.

In the following, we derive the pixel-driven approach from a
mathematical perspective, allowing for an interpretation in terms of
approximation properties. The approach bases on approximating the line
integral in \eqref{equ:def_continuous_Radontrans} by an area integral
via
\begin{align}
  [\RadonHat f](s, \varphi) &= \frac1{\delta_s^2} \int_{\RR^2} w_{\delta_s}(x \inprod \vartheta(\varphi) - s) f(x) \dd{x}
  \\&= \frac{1}{\delta_s^2}\int_{\mathbb{R}} w_{\delta_s}(t-s) [\Radon f](t,\varphi) \dd t= \Bigl[ [\Radon f](\placeholder,\varphi) \conv \frac{w_{\delta_s}}{\delta_s^2} \Bigr] (s), \notag
\end{align}
where $w_{\delta_s}(t) = \max(0, \delta_s - \abs{t})$ and $\delta_s>0$ is an approximation parameter. Since the Radon transform corresponds, for each angle, to the convolution with a line measure, an approximation is found by the convolution with a hat-shaped function with width $2 \delta_s$. From a modeling perspective, this can be understood as accounting for detectors of the size $\delta_s$ possessing hat-shaped ``sensitivity profiles''.  The corresponding adjoint of the approximation is itself a reasonable approximation of the backprojection,
which can be described as
\begin{align}
  \notag
  [(\RadonHat)^* g] (x) &=
  \frac{1}{\delta_s^2}\int_{[-\pi,\pi[} \int_\RR w_{\delta_s}(x\cdot \vartheta(\varphi)-s) g(s,\varphi) \dd{s} \dd{\varphi} %
      = \Bigl[\Radon^*  (g \conv_1 \frac{w_{\delta_s}}{\delta_s^2} )\Bigr] (x),
\end{align}
where
$\conv_1$ denotes the convolution along the offset direction
$s$.  %
In the discrete Radon transform and backprojection that we derive in
the following, the local averaging after transformation becomes an
anterpolation step while the local averaging before the backprojection
becomes an interpolation step.

Next, we aim at discretizing these integrals on suitable discrete image and
sinogram spaces. First, we choose the discrete sinogram space
associated with a set of $Q$ angles
$\varphi_1,\ldots,\varphi_Q \in {[{-\pi,\pi}[}$,
$\varphi_1 < \varphi_2 < \ldots < \varphi_Q$, and an equispaced grid
of $P$ offsets $s_1,\ldots,s_P \in \RR$ such that
$s_p = \delta_s \bigl(p - \frac{P+1}2 \bigr)$ for each $p$ and some detector width
$\delta_s > 0$ (typically, $\delta_s = 2/P$).  A sinogram pixel is the product $S_p \times \Phi_q$
where $S_p = s_p + {[{-\delta_s/2, \delta_s/2}[}$ and
$\Phi_q = {[{(\varphi_{q-1} + \varphi_q)/2, (\varphi_{q} +
    \varphi_{q+1})/2}[}$ where $\varphi_0 = \varphi_Q - 2\pi$,
$\varphi_{Q+1} = \varphi_1 + 2\pi$ and the intervals are taken modulo
$2\pi$.  We also denote by
$\delta_\varphi = \max_{q=1,\ldots,Q} \varphi_{q+1} - \varphi_q$ the
angular discretization width.
The image is discretized by a $N \times M$ grid with pixel
size $\delta_x > 0$ and grid points
$x_{ij} = \delta_x \bigl(i - (N+1)/2, j - (M+1)/2\bigr)$. The associated pixel is then
$X_{ij} = x_{ij} + {[{-\delta_x/2, \delta_x/2}[}^2$,
the associated discrete spaces are given by
\begin{equation}\label{equ:def_discrete_image_sinogram_spaces}
  \begin{aligned}
  U &= \linspan\set{\chi_{X_{ij}}}{i = 1,\ldots,N, \
    j = 1,\ldots,M}, %
  \\
  \qquad
  V &= \linspan\set{\chi_{S_p \times \Phi_q}}{p=1,\ldots,P,
    \ q=1,\ldots,Q}, 
  \end{aligned}
\end{equation}
equipped with the scalar products on $L^2(\RR^2)$ and
$L^2(\RR \times S^1)$, respectively.  They can be identified with
$U = \RR^{N \times M}$ and $V = \RR^{P \times Q}$ equipped with the
scalar products
\[
  \scp[U]{f}{u} = \delta_x^2 \sum_{i,j=1}^{N,M} f_{ij}u_{ij}
  \quad \text{and} \quad
  \scp[V]{g}{v} = \delta_s \sum_{p=1}^P \sum_{q=1}^Q \Delta_q g_{pq}
  v_{pq},
\]
where $\Delta_q= (\varphi_{q+1}-\varphi_{q-1})/2$ denotes the
length of $\Phi_q$.

Provided that the
support of $f$ is contained in the union of all pixels, we can
discretize $f$ by
\begin{equation}
  f_{\delta_x} = \sum_{i,j=1}^{N,M} \delta_x^2 f_{ij} \delta_{x_{ij}} ,
  \quad
  f_{ij} = \frac{1}{\delta_x^2} \int_{X_{ij}} f(x) \dd{x}, \quad \text{for } i=1,\dots, N, \ j=1,\dots, M,\label{eq:discretization_image}
\end{equation}
where $\delta_{x_{ij}}$ corresponds to a delta peak in $x_{ij}$, i.e.,
one replaces $f$ by delta peaks in the pixel centers weighted by their
area $\delta_x^2$. Note that $\delta_x^2 f_{ij}$ corresponds to
the total mass associated with the pixel $X_{ij}$, i.e., the mass of
each pixel is shifted into its center.
 
The approximation $\RadonHat$ can still be applied to $f_{\delta_x}$, leading
to the semi-discrete Radon transform
\begin{equation}
[\RadonHat f_{\delta_x}] (s, \varphi) = \frac{\delta_x^2}{\delta_s^2} 
\sum_{i,j=1}^{N,M} w_{\delta_s}(x_{ij} \inprod \vartheta(\varphi) - s) f_{ij}.
\end{equation}
Further restricting to functions %
that are piecewise constant on the partition $(S_p\times \Phi_q)_{pq}$ with values extrapolated from the values in $(s_p,\varphi_q)$  yields the following definition.
\begin{definition} 
  The \emph{fully discrete Radon transform} is defined by
\begin{equation} \label{equ:def_Radon_all_discrete}
 [\RadonApp f](s, \varphi) = \frac{\delta_x^2}{\delta_s^2} \sum_{p=1}^P \sum_{q=1}^Q \chi_{S_p}(s)\chi_{\Phi_q}(\varphi) 
\sum_{i,j=1}^{N,M} w_{\delta_s}(x_{ij} \inprod \vartheta(\varphi_q) - s_p) f_{ij}.
\end{equation}
The corresponding mapping between the pixel spaces $U$,
$V$ and their identification in terms of pixel values is denoted by
\begin{equation} \label{equ:def_discrete_radon} 
  \RadonDisc \colon U \to V, \quad \quad  [\RadonDisc f]_{pq} =
  \frac{\delta_x^2}{\delta_s^2} \sum_{i,j=1}^{N,M} w_{\delta_s}(x_{ij} \inprod
  \vartheta(\varphi_q) - s_p) f_{ij}.
\end{equation}
\end{definition}
 The operator
 $\RadonDisc$ distributes,
for each $q$,
 the intensity $f_{ij}$ of each pixel
$X_{ij}$ to the
$p$-th detector according to the weights $w_{\delta_s}(x_{ij} \inprod
\vartheta(\varphi_q) -
s_p)$. This is the anterpolation operation that appears in the context
of pixel-driven Radon transforms. For fixed
$(i,j)$, there are at most two
$p$ for which the weight $w_{\delta_s}(x_{ij} \inprod
\vartheta(\varphi_q) -
s_p)$ is non-zero.  Summarized, the pixel-driven approach has three
ingredients: The approximation of line measures by hat-shaped
functions, the discretization of images by lumping the mass of pixels
to their centers and the extrapolation of sinogram pixels from the
values at their centers.

The adjoint of the fully discrete Radon transform reads as
\begin{equation}\label{equ:def_approx_backprojection}
  [(\RadonApp)^*g](x)= \sum_{i,j=1}^{N,M} \chi_{X_{ij}}(x) \sum_{p=1}^P \sum_{q=1}^Q
  \frac{\Delta_q}{\delta_s} w_{\delta_s}(x_{ij}\cdot \vartheta_q-s_p)
  g_{pq},
\end{equation}
where $g_{pq} = \frac{1}{\delta_s \Delta_q}\int_{S_p}\int_{\Phi_q}
g(s,\varphi) \dd{\varphi}\dd{s}$ and $x \in
\Omega$. On the discrete spaces $U$ and $V$, this means
\begin{equation} \label{equ:def_discrete_backprojection}
  \RadonDisc^*\colon V \to U, \quad \quad  [\RadonDisc^* g]_{ij}=
  \sum_{q=1}^Q \Delta_q \sum_{p=1}^P \frac{1}{\delta_s}
  w_{\delta_s}(x_{ij}\cdot \vartheta(\varphi_q)-s_p)g_{pq}.
\end{equation}
Here, the sum over
$p$ contains at most two non-zero elements. Except on the detector
boundary, $p$ can uniquely be chosen such that $s_{p} < x_{ij} \inprod
\vartheta(\varphi_q) \leq s_{p+1}$, leading to only $p$ and
$p+1$ contributing to the sum. By definition, the latter is then the
linear interpolation of $g_{pq}$ and $g_{(p+1)q}$ at $s_p$ and $s_{p +
  1}$ to the detector offset $x_{ij} \inprod
\vartheta(\varphi_q)$, yielding the well-known form of the
pixel-driven backprojection.

In summary, pixel-driven methods can be considered the result of an abstract
approximation and a subsequent step-by-step discretization of the occurring
variables, such that in each step, the abstract understanding is maintained.
 This allows for a clearer mathematical interpretation and motivates the theoretical procedure in the following section.

\subsection{Convergence analysis} \label{Subsec:Radon_Convergence}
Following the motivation in the previous section, we consider the error of switching from line to area integral as well as the discretization of the occurring functions in order to obtain convergence results.

We identify $\varphi \in {[{-\pi, \pi}[}$ with
$\vartheta(\varphi) \in S^1$ and let
$\dd{\vartheta} = \dd{(\hausdorff{1} \restrict S^1)}$ as well as
$\Theta_q = \set{\vartheta(\varphi)}{\varphi \in \Phi_q}$.  In
particular, we treat $S^1$ as an additive group which realizes
addition modulo $2\pi$ and denote by $\abs{\vartheta}$ the smallest
non-negative $\varphi$ such that $\vartheta(\varphi) = \vartheta$.
 Further, in the
following, the discretization is always assumed to be compatible with
$\Omega$ and $\Omega'$, i.e., $\Omega$ is contained in the union of
all image pixels $X_{ij}$ and $\Omega'$ is contained in the union of
all sinogram pixels $S_p \times \Theta_q$. All operator norms we consider
in the following relate to operators $L^2(\Omega) \to L^2(\Omega')$.

\begin{definition}
The $L^2$ modulus of continuity of a function $g \in L^2(\RR \times S^1)$ is 

\[
\modcont{g}(h, \gamma) = \Bigl( \int_{S^1} \int_{\RR} \abs{g(s+h, \vartheta
  + \gamma) - g(s,\vartheta)}^2 \dd{s} \dd{\vartheta} \Bigr)^{1/2}.
\]
\end{definition}
The asymptotic behavior %
for vanishing $h$ and $\gamma$ is a measure of regularity: For
instance, for $g \in L^2(\Omega')$, we have that
$g \in H^1_0(\Omega')$ if and only if
$\modcont g(h,\gamma) = \kronO(\abs{h} + \abs{\gamma})$, and
$g \in H^\alpha_0(\Omega')$, $0 < \alpha < 1$, if  $\int_{S^1} \int_\RR (\abs{h}^2 + \abs{\gamma}^2)^{-(\alpha+1)}\modcont g(h,\gamma)^2 \dd{h} \dd{\gamma} < \infty$ (see \cite{10.2307/j.ctt1bpmb07}).

We are interested in the asymptotic behavior of the modulus of continuity for $g = \Radon f$ and $\gamma = 0$ in order to show that the Radon transformation generates  regularity in the offset dimension. 

\begin{lemma}\label{lem:radon_mod_continuity}
  Let $f \in L^2(\Omega)$ and $g = \Radon f$.  Then,
  $\modcont{g}(h, 0) \leq c \sqrt{\abs{h}} \norm{f} $ for every
$h \in \RR$  and some constant $c > 0$ independent of $f$ and  $h$.
\end{lemma}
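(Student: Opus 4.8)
The natural approach is to pass to the Fourier domain via the Fourier slice theorem, since the statement is fundamentally a claim that $\Radon$ maps $L^2(\Omega)$ into a space with half a derivative of Sobolev regularity in the offset variable $s$. Concretely, for fixed $\varphi$, the Fourier transform of $s \mapsto [\Radon f](s,\varphi)$ is $\sigma \mapsto \hat f(\sigma\vartheta(\varphi))$, where $\hat f$ is the two-dimensional Fourier transform of $f$. The translation by $h$ in $s$ becomes multiplication by $\eulerE^{-\mathrm{i}\sigma h}$, so by Plancherel,
\[
\modcont{g}(h,0)^2 = \int_{S^1} \int_{\RR} \bigl| \eulerE^{-\mathrm{i}\sigma h} - 1\bigr|^2 \, \bigl|\hat f(\sigma\vartheta(\varphi))\bigr|^2 \dd{\sigma}\dd{\vartheta}.
\]
I would then use the elementary bound $|\eulerE^{-\mathrm{i}\sigma h}-1|^2 = 4\sin^2(\sigma h/2) \le \min(4, \sigma^2 h^2)$, and more specifically $|\eulerE^{-\mathrm{i}\sigma h}-1|^2 \le C\,|h|\,|\sigma|$ for all $\sigma, h$ (which follows by interpolating the two bounds: $\min(4,\sigma^2h^2) \le 2|\sigma h|$). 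Substituting, $\modcont g(h,0)^2 \le C|h| \int_{S^1}\int_\RR |\sigma|\,|\hat f(\sigma\vartheta(\varphi))|^2 \dd\sigma\dd\vartheta$.

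The remaining step is to recognize the last integral as (a constant times) $\norm{f}_{L^2}^2$. Converting to polar coordinates in the frequency plane, $\int_{S^1}\int_\RR |\sigma|\,|\hat f(\sigma\vartheta)|^2 \dd\sigma\dd\vartheta = 2\int_{\RR^2} |\hat f(\xi)|^2\dd\xi = 2(2\pi)^2\norm{f}^2$ (up to the normalization convention for $\hat f$), using the fact that the Jacobian $|\sigma|$ of the polar substitution exactly cancels the weight $|\sigma|$ that the half-derivative produced. This is the well-known phenomenon that $\Radon$ is bounded $L^2(\Omega) \to H^{1/2}(\RR\times S^1)$ in the offset variable; here we only need the direct estimate, not the full trace characterization. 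This gives $\modcont g(h,0) \le c\sqrt{|h|}\,\norm f$ with $c$ independent of $f$ and $h$.

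The one point requiring a little care — and the main (mild) obstacle — is the interchange of the $S^1$ integral with the Fourier/Plancherel argument and the justification that the polar-coordinate identity holds for all $f \in L^2(\Omega)$ rather than just Schwartz functions. Since $f$ is compactly supported in $\Omega = B(0,1)$, $\hat f$ is smooth (entire, in fact) and the above integrals are finite, so a density argument from $C_c^\infty(\Omega)$ (or directly from the Paley–Wiener smoothness of $\hat f$) closes this gap; one should note that $\Radon f$ is a priori only defined as an $L^2(\Omega')$ function via the continuous extension mentioned after the definition, but the Fourier slice identity extends to it by continuity. An alternative route avoiding Fourier analysis entirely would be to use the Hölder-type estimate $\abs{[\Radon f](s+h,\varphi) - [\Radon f](s,\varphi)} \le \norm{f}_{L^2(\mathrm{strip})}\sqrt{|h|}$ obtained from Cauchy–Schwarz on the symmetric difference of the two lines (the area swept has width $|h|$), then integrating in $s$ and $\varphi$; this is more elementary but the constant and the geometry of overlapping strips make the Fourier proof cleaner, so I would present the Fourier argument as the main one.
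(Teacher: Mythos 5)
Your main argument is correct, and it is genuinely different from the paper's proof. The paper stays entirely in real space: it writes $\modcont g(h,0)^2 \leq 2\norm{\Radon^*\Radon f - \Radon^*T_h\Radon f}\,\norm{f}$, computes the kernel of $\Radon^*T_h\Radon$ explicitly as $k_h(x,y) = (\abs{x-y}^2 - h^2)^{-1/2}$ for $\abs{x-y}\geq\abs{h}$ (and $0$ otherwise), and then estimates $\sup_x \int_\Omega \abs{k_0 - k_h}\dd{y} \leq 4\pi\abs{h}$ by a polar-coordinate computation, concluding via Young's inequality. Your route instead uses the Fourier slice theorem, Plancherel in the offset variable, the bound $\abs{\eulerE^{-\mathrm{i}\sigma h}-1}^2 \leq 2\abs{\sigma h}$, and the cancellation of the weight $\abs{\sigma}$ against the polar Jacobian; the density/Paley--Wiener remark you make to justify the slice identity for general $f \in L^2(\Omega)$ is exactly the right (and sufficient) care. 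What each buys: your argument is shorter, standard, and makes the $H^{1/2}$-smoothing mechanism transparent with an explicit constant; the paper's kernel-based template has the advantage that it transfers with almost no change to the \emph{angular} modulus of continuity (the appendix proof of the $\abs{\gamma\log\abs{\gamma}}$ bound uses the same $\Radon^*T\Radon$ kernel device with $k_\gamma(x,y) = \abs{A_\gamma x - y}^{-1}$), where a Fourier-slice argument is less immediate.

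One caveat on your closing aside: the ``elementary alternative'' via a pointwise bound $\abs{[\Radon f](s+h,\varphi) - [\Radon f](s,\varphi)} \leq \sqrt{\abs{h}}\,\norm{f}_{L^2(\mathrm{strip})}$ does not hold for $f \in L^2(\Omega)$ --- a line integral of an $L^2$ function is not controlled by an area integral (take $f = \epsilon^{-1/2}$ on a band of width $\epsilon$ around the line $x\cdot\vartheta = s$: the left-hand side blows up like $\epsilon^{-1/2}$ while the right-hand side stays bounded). Since you present this only as an aside and rely on the Fourier argument, the proof as proposed stands; just drop that alternative or replace it with a duality/kernel argument of the paper's type.
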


\begin{proof}
  Denote by $T_h$ the translation operator associated
  with $(h,0)$, i.e., for $g \in L^2(\RR \times S^1)$, we have
  $[T_h g](s,\vartheta) = g(s+h, \vartheta)$.
  Then,
  $\norm{T_h g} = \norm{g}$ implying
  $\norm{T_h g - g}^2 = 2\scp{g - T_hg}{g}$ and plugging in
  $g = \Radon f$ gives
  \[
    \modcont g(h,0)^2 \leq 2\norm{\Radon^*\Radon f -
      \Radon^* T_h \Radon f} \norm{f}
  \]
  by virtue of the Cauchy--Schwarz inequality.
  We compute, for $\tilde f \in L^2(\Omega)$ that
  \[
  \begin{aligned}
    \scp{T_h\Radon f}{\Radon\tilde f} &= \int_{S^1}
    \int_{\RR} \int_{\RR} \int_{\RR} f(\vartheta (s + h) +
    \vartheta^\perp t) \tilde{f}(\vartheta s + \vartheta^\perp \tau)
    \dd{t} \dd{\tau} \dd{s}
    \dd{\vartheta} \\
    &= \int_{\Omega} \Bigl( \int_{S^1} \int_{\RR} f(x + \vartheta h +
    \vartheta^\perp t-(x\cdot\vartheta^\perp)  \vartheta^\perp)
    \dd{t} \dd{\vartheta} \Bigr) \tilde f(x) \dd{x} \\
    &= 2 \int_\Omega \Bigl( \int_{\abs{x - y} \geq \abs{h}}
    \frac{1}{\sqrt{\abs{x - y}^2 - h^2}} f(y) \dd{y} \Bigr)
    \tilde f(x) \dd{x},
  \end{aligned}
  \]
  where we substituted $x = \vartheta s + \vartheta^\perp \tau$ for
  $(s, \tau)$ and $y = x + \vartheta h + \vartheta^\perp t-(x\cdot\vartheta^\perp ) \vartheta^\perp$ for
  $(t, \vartheta)$. Denoting by
  \[
  k_h(x,y) =
  \begin{cases}
    0 & \text{if} \ \abs{x - y} < \abs{h},\\
    \frac{1}{\sqrt{\abs{y - x}^2 - h^2}} & \text{if} \ \abs{x -
      y} \geq \abs{h},
  \end{cases}
  \]
  we have that
  $[\Radon^*T_h\Radon f](x) = 2 \int_\Omega k_h(x,y) f(y)
  \dd{y}$, i.e., the operator corresponds to a convolution.
  Due to Young's inequality,   with  
  \begin{multline} M_h = \sup_{x \in \Omega} \int_{\Omega} \abs{k_0(x,y) - k_h(x,y)}
  \dd{y}= \sup_{x\in \Omega} \int_{x-\Omega} \abs{k_0(0,y) - k_h(0,y)}
  \dd{y}
	\\ \leq \int_{|y|\leq 2} \abs{k_0(0,y) - k_h(0,y)}
  \dd{y} 
  ,
  \end{multline}
  we can estimate
  $\norm{(\Radon^*\Radon -\Radon^*T_h\Radon)f}
  \leq 2 M_h \norm{f}$.  For $|h| \leq 2$, $M_h$ can be estimated 
  by changing to polar coordinates as follows:
  \[
  \begin{aligned}
    M_h &\leq \int_{\abs{y} \leq \abs{h}} \frac1{\abs{y}} \dd{y} +
    \int_{\abs{h} \leq \abs{y} \leq 2} \frac{1}{\sqrt{\abs{y}^2 -
        h^2}} - \frac{1}{\abs{y}} \dd{y} \\
    & %
    =2\pi \abs{h} + 2\pi \int_{\abs{h}}^2 \frac{r}{\sqrt{r^2 - h^2}} - 1
    \dd{r} = 2\pi(2\abs{h}  \underbrace{-2 + \sqrt{4 - h^2}}_{\leq 0} %
    ) \leq 4 \pi \abs{h}.
  \end{aligned}
  \]
  If $\abs{h} > 2$, then
  $M_h = \int_{\abs{y} \leq 2} k_0(0,y) \dd{y} = 4\pi \leq
  4\pi\abs{h}$.  Together, we thus get
  $\modcont g(h,0)^2 \leq 16\pi \abs{h} \norm{f}^2$ which proves the
  claim.
\end{proof}

Next, denote by $\RadonS$ the operator
$\RadonHat$ that is additionally discretized with respect to the offset parameter $s$, i.e.,
\begin{equation}
  \label{eq:radon_detector_discretization}
[\RadonS f](s,\vartheta) = \frac1{\delta_s^2} \sum_{p=1}^P
\chi_{S_p}(s) \int_\RR 
w_{\delta_s}(t - s_p) \Radon f(t, \vartheta) \dd{t}.
\end{equation}
We are interested in the norm of the difference of
$\RadonS$ and $\Radon$, i.e., the error of approximating the line integral by the area integral and discretizing the offset.
\begin{lemma}
  \label{lem:radon_detector_discretization}
  For %
  $f \in L^2(\Omega)$, we have
  $\norm{\RadonS f - \Radon f}
  \leq C \sup_{\abs{h} < \frac32 \delta_s} \modcont{\Radon f}(h,0)$.
\end{lemma}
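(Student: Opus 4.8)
The plan is to bound the difference $\RadonS f - \Radon f$ pointwise (in $(s,\vartheta)$) by a quantity controlled by the modulus of continuity of $\Radon f$ in the offset variable, and then integrate. First I would fix an angle $\vartheta$ and write $g_\vartheta(\cdot) = [\Radon f](\cdot,\vartheta)$, so that $[\RadonS f](s,\vartheta) = \frac{1}{\delta_s^2}\sum_p \chi_{S_p}(s)\int_\RR w_{\delta_s}(t - s_p) g_\vartheta(t)\dd t$. The key observation is that $\frac{1}{\delta_s^2} w_{\delta_s}$ is a nonnegative kernel of unit mass supported in $[-\delta_s,\delta_s]$ (indeed $\int_\RR w_{\delta_s} = \delta_s^2$), so $\frac{1}{\delta_s^2}\int_\RR w_{\delta_s}(t-s_p) g_\vartheta(t)\dd t$ is an average of $g_\vartheta$ over translations of size at most $\delta_s$. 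Hence for $s \in S_p$, writing $g_\vartheta(s) = \frac{1}{\delta_s^2}\int_\RR w_{\delta_s}(t - s_p) g_\vartheta(s)\dd t$ and subtracting, one gets
\[
\bigl|[\RadonS f](s,\vartheta) - g_\vartheta(s)\bigr| \le \frac{1}{\delta_s^2}\int_\RR w_{\delta_s}(t - s_p)\, \bigl|g_\vartheta(t) - g_\vartheta(s)\bigr|\dd t,
\]
and since $s \in S_p = s_p + {[-\delta_s/2,\delta_s/2[}$ and $t$ ranges over $s_p + [-\delta_s,\delta_s]$, the increment $|t - s|$ is at most $\tfrac32\delta_s$.

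Next I would square, integrate in $s$ over $S_p$, sum over $p$, and integrate over $S^1$. Using Jensen's (or Cauchy--Schwarz) inequality with the probability measure $\frac{1}{\delta_s^2}w_{\delta_s}(t - s_p)\dd t$,
\[
\bigl|[\RadonS f](s,\vartheta) - g_\vartheta(s)\bigr|^2 \le \frac{1}{\delta_s^2}\int_\RR w_{\delta_s}(t - s_p)\,\bigl|g_\vartheta(t) - g_\vartheta(s)\bigr|^2\dd t.
\]
Substituting $t = s + h$ and using $w_{\delta_s}(t-s_p)\le \delta_s$ turns this into $\frac{1}{\delta_s}\int_{|h|<\frac32\delta_s}|g_\vartheta(s+h) - g_\vartheta(s)|^2\dd h$. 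Then
\[
\norm{\RadonS f - \Radon f}^2 = \int_{S^1}\sum_{p}\int_{S_p}\bigl|[\RadonS f](s,\vartheta) - g_\vartheta(s)\bigr|^2\dd s\,\dd\vartheta \le \frac{1}{\delta_s}\int_{|h|<\frac32\delta_s}\int_{S^1}\int_\RR |g_\vartheta(s+h) - g_\vartheta(s)|^2\dd s\,\dd\vartheta\,\dd h,
\]
where I swapped the order of integration (Tonelli) and extended the $s$-integral to all of $\RR$ (the zero extension outside $\Omega'$ is harmless). The inner double integral is exactly $\modcont{\Radon f}(h,0)^2$, so the bound becomes $\frac{1}{\delta_s}\int_{|h|<\frac32\delta_s}\modcont{\Radon f}(h,0)^2\dd h \le 3\sup_{|h|<\frac32\delta_s}\modcont{\Radon f}(h,0)^2$, giving the claim with $C = \sqrt 3$ (or any absolute constant).

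One technical point to handle carefully is the discretization compatibility: $\chi_{S_p}(s)$ for $p=1,\dots,P$ partitions the offset range covering $\Omega'$, and for $s$ outside this range both $[\RadonS f](s,\vartheta)$ and $[\Radon f](s,\vartheta) = g_\vartheta(s)$ vanish (the latter because $f$ is supported in $\Omega$ and the line misses $\Omega$; the former by the zero-extension convention and the support assumption), so the integral over $s$ genuinely reduces to $\sum_p\int_{S_p}$. I also need the near-boundary detectors to cause no trouble: since $\Omega'\subset\bigcup_p S_p\times\Theta_q$ is assumed, the kernel mass $\frac{1}{\delta_s^2}w_{\delta_s}(\cdot - s_p)$ may reach slightly outside $[-1,1]$ in $t$, but $g_\vartheta$ is defined on all of $\RR$ by zero extension and this only helps the estimate. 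The main (mild) obstacle is thus bookkeeping: making sure the pointwise average-minus-value estimate is applied with the correct increment range $|t-s| < \tfrac32\delta_s$ and that Tonelli applies to justify the interchange of the $h$, $s$, $\vartheta$ integrations; there is no deep difficulty beyond that.
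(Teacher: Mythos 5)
Your argument is correct and follows essentially the same route as the paper: expressing the difference via the unit-mass kernel $\frac{1}{\delta_s^2}w_{\delta_s}(\cdot - s_p)$, applying Jensen's inequality, exploiting that only increments $\abs{t-s} < \tfrac32\delta_s$ contribute, and substituting $h = t-s$ to bound by $\frac1{\delta_s}\int_{\abs{h}<\frac32\delta_s}\modcont{\Radon f}(h,0)^2\dd{h} \leq 3\sup_{\abs{h}<\frac32\delta_s}\modcont{\Radon f}(h,0)^2$. The only cosmetic difference is that you use the pointwise bound $w_{\delta_s}\leq\delta_s$ where the paper bounds the sum $\sum_p \chi_{S_p}(s)w_{\delta_s}(t-s_p)/\delta_s$ by $1$, which amounts to the same estimate.
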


\begin{proof}
  For $f \in L^2(\Omega)$ and $(s,\vartheta) \in \Omega'$ we compute
  \[
  [\RadonS f](s,\vartheta) -
  [\Radon f](s,\vartheta) = \frac1{\delta_s^2}\sum_{p=1}^P
  \chi_{S_p}(s)
  \int_{\RR}
  w_{\delta_s}(t - s_p) \bigl( [\Radon f](t,\vartheta) -
  [\Radon f](s,\vartheta) \bigr) \dd{t}
  \]
  since
  $\frac1{\delta_s^2}\int_\RR w_{\delta_s}(t) \dd{t} = 1$, and with Jensen's   inequality we get
  \begin{multline*}
    \norm{\RadonS f -
      \Radon f}^2 \\
    \leq \frac1{\delta_s}
    \int_{S^1} \int_\RR \int_\RR \Bigl[ \sum_{p=1}^P 
    \chi_{S_p}(s)
    \frac{w_{\delta_s}(t - s_p)}{\delta_s} %
    \Bigr]
    \bigabs{[\Radon f](t,\vartheta) -
      [\Radon f](s,\vartheta)}^2 \dd{t} \dd{s} \dd{\vartheta}.
  \end{multline*}
  If $\abs{t - s} \geq \frac32 \delta_s$, then
  $s \in S_p$
  and $\abs{t - s_p} < \delta_s$ cannot
  hold at the same time, so these $(s,t)$ do not contribute to the
  integral on the right-hand side. If
  $\abs{t - s} < \frac32 \delta_s$, there is at most one $p$ for
  which %
  $s \in S_p$,
  such that the sum over $p$ can
  be estimated by $1$. Hence, substituting $h = t - s$
  leads %
  to the desired estimate:
  \begin{align}  \label{equ:Proof_lastequation_2.2}
    \norm{\RadonS f - \Radon f}^2 &\leq
    \frac1{\delta_s} \int_{S^1} \int_\RR \int_{\abs{t - s} < \frac32
      \delta_s} \bigabs{[\Radon f](t,\vartheta) -
      [\Radon f](s,\vartheta)}^2
    \dd{t} \dd{s} \dd{\vartheta}     
    \\    
                                  &= \frac1{\delta_s} \int_{\abs{h} < \frac32 \delta_s} %
                                    \int_{S^1}
    \int_{\RR} \bigabs{[\Radon f](s+h,\vartheta) -
      [\Radon f](s,\vartheta)}^2
                                    \dd{s} \dd{\vartheta}%
                                    \dd{h} \notag
    \\
    & = \frac1{\delta_s} \int_{\abs{h} < \frac32 \delta_s}  \modcont{\Radon f}(h,0)^2 \dd{h} \leq
      3 \sup_{\abs{h} < \frac32 \delta_s} \modcont {\Radon f}(h,0)^2.
      \notag
  \end{align}
\end{proof}
The previous lemma combined with \cref{lem:radon_mod_continuity} implies
that at least,
$\norm{\RadonS - \Radon} = \kronO(\delta_s^{1/2})$, but depending
on the regularity of $\Radon f$ in terms of the modulus of continuity,
also higher rates may be achieved for specific $f$. The following
lemma shows that the modulus of continuity can also be used to
estimate the approximation error between the adjoints of $\RadonS$ and
$\Radon$, respectively.

\begin{lemma} \label{lem:radon_detector_discretization_Backprojection}
The adjoint of $\RadonS$ is 
\begin{equation}
[(\RadonS)^*g](x)=\frac{1}{\delta_s^2}\sum_{p=1}^P \int_{S^1} w_{\delta_s}(x\cdot \vartheta -s_p) \int_{S_p}g(s,\vartheta) \dd{s} \dd{\vartheta},
\end{equation}
and the approximation error for the adjoint for $g \in L^2(\Omega')$
can be estimated by
\begin{equation}
\norm{(\RadonS)^* g - \Radon^* g}\leq C \sup_{\abs{h} < \frac32 \delta_s} \modcont g(h,0).
\end{equation}
\end{lemma}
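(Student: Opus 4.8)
The plan is to follow the structure of the proof of \Cref{lem:radon_detector_discretization} and then transfer the resulting estimate to the adjoint. For the adjoint formula, write $\RadonS = A_{\delta_s} \circ \Radon$, where $A_{\delta_s}$ denotes the offset-averaging operator on $L^2(\RR \times S^1)$ given by $[A_{\delta_s} g](s,\vartheta) = \frac1{\delta_s^2} \sum_{p=1}^P \chi_{S_p}(s) \int_\RR w_{\delta_s}(t-s_p) g(t,\vartheta) \dd{t}$, cf.~\eqref{eq:radon_detector_discretization}. Computing $\scp{A_{\delta_s} g}{v}$ and interchanging the $s$- and $t$-integrals by Fubini yields $[A_{\delta_s}^* v](t,\vartheta) = \frac1{\delta_s^2} \sum_p w_{\delta_s}(t-s_p) \int_{S_p} v(s,\vartheta) \dd{s}$; since $\Radon$ and $\Radon^*$ are adjoint, $(\RadonS)^* = \Radon^* A_{\delta_s}^*$, and inserting the definition~\eqref{eq:def_backprojection} of $\Radon^*$ reproduces exactly the stated expression.

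For the error bound, set $\tilde g := A_{\delta_s}^* g$, so that $(\RadonS)^* g - \Radon^* g = \Radon^*(\tilde g - g)$. Since $\Radon^*$ is bounded from $L^2(\Omega')$ to $L^2(\Omega)$ and, for $x \in \Omega$, $[\Radon^* v](x) = \int_{S^1} v(x\cdot\vartheta,\vartheta)\dd{\vartheta}$ depends only on the restriction of $v$ to $\Omega'$ (because $\abs{x\cdot\vartheta} < 1$), it suffices to prove $\norm{\tilde g - g}_{L^2(\Omega')} \leq C' \sup_{\abs h < \frac32\delta_s}\modcont g(h,0)$ for a $C'$ depending only on absolute constants; the lemma then follows with $C = \norm{\Radon^*} C'$.

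To estimate $\tilde g - g$, observe that $\tilde g(t,\vartheta) = \sum_p \frac{w_{\delta_s}(t-s_p)}{\delta_s}\cdot\frac1{\delta_s}\int_{S_p} g(s,\vartheta)\dd{s}$ is a weighted average of cell means of $g(\cdot,\vartheta)$ over offsets $s$ with $\abs{s-t} < \frac32\delta_s$, of total weight $\sigma(t) = \frac1{\delta_s}\sum_p w_{\delta_s}(t-s_p) \in [0,1]$, which equals $1$ for $t \in [s_1, s_P]$. Writing $\tilde g - g = (\tilde g - \sigma g) + (\sigma - 1)g$, the first term is treated by the elementary inequality $\bigabs{\sum_p a_p b_p}^2 \leq \sum_p a_p b_p^2$ (valid for $a_p \geq 0$ with $\sum_p a_p \leq 1$), applied once over $p$ with $a_p = w_{\delta_s}(t-s_p)/\delta_s$ and once to the cell averages over $S_p$; together with Fubini and the substitution $h = s-t$ this gives $\norm{\tilde g - \sigma g}^2 \leq \frac1{\delta_s}\int_{\abs h < \frac32\delta_s}\modcont g(h,0)^2\dd{h} \leq 3\sup_{\abs h < \frac32\delta_s}\modcont g(h,0)^2$, which is the very same computation as in~\eqref{equ:Proof_lastequation_2.2}. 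The second term $(\sigma - 1)g$ vanishes on $[s_1,s_P]\times S^1$, so within $\Omega'$ it is supported in the two boundary strips, which by the compatibility assumption have width at most $\delta_s$ in the offset variable; since $g$ vanishes outside $\Omega'$, on each strip $g$ coincides with the difference between $g$ and a suitable $\delta_s$-translate of $g$ that is identically zero there, so that the $L^2$-mass of $g$ --- and hence, as $\abs{\sigma - 1}\leq 1$, of $(\sigma-1)g$ --- on each strip is bounded by $\sup_{\abs h \leq \delta_s}\modcont g(h,0) \leq \sup_{\abs h < \frac32\delta_s}\modcont g(h,0)$. Summing the two contributions bounds $\norm{\tilde g - g}_{L^2(\Omega')}$ as required.

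The adjoint identity is routine. The core estimate is essentially a transcription of the proof of \Cref{lem:radon_detector_discretization}, reorganized around the factorization $(\RadonS)^* - \Radon^* = \Radon^*\circ(A_{\delta_s}^* - \id)$. The one genuinely new feature --- absent in \Cref{lem:radon_detector_discretization}, where the offset weights $\chi_{S_p}$ tile exactly and the $t$-integral is already normalized to $1$ --- is the degeneracy of the hat-shaped partition of unity near the ends of the detector array, which produces the term $(\sigma - 1)g$. I expect the main, and only minor, obstacle to be handling this boundary strip cleanly and checking that no constant deteriorates as $\delta_s \to 0$, together with keeping track that $\Radon^*$ only acts on the restriction of $\tilde g - g$ to $\Omega'$.
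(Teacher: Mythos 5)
Your proof is correct, but it takes a genuinely different route from the paper's. The paper estimates $\norm{(\RadonS)^*g-\Radon^*g}$ directly: it inserts the normalizations $\frac1{\delta_s}\sum_{p} w_{\delta_s}(x\cdot\vartheta-s_p)=1$ and $\frac1{\delta_s}\int_{S_p}1\dd{s}=1$ for $x\in\Omega$, applies Cauchy--Schwarz in $\vartheta$ and Jensen over $p$ and the cell average, and then changes variables $x=s\vartheta+t\vartheta^\perp$ --- in effect re-running, inside the backprojection geometry, the same computation that shows $\Radon^*$ is bounded, ending with the explicit constant $36\pi$. You instead factor $(\RadonS)^*-\Radon^*=\Radon^*\circ(A_{\delta_s}^*-\mathrm{id})$, invoke boundedness of $\Radon^*\colon L^2(\Omega')\to L^2(\Omega)$ (with the correct observation that only the restriction to $\Omega'$ matters for $x\in\Omega$, and implicitly that $\Radon f$ is supported in $\abs{s}<1$ so the factorization of the adjoint is legitimate), and reduce everything to a one-dimensional averaging-error estimate in the offset variable, which reuses the computation of \cref{lem:radon_detector_discretization}, i.e.~\eqref{equ:Proof_lastequation_2.2}, verbatim. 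What your route buys: it is modular (the same argument covers any bounded operator post-composed with the offset averaging), and the splitting $\tilde g-g=(\tilde g-\sigma g)+(\sigma-1)g$ treats honestly the fact that the hat functions form an exact partition of unity only on $[s_1,s_P]$; the paper simply asserts $\frac1{\delta_s}\sum_p w_{\delta_s}(x\cdot\vartheta-s_p)=1$ on all of $\Omega$, which with, e.g., $\delta_s=2/P$ holds only up to a layer of width $\delta_s/2$ at the ends of the detector array, and your translation trick bounding the $L^2$-mass of $g$ on those strips by $\modcont g(\pm\delta_s,0)$ repairs this at no cost to the rate. What the paper's route buys: it is self-contained (no appeal to $\norm{\Radon^*}$ as an external constant) and yields a fully explicit constant. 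Both arguments give the claimed bound $C\sup_{\abs{h}<\frac32\delta_s}\modcont g(h,0)$ with $C$ independent of $\delta_s$ and $g$.
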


\begin{proof}
The representation of the adjoint is readily computed. 
Inserting the definitions of the occurring operators  and putting the $g(x\cdot \vartheta,\vartheta)$ term into the inner-most sum and integral yields
\begin{multline*}
\norm{(\RadonS)^*g-\Radon^* g}^2 
\\
  = \int_{\Omega} \Bigabs{ \int_{S^1} \sum_{p=1}^P
    \frac{w_{\delta_s}(x \cdot \vartheta - s_p)}{\delta_s}
    \Bigl( \frac{1}{\delta_s}\int_{S_p} g(s,\vartheta)
    -g(x\cdot \vartheta, \vartheta) \dd{s} \Bigr) \dd{\vartheta} }^2 \dd{x},
\end{multline*}
where we exploited that for $x \in \Omega$, we have
$\frac{1}{\delta_s}\sum_{p=1}^P w_{\delta_s}(x \cdot
\vartheta(\varphi)-s_p)=1$ and $\frac{1}{\delta_s}\int_{S_p} 1 \dd{s}=1$.
Applying the Cauchy--Schwarz inequality as well as Jensen's
inequality, the fact that
$w_{\delta_s}(x \cdot \vartheta-s_p)\neq 0$ and $s\in S_p$
implies $|x\cdot \vartheta-s| < \frac{3}{2} \delta_s$, as
well as the change of variables $h = s - x\cdot \vartheta $
gives
\begin{align*}
\|(\RadonS)^*g - &\Radon^* g\|^2
\\
& \leq 2\pi \int_{\Omega}   \int_{S^1}\sum_{p=1}^P \frac{w_{\delta_s}(x \cdot \vartheta-s_p)}{\delta_s} \Bigabs{\frac{1}{\delta_s}\int_{S_p} g(s,\varphi)
 -g(x\cdot \vartheta, \vartheta) \dd{s}}^2 \dd{\vartheta}  \dd{x}
\\
&\leq 2\pi  \int_{\Omega}   \int_{S^1} \Bigabs{\frac{1}{\delta_s}\int_{|h| < \frac{3}{2}\delta_s} |g(x\cdot \vartheta + h,\vartheta)
 -g(x\cdot \vartheta, \vartheta) | \dd{h} }^2 \dd{\vartheta}  \dd{x}.
\end{align*}
Interchanging the order of integration, substituting
$x = s \inprod \vartheta + t \inprod \vartheta^\perp$, interchanging
integration order once again, and applying the Cauchy--Schwarz
inequality finally implies
\begin{align*}
\norm{(\RadonS)^*g-\Radon^* g}^2
&\leq 2\pi  \int_{{]{-1,1}[}} \int_{S^1} \int_{\mathbb{R}} \Bigabs{\frac{1}{\delta_s}\int_{|h|\leq \frac{3}{2}\delta_s} |g(s+h,\vartheta)
 -g(s,\varphi)| \dd{h} }^2 \dd{s} \dd{\vartheta} \dd{t}
\\&
\leq \frac{12\pi}{\delta_s}  \int_{|h| < \frac{3}{2}\delta_s}   \int_{S^1}  \int_{\mathbb{R}}   \abs{ g(s+h,\vartheta)
 -g(\tau,\vartheta) }^2  \dd{s} \dd{\vartheta} \dd{h} 
 \\
  &= \frac{12\pi}{\delta_s}  \int_{|h| < \frac{3}{2}\delta_s} \modcont{g}(h,0)^2 \dd{h} \leq  36\pi
    \sup_{|h| < \frac{3}{2}\delta_s}\modcont g(h,0) ^2.
\end{align*}
\end{proof}

Next, we estimate the difference between $\RadonS$ and
the operator that also discretizes the angle variable $\vartheta$:
\begin{equation}
    \label{eq:radon_detector_angle_discretization}
  [\RadonPhi f](s, \vartheta) = \frac1{\delta_s^2}
  \sum_{q = 1}^Q \sum_{p=1}^P \chi_{S_p}(s) \chi_{\Theta_q}(\vartheta)
  \int_\Omega w_{\delta_s}(x \inprod \vartheta_q - s_p) f(x) \dd{x}.
\end{equation}

\begin{lemma}
  \label{lem:radon_detector_angular_discretization}
  We have that
  $\norm{\RadonPhi - \RadonS} \leq C \frac {\delta_\varphi}{\delta_s}$.
\end{lemma}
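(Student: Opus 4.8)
The plan is to write $\RadonPhi - \RadonS$ as a single integral operator in the image variable $x$ whose kernel is small because $w_{\delta_s}$ is Lipschitz, and then to bound its operator norm by a Schur test. First I would put $\RadonS$ into the ``area-integral'' form $[\RadonS f](s,\vartheta) = \delta_s^{-2}\sum_p \chi_{S_p}(s)\int_\Omega w_{\delta_s}(x\cdot\vartheta - s_p) f(x)\dd x$ by undoing the $t$-integration in \cref{eq:radon_detector_discretization} via the substitution $x = t\vartheta + \tau\vartheta^\perp$, and insert the partition of unity $\sum_q\chi_{\Theta_q}(\vartheta) = 1$, so that the only difference with \cref{eq:radon_detector_angle_discretization} is that the hat weight is evaluated at the true angle $\vartheta$ instead of the cell representative $\vartheta_q$. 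Writing $p(s)$ and $q(\vartheta)$ for the (almost everywhere unique) indices with $s\in S_{p(s)}$ and $\vartheta\in\Theta_{q(\vartheta)}$, this gives $[(\RadonPhi - \RadonS)f](s,\vartheta) = \int_\Omega K_{s,\vartheta}(x) f(x)\dd x$ with kernel $K_{s,\vartheta}(x) = \delta_s^{-2}\bigl(w_{\delta_s}(x\cdot\vartheta_{q(\vartheta)} - s_{p(s)}) - w_{\delta_s}(x\cdot\vartheta - s_{p(s)})\bigr)$.

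The key pointwise estimate uses that $w_{\delta_s}$ is $1$-Lipschitz together with $\abs{\vartheta - \vartheta_{q(\vartheta)}} = 2\abs{\sin((\varphi - \varphi_q)/2)} \le \abs{\varphi - \varphi_q} \le \delta_\varphi/2$ for $\varphi\in\Phi_q$ and $\abs{x}\le 1$ on $\Omega$, which yields $\abs{K_{s,\vartheta}(x)} \le \delta_\varphi/(2\delta_s^2)$; moreover $K_{s,\vartheta}(x) = 0$ unless $\abs{x\cdot\vartheta - s_{p(s)}} < \delta_s + \delta_\varphi/2$, since otherwise both hat functions vanish. From the latter one reads off that, for fixed $(s,\vartheta)$, the set $\set{x\in\Omega}{K_{s,\vartheta}(x)\ne 0}$ is contained in a strip of width $2\delta_s + \delta_\varphi$ across the unit disc, so it has area at most $4\delta_s + 2\delta_\varphi$; and that, for fixed $x$ and $\vartheta$, the set of $s$ with $K_{s,\vartheta}(x)\ne 0$ satisfies $2\abs{x\cdot\vartheta - s} < 3\delta_s + \delta_\varphi$ (using $\abs{s - s_{p(s)}} \le \delta_s/2$), hence has measure at most $3\delta_s + \delta_\varphi$.

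Then I would invoke the Schur test (two applications of the Cauchy--Schwarz inequality together with Fubini's theorem): $\norm{\RadonPhi - \RadonS} \le \sqrt{A B}$, where $A = \sup_{s,\vartheta}\int_\Omega \abs{K_{s,\vartheta}(x)}\dd x$ and $B = \sup_x\int_{S^1}\int_\RR \abs{K_{s,\vartheta}(x)}\dd s\dd\vartheta$. The estimates above give $A \le \frac{\delta_\varphi}{2\delta_s^2}(4\delta_s + 2\delta_\varphi)$ and, after additionally integrating over all of $\vartheta\in S^1$ (total mass $2\pi$), $B \le \frac{2\pi\delta_\varphi}{2\delta_s^2}(3\delta_s + \delta_\varphi)$, hence $\norm{\RadonPhi - \RadonS}^2 \le \pi\,\delta_\varphi^2(2\delta_s + \delta_\varphi)(3\delta_s + \delta_\varphi)\,\delta_s^{-4}$.

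Finally, to obtain the stated $\kronO(\delta_\varphi/\delta_s)$ rate I would distinguish two regimes. If $\delta_\varphi \le \delta_s$, then $(2\delta_s + \delta_\varphi)(3\delta_s + \delta_\varphi) \le 12\delta_s^2$ and the bound collapses to $\norm{\RadonPhi - \RadonS} \le \sqrt{12\pi}\,\delta_\varphi/\delta_s$. If instead $\delta_\varphi > \delta_s$, the claim is trivial, since $\RadonS$ and $\RadonPhi$ are each bounded by an absolute constant --- for $\RadonS$ by \cref{lem:radon_detector_discretization} together with \cref{lem:radon_mod_continuity}, and for $\RadonPhi$ by the same kind of Schur test applied to its own kernel $\delta_s^{-2}w_{\delta_s}(x\cdot\vartheta_{q(\vartheta)} - s_{p(s)})$ --- so that $\norm{\RadonPhi - \RadonS} \le C \le C\,\delta_\varphi/\delta_s$. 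I expect the main obstacle to be purely a matter of bookkeeping: checking that the strip-area and the $s$-measure bounds are uniform in $p(s)$, and correctly carrying the factor $2\pi$ from the unrestricted angular integration --- it is precisely this unrestricted integration in $\vartheta$ that makes a Schur test, rather than a pointwise kernel bound alone, necessary.
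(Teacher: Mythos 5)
Your proposal is correct, and its core ingredients are the same as in the paper's proof: the identical kernel representation of $\RadonPhi - \RadonS$, the $1$-Lipschitz property of $w_{\delta_s}$, and the fact that the kernel is supported on a strip of width about $2\delta_s$ across the unit disc. The bookkeeping, however, differs in an instructive way. You bound the angular deviation pointwise by $\abs{\vartheta - \vartheta_{q(\vartheta)}} \le \delta_\varphi/2$ at the outset and then run a symmetric Schur test in $(s,\vartheta)$ versus $x$; the paper instead fixes $s_p$, applies Cauchy--Schwarz in $x$ while keeping the factor $\abs{\vartheta - \vartheta_q}$, and only afterwards integrates $\abs{\vartheta - \vartheta_q}^2$ over each cell $\Theta_q$, which gives $\norm{\RadonPhi - \RadonS}^2 \le \tfrac{2\pi}{3}\,\delta_\varphi^2/\delta_s^2$ in one stroke. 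The price of your early pointwise bound is that the support strip widens to $2\delta_s + \delta_\varphi$ (and the $s$-support to $3\delta_s + \delta_\varphi$), so your bound $\pi\delta_\varphi^2(2\delta_s+\delta_\varphi)(3\delta_s+\delta_\varphi)/\delta_s^4$ has the claimed order only when $\delta_\varphi \le \delta_s$, forcing the two-regime argument; the paper needs no case distinction because its first factor, $\int_\Omega \abs{w_{\delta_s}(x\inprod\vartheta_q - s_p) - w_{\delta_s}(x\inprod\vartheta - s_p)}\dd{x} \le 4\delta_s \abs{\vartheta - \vartheta_q}$, is obtained by integrating the derivative along the rotation, where for each intermediate angle the derivative is supported on a strip of width exactly $2\delta_s$. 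Your trivial regime $\delta_\varphi > \delta_s$ is handled correctly: $\norm{\RadonPhi}$ is bounded by an absolute constant via the Schur test you indicate, and for $\norm{\RadonS}$ one can avoid any hidden restriction on $\delta_s$ by simply using $\modcont{\Radon f}(h,0) \le 2\norm{\Radon f}$ in \cref{lem:radon_detector_discretization} rather than the $\sqrt{\delta_s}$ rate from \cref{lem:radon_mod_continuity}; the net effect of your route is thus only a somewhat larger constant.
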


\begin{proof}
  Let $f \in L^2(\Omega)$ and fix $p \in \sett{1, \ldots, P}$.
  Via the Cauchy--Schwarz inequality, we obtain
  \[
    \begin{aligned}
	\norm{ [\RadonPhi f-\RadonS f] (s_p,\cdot)}^2_{L^2(S^1)}   =   
      \int_{S^1} |[\RadonPhi f](s_p, \vartheta)
      - [\RadonS f](s_p, \vartheta)|^2 \dd{\vartheta} \\
      \leq \frac1{\delta_s^4} \int_{S^1} \sum_{q=1}^Q
      \chi_{\Theta_q}(\vartheta) \int_{\Omega} \abs{w_{\delta_s}(x
        \inprod \vartheta_q - s_p) - w_{\delta_s} (x \inprod \vartheta
        - s_p)} \dd{x} 
      \\
       \qquad \cdot
      \int_{\Omega} \abs{w_{\delta_s}(y \inprod
        \vartheta_q - s_p) - w_{\delta_s} (y \inprod \vartheta - s_p)}
      \abs{f(y)}^2 \dd{y} \dd{\vartheta}.%
    \end{aligned}
  \]
  Fix $q \in \sett{1,\ldots,Q}$, $\vartheta \in S^1$ and choose
  $\varphi$ as the smallest $\varphi \geq \varphi_q$ such that
  $\vartheta(\varphi) = \vartheta$. With
  $\xi(t) = \vartheta(\varphi_q + t)$ and denoting
  by $w'_{\delta_s}$ the weak derivative of $w_{\delta_s}$,
  we can estimate the integral with respect to $x$ as follows:
  \[
    \begin{aligned}
      \int_{\Omega} |w_{\delta_s}(x \inprod \vartheta_q - s_p) &-
      w_{\delta_s} (x \inprod \vartheta - s_p)| \dd{x}
      \\
      &\leq
      \int_{0}^{\abs{\vartheta - \vartheta_q}}
      \int_\Omega \abs{w_{\delta_s}'(x \inprod \xi(t)
        - s_p)} \abs{x \inprod 
      \xi(t)^\perp} \dd{x} \dd{t} \\
      & \leq 4 \delta_s \abs{\vartheta - \vartheta_q}
    \end{aligned}
  \]
  since for $\xi \in S^1$, the function
  $x \mapsto w_{\delta_s}'(x \inprod \xi - s)$ is supported on a
  stripe of width $2\delta_s$ within the unit ball $\Omega$. 
  Using that
  $\abs{w_{\delta_s}(x \inprod \vartheta_q - s_p) - w_{\delta_s}(x
    \inprod \vartheta - s_p)} \leq \abs{\vartheta - \vartheta_q}$,
  this leads to the $L^2(S^1)$-norm estimate
  \begin{multline*}
    \norm{[\RadonPhi f](s_p, \cdot) -
      [\RadonS f](s_p, \cdot)}^2_{L^2(S^1)} \\
    \leq \frac4{\delta_s^3} \Bigl(\sum_{q=1}^Q \int_{S^1} \chi_{\Theta_q}(\vartheta)
    \abs{\vartheta - \vartheta_q}^2 \dd{\vartheta} \Bigr) \int_\Omega
    \abs{f(y)}^2 \dd{y}.
  \end{multline*}
  Recalling the definition of $\Theta_q$ in terms of $\varphi_{q-1},\varphi_q$
  and $\varphi_{q+1}$, the sum with respect to $q$ can be estimated by
  \[
    \begin{aligned}
      \sum_{q=1}^Q \int_{S^1} \chi_{\Theta_q}(\vartheta) 
      \abs{\vartheta - \vartheta_q}^2 \dd{\vartheta}
      &= \frac1{24} \sum_{q=1}^Q (\varphi_{q} - \varphi_{q-1})^3 
      + (\varphi_{q+1} - \varphi_q)^3 \\
      & \leq \frac{\delta_\varphi^2}{24} \sum_{q=1}^Q (\varphi_q - \varphi_{q-1}) + (\varphi_{q+1} - \varphi_q) 
      = \frac{\pi}{6} (\delta_\varphi)^2.
    \end{aligned} 
  \]
  In total, we have
  $\norm{[\RadonPhi f](s_p, \cdot) -
   [ \RadonS f](s_p, \cdot)}^2 %
  \leq \frac{2\pi}{3}
  \delta_\varphi^2/\delta_s^3 \norm{f}^2$ which leads to the desired 
  $L^2(\Omega')$-estimate as follows:
  \[
    \norm{\RadonPhi f - \RadonS f}^2%
    = \delta_s \sum_{p=1}^P \norm{[\RadonPhi f](s_p,
    \cdot) - [\RadonS f](s_p, \cdot)}_{L^2(S^1)}^2 \leq
  \frac{2\pi}{3} \frac{\delta_\varphi^2}{\delta_s^2} \norm{f}^2.
  \]
\end{proof}
Finally, we replace $f$ by $f_{\delta_x}$, consider 
$\RadonApp f = \RadonPhi f_{\delta_x}$ which results in
\begin{equation}
  \label{eq:radon_detector_angle_space_discretization}
 [\RadonApp f] (s, \varphi) = \frac1{\delta_s^2}
  \sum_{q=1}^Q \sum_{p=1}^P \chi_{S_p}(s) \chi_{\Theta_q}(\vartheta)
  \sum_{i,j=1}^{N,M} w_{\delta_s}(x_{ij} \inprod \vartheta_q - s_p) \int_{X_{ij}}
  f(x) \dd{x}
\end{equation}
and compare it with $\RadonPhi f$. 

\begin{lemma}
  \label{lem:radon_image_discretization}
  It holds that $\norm{\RadonApp
    - \RadonPhi} \leq C \sqrt{1 + \frac {\delta_x}{\delta_s}} 
  \frac {\delta_x}{\delta_s}$.
\end{lemma}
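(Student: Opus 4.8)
The plan is to follow the pattern of \Cref{lem:radon_detector_angular_discretization}: reduce to a per-detector estimate, apply the Cauchy--Schwarz inequality, and control the arising sums by the locality of the hat $w_{\delta_s}$. Since $\RadonApp$ and $\RadonPhi$ carry the very same piecewise-constant structure in $(s,\vartheta)$ and differ only in that the hat weight is evaluated at the pixel center $x_{ij}$ rather than at the running point $x$, one has, for $f\in L^2(\Omega)$,
\[
 [(\RadonApp - \RadonPhi)f](s,\vartheta) = \frac1{\delta_s^2} \sum_{q,p} \chi_{S_p}(s)\, \chi_{\Theta_q}(\vartheta) \sum_{i,j} \int_{X_{ij}} e_{ij}^{q,p}(x)\, f(x) \dd{x},
\]
where $e_{ij}^{q,p}(x) = w_{\delta_s}(x_{ij}\inprod\vartheta_q - s_p) - w_{\delta_s}(x\inprod\vartheta_q - s_p)$, using $\Omega \subset \bigcup_{i,j} X_{ij}$ together with $\supp f \subset \Omega$ to split $\int_\Omega$ into a sum over pixels. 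The decisive observation is the pointwise bound
\[
 \abs{e_{ij}^{q,p}(x)} \le \min\bigl( \abs{(x - x_{ij})\inprod \vartheta_q},\, \delta_s \bigr) \le \min\bigl( \tfrac{\delta_x}{\sqrt 2},\, \delta_s \bigr) =: \rho,
\]
which combines the $1$-Lipschitz continuity of $w_{\delta_s}$ with the elementary bound $0\le w_{\delta_s}\le \delta_s$; in particular $\norm{e_{ij}^{q,p}}_{L^2(X_{ij})}^2 \le \delta_x^2 \rho^2$. It is exactly the second part $\abs{e_{ij}^{q,p}}\le \delta_s$ of this minimum that keeps the estimate alive in the regime $\delta_x \gg \delta_s$.

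Since the difference is piecewise constant on the sinogram pixels, $\norm{(\RadonApp - \RadonPhi)f}^2 = \delta_s \sum_p \norm{[(\RadonApp - \RadonPhi)f](s_p, \placeholder)}_{L^2(S^1)}^2$ as in the proof of \Cref{lem:radon_detector_angular_discretization}, and for each $p$,
\[
 \norm{[(\RadonApp - \RadonPhi)f](s_p,\placeholder)}_{L^2(S^1)}^2 = \frac1{\delta_s^4} \sum_q \Delta_q \Bigabs{ \sum_{i,j} \scp[L^2(X_{ij})]{e_{ij}^{q,p}}{f} }^2.
\]
Next I would bound the inner sum by the Cauchy--Schwarz inequality, first in $L^2(X_{ij})$ and then over the pixels $X_{ij}$ that meet $\Omega$ and carry $e_{ij}^{q,p}\not\equiv 0$, obtaining $\bigabs{\sum_{i,j}\scp[L^2(X_{ij})]{e_{ij}^{q,p}}{f}}^2 \le \bigl( \sum_{i,j} \norm{e_{ij}^{q,p}}_{L^2(X_{ij})}^2 \bigr) \norm{f}_{L^2(\Omega_{q,p})}^2$, where $\Omega_{q,p}$ is the union of those pixels. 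Two elementary counting facts then take over. First, for fixed $(q,p)$ the weight $e_{ij}^{q,p}$ vanishes unless $\abs{x_{ij}\inprod\vartheta_q - s_p} < \delta_s + \delta_x/\sqrt 2$; the pixels meeting $\Omega$ with this property lie in a strip of width $2\delta_s + \delta_x\sqrt 2$ inside a ball of radius $1+\delta_x$, and hence number at most $c(\delta_s+\delta_x)/\delta_x^2$, so that $\sum_{i,j}\norm{e_{ij}^{q,p}}_{L^2(X_{ij})}^2 \le c(\delta_s+\delta_x)\rho^2$. Second, for fixed $(i,j)$ and $q$ the same support condition confines $s_p$ to an interval of length $2\delta_s + \delta_x\sqrt 2$ of the $\delta_s$-grid, so $e_{ij}^{q,p}\not\equiv 0$ for at most $c(1 + \delta_x/\delta_s)$ values of $p$.

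Feeding the first fact into the per-detector identity, summing over $p$ and $q$, interchanging summation order, and using the second fact together with $\sum_q \Delta_q = 2\pi$ to bound $\sum_{p,q}\Delta_q \chi_{\Omega_{q,p}}(x) \le 2\pi c(1 + \delta_x/\delta_s)$ for a.e.\ $x$, I arrive at
\[
 \norm{(\RadonApp - \RadonPhi)f}^2 \le \frac{2\pi c^2 (\delta_s + \delta_x)^2 \rho^2}{\delta_s^4}\, \norm{f}^2.
\]
It remains to observe that $(\delta_s+\delta_x)\rho^2 = (\delta_s+\delta_x)\min\bigl(\tfrac{\delta_x^2}2, \delta_s^2\bigr) \le 2\delta_s\delta_x^2$ — immediate on distinguishing $\delta_x\le\delta_s$ from $\delta_x>\delta_s$ — which turns the right-hand side into $4\pi c^2 (1+\delta_x/\delta_s)\tfrac{\delta_x^2}{\delta_s^2}\norm{f}^2$ and yields $\norm{\RadonApp - \RadonPhi} \le C\sqrt{1 + \delta_x/\delta_s}\,\delta_x/\delta_s$. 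I expect the only genuinely delicate point to be the bookkeeping of the two counting facts: each contributes a factor $\delta_s+\delta_x = \delta_s(1+\delta_x/\delta_s)$, and it is the sharper bound $\abs{e_{ij}^{q,p}}\le\delta_s$ that keeps the accumulated power of $1+\delta_x/\delta_s$ from exceeding the one in the statement.
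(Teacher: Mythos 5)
Your argument is correct: the pixelwise decomposition, the pointwise bound $\abs{e_{ij}^{q,p}} \le \min(\delta_x/\sqrt{2},\delta_s)$, the two counting facts, the per-detector reduction and the final case distinction all hold and combine to exactly the claimed bound $C\sqrt{1+\delta_x/\delta_s}\,\delta_x/\delta_s$. The skeleton is the same as the paper's (write the difference as $\RadonPhi$ applied to $f_{\delta_x}-f$, reduce to the values on sinogram pixels, apply Cauchy--Schwarz, exploit locality of $w_{\delta_s}$), but the accounting is genuinely different. The paper uses the weighted Cauchy--Schwarz inequality $\bigabs{\int_\Omega e f \dd{x}}^2 \le \bigl(\int_\Omega \abs{e}\dd{x}\bigr)\bigl(\int_\Omega \abs{e}\,\abs{f}^2\dd{x}\bigr)$ and estimates both factors through the weak derivative of $w_{\delta_s}$: $\int_\Omega \abs{e}\dd{x} \le 4(\delta_x/\sqrt{2}+\delta_s)\,\delta_x/\sqrt{2}$ (Lipschitz bound times the area of a stripe of width $2(\delta_s+\delta_x/\sqrt2)$ inside the unit ball) and $\sum_p \abs{e} \le \sqrt{2}\,\delta_x$ pointwise (at most two nonvanishing $w_{\delta_s}'$ terms), so the factor $1+\delta_x/\delta_s$ enters only once, through the stripe width. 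In your version the factor $\delta_s+\delta_x$ enters twice (pixels per stripe, detectors per pixel), and it is precisely your sharper pointwise bound $\abs{e}\le \min(\delta_x/\sqrt{2},\delta_s)$ in place of the pure Lipschitz bound that cancels the surplus, as you observe; the paper's weighting trick and your min-plus-counting bookkeeping are two equivalent ways of distributing the same locality information, yours being the more elementary (no weak derivatives needed), the paper's the more compact. One cosmetic point: your pixel count in the stripe silently carries a factor of order $1+\delta_x$ from the radius of the ball containing the relevant pixels, so your constant is uniform only for $\delta_x$ bounded --- harmless in the asymptotic regime of the lemma --- whereas the paper's estimate of $\int_\Omega \abs{e}\dd{x}$ is taken over $\Omega$ directly and avoids this restriction.
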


\begin{proof}
  We proceed in analogy to the proof of
  \cref{lem:radon_detector_angular_discretization}.  Denote by
  $\Pi(x) = x_{ij}$ if $x \in X_{ij}$, i.e., the projection on the
  closest pixel center and observe that
  $\abs{\Pi(x) - x} \leq \frac1{\sqrt{2}} \delta_x$.  For
  $f \in L^2(\Omega)$, estimate
\begin{align*}
  &   \norm{ \RadonApp f - \RadonPhi f}^2%
    =
      \norm{\RadonPhi ( f_{\delta_x} -f )}^2 
      \\
      & \quad \leq
      \frac1{\delta_s^3} \sum_{q=1}^Q \sum_{p=1}^P 
        \Delta_q
        \Bigabs{\int_{\Omega} \bigl(w_{\delta_s}(\Pi(x)
        \inprod \vartheta_q - s_p) - w_{\delta_s}(x \inprod
        \vartheta_q - s_p) \bigr) f(x) \dd{x}}^2.
\end{align*}  %
  We %
  intend to use the Cauchy--Schwarz inequality on the integral with
  respect to $x$ and estimate further. For that purpose, observe that
  \begin{multline*}
    \int_{\Omega} \bigabs{w_{\delta_s}(\Pi(x) \inprod \vartheta_q -
      s_p) - w_{\delta_s}(x \inprod \vartheta_q - s_p)} \dd{x} \\
    \leq \int_0^1 \int_\Omega \bigabs{w_{\delta_s}'\bigl((x +
      t(\Pi(x) - x)) \inprod \vartheta_q - s_p \bigr)} \abs{\Pi(x) -
      x} \dd{x} \dd{t}.
  \end{multline*}
  Note that
  $\abs{x \inprod \vartheta_q - s_p} \geq \delta_x/\sqrt{2} +
  \delta_s$ implies
  $w_{\delta_s}'\bigl((x + t(\Pi(x) - x)) \inprod \vartheta_q - s_p
  \bigr) = 0$, hence
  \[
    \int_{\Omega} \bigabs{w_{\delta_s}(\Pi(x) \inprod \vartheta_q -
      s_p) - w_{\delta_s}(x \inprod \vartheta_q - s_p)} \dd{x} \leq 4
    \Bigl( \frac{\delta_x}{\sqrt{2}} + \delta_s \Bigr) \frac{\delta_x}{\sqrt{2}}.
  \]
  Also,
  \[
    \begin{aligned}
      \sum_{p=1}^P \bigl |w_{\delta_s}(\Pi(x) &\inprod \vartheta_q -
        s_p) - w_{\delta_s}(x \inprod \vartheta_q - s_p) \bigr| \\
        &\leq
      \int_0^1 \sum_{p=1}^P \bigabs{w_{\delta_s}'\bigl((x + t(\Pi(x) -
        x)) \inprod \vartheta_q - s_p \bigr)} \abs{\Pi(x) - x} \dd{t} 
      \leq 2 \frac{\delta_x}{\sqrt{2}},
    \end{aligned}
  \]
  since
  $\bigabs{w_{\delta_s}'\bigl((x + t(\Pi(x) - x)) \inprod \vartheta_q
    - s_p \bigr)}$ is $1$ for at most two $p$ and $0$ else. 
  Altogether, it follows for the $L^2(\Omega')$-norm  that
  
  \begin{equation*}
    \norm{\RadonPhi f - \RadonApp f}^2%
    \leq
    \frac4{\delta_s^3} \sum_{q=1}^Q
    \Delta_q \Bigl( \frac{\delta_x}{\sqrt{2}} + \delta_s
    \Bigr) \delta_x^2 \norm{f}^2
     \leq 8\pi \frac{\delta_x^2}{\delta_s^2} \Bigl(1 +
     \frac{\delta_x}{\delta_s}  \Bigr)
     \norm{f}^2,
     \end{equation*}
  which completes the proof.
\end{proof}

\begin{theorem}
  \label{thm:radon_convergence}
  If $\delta_s \to 0$, $\frac{\delta_\varphi}{\delta_s} \to 0$ and
  $\frac{\delta_x}{\delta_s} \to 0$, then
  $\RadonApp$ converges to $\Radon$ in
  operator norm for linear and continuous mappings
  $L^2(\Omega) \to L^2(\Omega')$.
  
  If additionally,
  $\delta_\varphi = \kronO(\delta_s^{1 + \epsilon})$ and
  $\delta_x = \kronO(\delta_s^{1 + \epsilon})$ for some
  $0 < \epsilon \leq \frac12$, then
  $\norm{\RadonApp - \Radon} =
  \kronO(\delta_s^\epsilon)$ as $\delta_s \to 0$.
\end{theorem}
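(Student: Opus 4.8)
The plan is to obtain \cref{thm:radon_convergence} purely by assembling the four preceding lemmas through the triangle inequality; no new analytic estimate is needed, so the ``hard work'' is entirely contained in \cref{lem:radon_mod_continuity,lem:radon_detector_discretization,lem:radon_detector_angular_discretization,lem:radon_image_discretization}. Concretely, I would start from the decomposition
\[
  \norm{\RadonApp - \Radon} \leq \norm{\RadonApp - \RadonPhi} + \norm{\RadonPhi - \RadonS} + \norm{\RadonS - \Radon}
\]
and control each of the three summands in terms of $\delta_s$, $\delta_\varphi$ and $\delta_x$. The last two terms are already in closed form: $\norm{\RadonPhi - \RadonS} \leq C\,\delta_\varphi/\delta_s$ by \cref{lem:radon_detector_angular_discretization} and $\norm{\RadonApp - \RadonPhi} \leq C\sqrt{1 + \delta_x/\delta_s}\,\delta_x/\delta_s$ by \cref{lem:radon_image_discretization}. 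For the first term, I would combine \cref{lem:radon_detector_discretization} with \cref{lem:radon_mod_continuity}: since the constant in \cref{lem:radon_mod_continuity} does not depend on $f$, we get $\norm{\RadonS f - \Radon f} \leq C \sup_{\abs h < \frac32\delta_s}\modcont{\Radon f}(h,0) \leq C'\sqrt{\delta_s}\,\norm f$ for all $f \in L^2(\Omega)$, hence the operator-norm bound $\norm{\RadonS - \Radon} = \kronO(\delta_s^{1/2})$.

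For the first assertion, I would then simply observe that under $\delta_s \to 0$, $\delta_\varphi/\delta_s \to 0$ and $\delta_x/\delta_s \to 0$ each of the three bounds $\kronO(\delta_s^{1/2})$, $\kronO(\delta_\varphi/\delta_s)$ and $C\sqrt{1 + \delta_x/\delta_s}\,\delta_x/\delta_s$ tends to zero — for the last one, $\sqrt{1 + \delta_x/\delta_s} \to 1$ while $\delta_x/\delta_s \to 0$ — so the sum vanishes in the limit, which is operator-norm convergence $\RadonApp \to \Radon$.

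For the rate, I would use the additional hypotheses $\delta_\varphi = \kronO(\delta_s^{1+\epsilon})$ and $\delta_x = \kronO(\delta_s^{1+\epsilon})$ with $0 < \epsilon \leq \tfrac12$ to note that $\delta_\varphi/\delta_s = \kronO(\delta_s^\epsilon)$ and $\delta_x/\delta_s = \kronO(\delta_s^\epsilon)$; in particular $\delta_x/\delta_s$ is bounded, so $\sqrt{1 + \delta_x/\delta_s}$ is bounded, whence $\norm{\RadonApp - \RadonPhi} = \kronO(\delta_s^\epsilon)$ and $\norm{\RadonPhi - \RadonS} = \kronO(\delta_s^\epsilon)$. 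Since $\epsilon \leq \tfrac12$ gives $\delta_s^{1/2} = \kronO(\delta_s^\epsilon)$, also $\norm{\RadonS - \Radon} = \kronO(\delta_s^\epsilon)$, and adding the three estimates yields $\norm{\RadonApp - \Radon} = \kronO(\delta_s^\epsilon)$ as $\delta_s \to 0$.

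I do not expect a genuine obstacle: the theorem is essentially bookkeeping of exponents once the lemmas are in place. The only point that deserves care is that every intermediate estimate must be uniform in $f \in L^2(\Omega)$ so that it genuinely bounds an operator norm rather than a quantity depending on $f$; this is exactly why it matters that the constant in \cref{lem:radon_mod_continuity} (and hence in the bound for $\norm{\RadonS - \Radon}$) is independent of $f$.
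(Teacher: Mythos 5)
Your proposal is correct and follows the same route as the paper: the identical three-term triangle-inequality decomposition, the bound $\norm{\RadonS - \Radon} \leq C\sqrt{\delta_s}$ obtained by combining \cref{lem:radon_mod_continuity} with \cref{lem:radon_detector_discretization}, and the same exponent bookkeeping for the rate, including the observations that $\sqrt{1+\delta_x/\delta_s}$ stays bounded and that $\epsilon \leq \tfrac12$ makes $\sqrt{\delta_s} = \kronO(\delta_s^\epsilon)$. Your remark on uniformity in $f$ is a sensible point of care but is already guaranteed by the $f$-independent constants in the lemmas, exactly as the paper uses them.
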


\begin{proof}
  Combining \cref{lem:radon_mod_continuity} and
  \cref{lem:radon_detector_discretization} yields
  $\norm{\RadonS - \Radon} \leq C \sqrt{\delta_s}$, so together with
  Lemmas~\ref{lem:radon_detector_angular_discretization}
  and~\ref{lem:radon_image_discretization}, we get
  \[
    \begin{aligned}
      \norm{\RadonApp - \Radon} &\leq
      \norm{\RadonS - \Radon} +
      \norm{\RadonPhi - \RadonS}
      + \norm{\RadonApp - \RadonPhi} \\
      & \leq C \Bigl(\sqrt{\delta_s} + \frac{\delta_\varphi}{\delta_s}
      + \sqrt{1 + \frac{\delta_x}{\delta_s}} \frac{\delta_x}{\delta_s}
      \Bigr),
    \end{aligned}
  \]
  where the right-hand side vanishes if $\delta_s \to 0$, 
  $\frac {\delta_\varphi}{\delta_s} \to 0$ and $\frac{\delta_x}{\delta_s} \to 0$.
  
  If $\delta_\varphi = \kronO(\delta_s^{1 + \epsilon})$ and
  $\delta_x = \kronO(\delta_s^{1 + \epsilon})$ for some
  $0 < \epsilon \leq 1/2$, then in particular,
  $\sqrt{1 + \delta_x/\delta_s}$ stays bounded and $ \sqrt{\delta_s} = \kronO(\delta_s^\epsilon)$ as
  $\delta_s \to 0$, so the claimed rate follows.
\end{proof}

\begin{remark}
Note  that $\frac{\delta_x}{\delta_s} \to 0$ as $\delta_s \to 0$ is necessary for the upper bound of the discretization error in \cref{lem:radon_image_discretization} to vanish, suggesting that the standard choice $\delta_s \approx \delta_x$ might not be well suited and might in fact be the origin of oscillatory behavior described in literature.  This supports the observation in \cite{Qiao2017ThreeNA} that considering smaller image pixels and larger detectors can suppress high-frequency artifacts substantially.
However, it is important to note that this assumption concerning the discretization is sufficient for the stated convergence but we did not prove its necessity. Moreover, the standard setting with $\delta_x=\delta_s$ might still be suitable for weaker forms of convergence such as pointwise convergence.
\end{remark}

\begin{remark}
  Note that in view of inverting the Radon transform, which is ill-posed,
  the presented convergence result could become relevant when
  employing a ``regularization by discretization'' strategy, see, e.g.,
  \cite{H_marik_2016,Natterer_Projection}.
  Currently, this theory is not directly applicable to the type of discretization discussed here, and also requires an estimate for the norm of the discrete pseudoinverse which is outside the scope of this paper.
  Nonetheless, we expect that the presented results will be useful for extending
  it to pixel-driven methods for Radon transform inversion and obtaining
  convergence rates for the solution of
  the inverse problem that can directly be linked with the
  convergence rates for the discretization obtained here.
\end{remark}

Next we wish to consider the convergence behavior of the adjoint towards the backprojection. This does not require additional analysis since adjoint approximations have the same rates of convergence in the operator norm  to the  adjoint operator as the original approximation. So the statements of~\cref{thm:radon_convergence} concerning suitable  discretization strategies, and the corresponding convergence results can be transferred.

\begin{corollary}
\label{cor:backproj_convergence}
  If $\delta_s \to 0$, $\frac{\delta_\varphi}{\delta_s} \to 0$ and
  $\frac{\delta_x}{\delta_s }\to 0$, then
  $(\RadonApp)^*$ converges to $\Radon^*$ in
  operator norm for linear and continuous mappings
  $L^2(\Omega') \to L^2(\Omega)$.
  If additionally,
  $\delta_\varphi = \kronO(\delta_s^{1 + \epsilon})$ and
  $\delta_x = \kronO(\delta_s^{1 + \epsilon})$ for some
  $0 < \epsilon \leq 1/2$, then
  $\norm{ (\RadonApp)^* - \Radon^*} =
  \kronO(\delta_s^\epsilon)$ as $\delta_s \to 0$.
\end{corollary}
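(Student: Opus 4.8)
The plan is to deduce the corollary directly from \cref{thm:radon_convergence} by means of the elementary fact that passing to adjoints is an isometry for the operator norm on bounded linear maps between Hilbert spaces. Since $\RadonApp$ and $\Radon$ are both bounded linear operators $L^2(\Omega) \to L^2(\Omega')$ (boundedness of $\Radon$ was noted after \cref{equ:def_continuous_Radontrans}, and $\RadonApp$ has finite-dimensional range), their difference is bounded as well, and by linearity of the adjoint operation $(\RadonApp)^* - \Radon^* = (\RadonApp - \Radon)^*$ as a bounded linear operator $L^2(\Omega') \to L^2(\Omega)$. The identity $\norm{B^*} = \norm{B}$, valid for any such $B$, then yields $\norm{(\RadonApp)^* - \Radon^*} = \norm{\RadonApp - \Radon}$.

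Granting this, both assertions are immediate. Under $\delta_s \to 0$, $\delta_\varphi/\delta_s \to 0$ and $\delta_x/\delta_s \to 0$, \cref{thm:radon_convergence} gives $\norm{\RadonApp - \Radon} \to 0$, hence $\norm{(\RadonApp)^* - \Radon^*} \to 0$; and under the additional hypotheses $\delta_\varphi = \kronO(\delta_s^{1+\epsilon})$ and $\delta_x = \kronO(\delta_s^{1+\epsilon})$ with $0 < \epsilon \leq \frac12$, the same theorem gives $\norm{\RadonApp - \Radon} = \kronO(\delta_s^\epsilon)$, which transfers verbatim. No estimate beyond \cref{thm:radon_convergence} is needed.

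The one point that genuinely requires attention — and which was in fact already established in \cref{Subsec:Radon_Def} — is that the operator written as $(\RadonApp)^*$ in \cref{equ:def_approx_backprojection}, namely the pixel-driven backprojection, really is the Hilbert-space adjoint of $\RadonApp$ with respect to the $L^2(\Omega)$ and $L^2(\Omega')$ inner products (equivalently, that $\RadonDisc^*$ in \cref{equ:def_discrete_backprojection} is the transpose of $\RadonDisc$ on $U$ and $V$ equipped with their weighted scalar products). With this identification in place there is no further obstacle; the corollary is a formal consequence of \cref{thm:radon_convergence}. Should one prefer to avoid invoking $\norm{B} = \norm{B^*}$ altogether, one could instead re-run the four lemmas of this subsection with the roles of transform and backprojection interchanged — \cref{lem:radon_detector_discretization_Backprojection} is already the backprojection counterpart of \cref{lem:radon_detector_discretization} — but this is strictly more work and produces nothing new.
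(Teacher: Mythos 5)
Your proposal is correct and coincides with the paper's own argument: both deduce the corollary from \cref{thm:radon_convergence} via the identity $\norm{(\RadonApp-\Radon)^*}=\norm{\RadonApp-\Radon}$ for bounded operators between Hilbert spaces. Your additional observation that the pixel-driven backprojection of \cref{equ:def_approx_backprojection} is indeed the Hilbert-space adjoint of $\RadonApp$ is a sensible (and already established) prerequisite, but otherwise the argument is the same.
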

\begin{proof}
  This is a direct consequence of~\cref{thm:radon_convergence} as
  the norm of a linear, continuous operator between Hilbert spaces and
  the norm of its Hilbert space adjoint coincide.
\end{proof}

Note that the restriction $\epsilon\leq \frac{1}{2}$ is due to the
fact in general, the Radon transform for $f \in L^2(\Omega)$ generates
at least a regularity
$\modcont{\Radon f}(h,0) = \kronO(\abs{h}^{\frac12})$,
see~\cref{lem:radon_mod_continuity}. However, for functions $f$ whose
Radon transform admits higher regularity in terms of the modulus of
continuity, this restriction does not apply, as summarized in the
following theorem.
\begin{theorem} \label{thm:convergence_highregularity} Let
  $f \in L^2(\Omega)$ such that the modulus of
  continuity satisfies $\modcont{\Radon f}(h,0)=\kronO(\abs{h}^\epsilon)$ for
  some $\epsilon > 0$. If, additionally,
  $\delta_\varphi = \kronO(\delta_s^{1 + \epsilon})$ and
  $\delta_x = \kronO(\delta_s^{1 + \epsilon})$, then
  $\norm{\RadonApp f - \Radon f} =
  \kronO(\delta_s^\epsilon)$ as $\delta_s \to 0$.
  Moreover, for $g \in L^2(\Omega')$ with $\modcont{g}(h,0)
  =\kronO(\abs{h}^\epsilon)$ we have
  $\norm{(\RadonApp)^* g - \Radon^* g} =
  \kronO(\delta_s^\epsilon)$ as $\delta_s \to 0$.
\end{theorem}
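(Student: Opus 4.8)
The idea is to re-run the four-step error decomposition of \Cref{thm:radon_convergence}, but now track the dependence on the modulus of continuity of $\Radon f$ rather than using the crude $\kronO(\abs{h}^{1/2})$ bound from \Cref{lem:radon_mod_continuity}. First I would write
\[
\norm{\RadonApp f - \Radon f} \leq \norm{\RadonS f - \Radon f} + \norm{\RadonPhi f - \RadonS f} + \norm{\RadonApp f - \RadonPhi f}.
\]
For the first term, \Cref{lem:radon_detector_discretization} gives directly $\norm{\RadonS f - \Radon f} \leq C \sup_{\abs{h} < \frac32\delta_s} \modcont{\Radon f}(h,0)$, and by hypothesis this is $\kronO(\delta_s^\epsilon)$. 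For the remaining two terms I would invoke \Cref{lem:radon_detector_angular_discretization} and \Cref{lem:radon_image_discretization} in their operator-norm form, specialized to this $f$: they give $\norm{\RadonPhi f - \RadonS f} \leq C\frac{\delta_\varphi}{\delta_s}\norm{f}$ and $\norm{\RadonApp f - \RadonPhi f} \leq C\sqrt{1 + \delta_x/\delta_s}\,\frac{\delta_x}{\delta_s}\norm{f}$. Under $\delta_\varphi = \kronO(\delta_s^{1+\epsilon})$ and $\delta_x = \kronO(\delta_s^{1+\epsilon})$, the factor $\sqrt{1+\delta_x/\delta_s}$ stays bounded (since $\delta_x/\delta_s = \kronO(\delta_s^\epsilon) \to 0$) and both terms are $\kronO(\delta_s^\epsilon)$. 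Summing the three contributions yields $\norm{\RadonApp f - \Radon f} = \kronO(\delta_s^\epsilon)$, which is the first claim.

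For the statement about the adjoint, the plan is to mirror the decomposition on the dual side:
\[
\norm{(\RadonApp)^* g - \Radon^* g} \leq \norm{(\RadonS)^* g - \Radon^* g} + \norm{(\RadonPhi)^* g - (\RadonS)^* g} + \norm{(\RadonApp)^* g - (\RadonPhi)^* g}.
\]
The first term is handled by \Cref{lem:radon_detector_discretization_Backprojection}, which already gives $\norm{(\RadonS)^* g - \Radon^* g} \leq C \sup_{\abs{h} < \frac32\delta_s}\modcont g(h,0) = \kronO(\delta_s^\epsilon)$ under the hypothesis on $\modcont g$. For the other two terms I would simply use that the norm of a bounded operator between Hilbert spaces equals the norm of its adjoint, so $\norm{(\RadonPhi)^* - (\RadonS)^*} = \norm{\RadonPhi - \RadonS} \leq C\frac{\delta_\varphi}{\delta_s}$ and $\norm{(\RadonApp)^* - (\RadonPhi)^*} = \norm{\RadonApp - \RadonPhi} \leq C\sqrt{1+\delta_x/\delta_s}\,\frac{\delta_x}{\delta_s}$, both $\kronO(\delta_s^\epsilon)$ by the same reasoning as above. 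Applying these to $g$ and summing gives the second claim.

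The argument is essentially bookkeeping: the only subtlety — and the one point I would state carefully — is that the improved rate comes solely from the detector-discretization lemmas (\Cref{lem:radon_detector_discretization} and \Cref{lem:radon_detector_discretization_Backprojection}), which are genuinely $f$- (resp. $g$-)dependent and deliver $\epsilon$ instead of $\tfrac12$, whereas the angular- and image-discretization errors are controlled uniformly in $f$ and are forced below $\delta_s^\epsilon$ purely by the assumed scaling $\delta_\varphi, \delta_x = \kronO(\delta_s^{1+\epsilon})$. There is no real obstacle here; I would only take care that the implied constants in the $\kronO$-notation for $\delta_\varphi$ and $\delta_x$ are absorbed cleanly, and note that no upper restriction $\epsilon \leq \tfrac12$ is needed anymore precisely because \Cref{lem:radon_mod_continuity} is no longer the bottleneck.
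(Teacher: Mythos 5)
Your proof is correct and follows essentially the same route as the paper: it combines \cref{lem:radon_detector_discretization} (resp.\ \cref{lem:radon_detector_discretization_Backprojection} for the adjoint) with the uniform operator-norm bounds of \cref{lem:radon_detector_angular_discretization} and \cref{lem:radon_image_discretization}, using on the dual side that an operator and its Hilbert-space adjoint have equal norms. Your closing remark correctly identifies why the restriction $\epsilon \leq \tfrac12$ disappears, namely that \cref{lem:radon_mod_continuity} is no longer invoked.
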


\begin{proof}
  The first statement follows from the combination of
  Lemmas~\ref{lem:radon_detector_discretization},~\ref{lem:radon_detector_angular_discretization}
  and~\ref{lem:radon_image_discretization}, while the second is a
  consequence of
  Lemmas~\ref{lem:radon_detector_discretization_Backprojection},~\ref{lem:radon_detector_angular_discretization}
  and~\ref{lem:radon_image_discretization}, using again the fact that
  the norms of a linear, continuous operator and its adjoint coincide.
\end{proof}

\begin{remark}
  While the presented theory used the hat-shaped function
  $w_{\delta_s}(t) =\max(0,\delta_s-|t|)$, other profile functions are
  possible. The theory can be developed analogously for
  all Lipschitz continuous, non-negative $w_{\delta_s}$ which integrate
  to $\delta_s^2$, whose support is compact and whose translates
  with respect to integer multiples of $\delta_s$ sum up to
  the function that is constant $\delta_s$.
\end{remark}

\subsection{Radon transform with limited angle information}\label{Subsec:Limited_information}
While classical tomography uses information for the entire angular range ${[{-\pi ,\pi}[}$, some applications --- due to technical limitations --- have limited freedom in the angles from which projection can be obtained. In spite of the increased difficulty in performing tomography with restricted angular range, some practical procedures require reconstruction from such data.  In the following, we therefore  consider two types of incomplete angle information and show how the theory of pixel-driven Radon transforms extends to such situations. First, the limited angles situation is considered, where the discretization of the angular direction does not cover the entirety of $S^1$, but a finite union of open intervals, e.g., only angles between $\pm 70^\circ$.  Secondly, we consider the sparse angles situation, i.e., one discretizes only the space and offset dimension, while projections for finitely many fixed angles are considered.

\subsubsection{Limited angles}
\label{subsubsec:limited_angles}
In the following, we consider an angle set
$\AL \subset {[{-\pi,\pi}[}$ which corresponds to an open, non-empty
interval modulo $2\pi$ and satisfies $\AL \neq {[{-\pi,\pi}[}$. The
limited-angle Radon transform $\RadonLA$ is then the Radon transform
restricted to $\Omega_\AL' = {]{-1,1}[} \times \AL$, yielding a linear
and continuous mapping $\RadonLA: L^2(\Omega) \to L^2(\Omega_\AL')$ as
well as a corresponding adjoint. For the discretization of $\RadonLA$,
we can proceed analogously, but only need to discretize the angular
domain $\AL$ instead of the whole interval ${[{-\pi,\pi}[}$. With
$\varphi_1, \varphi_{Q} \in \RR$ chosen such that
${]{\varphi_1,\varphi_{Q}}[} = \AL \mod 2\pi$, let
$\varphi_2,\ldots \varphi_{Q-1} \in \RR$ be chosen such that
$\varphi_1 < \varphi_2 < \ldots < \varphi_{Q-1} < \varphi_{Q}$. With
$\varphi_0 = \varphi_1$ and $\varphi_{Q} = \varphi_{Q+1}$, the
corresponding $\Phi_q$, $q = 1,\ldots,Q$, form an a.e.~partition of
$\AL$. The corresponding %
discrete operators $\RadonPhi$ and $\RadonApp$ defined in~\eqref{eq:radon_detector_angle_discretization} and~\eqref{eq:radon_detector_angle_space_discretization} thus naturally map
$L^2(\Omega) \to L^2(\Omega_\AL')$, while a corresponding restriction of $\RadonS$ according to~\eqref{eq:radon_detector_discretization} leads to a mapping from $L^2(\Omega)$ to
$L^2(\Omega_\AL')$. Considering the $L^2$-norms on $\Omega'_\AL$
instead of $\Omega'$, i.e., integrating over $\AL$ instead of $S^1$,
we see that the statements of the
Lemmas~\ref{lem:radon_detector_discretization},~\ref{lem:radon_detector_discretization_Backprojection},
\ref{lem:radon_detector_angular_discretization}
and~\ref{lem:radon_image_discretization} remain true for these
modifications.  Consequently, we have the following theorem.

\begin{theorem}
  \label{thm:limited_angle_convergence}
  Considering $\RadonLA$ and $(\RadonLA)^*$ instead of $\Radon$
  and $\Radon^*$, respectively, the convergence results
  of~\cref{thm:radon_convergence}, \cref{cor:backproj_convergence}
  and~\cref{thm:convergence_highregularity}
  remain true.
\end{theorem}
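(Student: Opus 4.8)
The plan is to adapt, essentially verbatim, the chain-of-lemmas argument that established \cref{thm:radon_convergence}, \cref{cor:backproj_convergence} and \cref{thm:convergence_highregularity}, now working with the operators restricted to $\Omega'_\AL = {]{-1,1}[} \times \AL$. The key observation, already made in the paragraph preceding the statement, is that every estimate in Lemmas~\ref{lem:radon_detector_discretization}, \ref{lem:radon_detector_discretization_Backprojection}, \ref{lem:radon_detector_angular_discretization} and~\ref{lem:radon_image_discretization} is obtained by integrating a pointwise (in $\vartheta$) bound over the angular domain. Replacing $\int_{S^1}$ by $\int_{\AL}$ only shrinks the domain of integration, so each of those bounds remains valid with the same constants; moreover the partition $(\Phi_q)_{q=1}^Q$ of $\AL$ satisfies exactly the same combinatorial properties (at most two detectors hit per pixel, telescoping of $(\varphi_q - \varphi_{q-1})^3$ terms) that were used, so the proofs carry over unchanged.

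Concretely, I would proceed as follows. First, record that \cref{lem:radon_mod_continuity} is a statement about $\Radon f$ on all of $\Omega'$, hence a fortiori controls $\modcont{\RadonLA f}(h,0) \leq \modcont{\Radon f}(h,0) \leq c\sqrt{\abs h}\norm f$ by monotonicity of the integral in the angular variable. Second, invoke the restricted versions of Lemmas~\ref{lem:radon_detector_discretization} and~\ref{lem:radon_detector_angular_discretization} and~\ref{lem:radon_image_discretization} — which, as noted, hold verbatim — to get
\[
  \norm{\RadonLAApp - \RadonLA} \leq C\Bigl(\sqrt{\delta_s} + \frac{\delta_\varphi}{\delta_s} + \sqrt{1 + \tfrac{\delta_x}{\delta_s}}\,\frac{\delta_x}{\delta_s}\Bigr),
\]
and then the two hypotheses on $\delta_\varphi, \delta_x$ give convergence and the rate $\kronO(\delta_s^\epsilon)$ exactly as in the proof of \cref{thm:radon_convergence}. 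Third, for the adjoint statement (the analogue of \cref{cor:backproj_convergence}) use that the norm of a bounded operator between Hilbert spaces equals that of its adjoint, so $\norm{(\RadonLAApp)^* - (\RadonLA)^*} = \norm{\RadonLAApp - \RadonLA}$; here it is worth noting that the adjoint of the limited-angle transform is still well-defined as a map $L^2(\Omega'_\AL) \to L^2(\Omega)$ and that the restricted Lemma~\ref{lem:radon_detector_discretization_Backprojection} gives the corresponding bound directly in terms of $\modcont{g}(h,0)$ for $g \in L^2(\Omega'_\AL)$. Fourth, for the higher-regularity statement (the analogue of \cref{thm:convergence_highregularity}), feed the hypothesis $\modcont{\RadonLA f}(h,0) = \kronO(\abs h^\epsilon)$ (respectively $\modcont g(h,0) = \kronO(\abs h^\epsilon)$) into the restricted Lemmas~\ref{lem:radon_detector_discretization} (resp.~\ref{lem:radon_detector_discretization_Backprojection}), \ref{lem:radon_detector_angular_discretization} and~\ref{lem:radon_image_discretization} and combine.

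I do not expect a genuine obstacle here — this theorem is essentially a bookkeeping corollary. The one point that deserves a careful sentence rather than a hand-wave is checking that the \emph{constants} in the four lemmas really are independent of which sub-interval $\AL$ one restricts to: in Lemma~\ref{lem:radon_detector_angular_discretization} the sum $\sum_q \int_{S^1}\chi_{\Theta_q}\abs{\vartheta - \vartheta_q}^2\,\dd\vartheta$ was bounded by $\tfrac{\pi}{6}\delta_\varphi^2$ using $\sum_q(\varphi_q - \varphi_{q-1}) = 2\pi$; over $\AL$ this telescoping sum instead equals $\abs\AL \leq 2\pi$, so the bound only improves, and similarly the $\int_\Omega$-estimates never saw the angular domain at all. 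Hence the final write-up can simply say that, with the $L^2$-norms taken over $\Omega'_\AL$, Lemmas~\ref{lem:radon_detector_discretization}--\ref{lem:radon_image_discretization} hold with the same (or smaller) constants, and that the proofs of \cref{thm:radon_convergence}, \cref{cor:backproj_convergence} and \cref{thm:convergence_highregularity} then apply word for word.
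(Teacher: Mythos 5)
Your proposal is correct and follows essentially the same route as the paper: the paper's own argument is precisely the observation that, with the $L^2$-norms taken over $\Omega'_\AL$ (integrating over $\AL$ instead of $S^1$), Lemmas~\ref{lem:radon_detector_discretization}--\ref{lem:radon_image_discretization} remain valid with unchanged (or smaller) constants, while \cref{lem:radon_mod_continuity} is inherited from the full transform, so the proofs of \cref{thm:radon_convergence}, \cref{cor:backproj_convergence} and \cref{thm:convergence_highregularity} carry over word for word. Your extra check that the telescoping angular sum now equals $\abs{\AL} \leq 2\pi$ is a correct (and slightly more explicit) justification of what the paper states without detail.
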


Note that one can easily generalize the results to $\AL$ consisting of
finitely many intervals instead of just one: If
$\AL = \AL_1 \cup \ldots \cup \AL_I$ where each $\AL_i$ is an interval
of the above type and the $\AL_1,\ldots,\AL_I$ are pairwise disjoint,
then $L^2(\Omega_\AL')$ can be identified with
$L^2(\Omega_{\AL_1}') \times \cdots \times L^2(\Omega_{\AL_I}')$ and
$\RadonLA f$ can be identified with 
$(\Radon^{\AL_1}f, \ldots,
\Radon^{\AL_I}f)$. As~\cref{thm:limited_angle_convergence} can be applied to
every $\Radon^{\AL_i}$, the results also follow for $\RadonLA$.

\subsubsection{Sparse angles}
The Radon transform can also be defined for a finite angle set
$\mathcal{F} = \sett{\varphi_1, \ldots, \varphi_Q} \subset
{[{-\pi,\pi}[}$ for $\varphi_1 < \varphi_2 < \ldots <
\varphi_Q$. Denoting by
$\Omega_\mathcal{F}' = {]{-1,1}[} \times \mathcal{F}$, continuous
extension of~\eqref{equ:def_continuous_Radontrans} yields the linear
and continuous operator
$\RadonSA: L^2(\Omega) \to L^2(\Omega_\mathcal{F}')$, where
$L^2(\Omega_\mathcal{F}')$ is associated with the counting measure in
the angular direction, i.e.,
$\norm{g}^2 = \sum_{q=1}^Q \int_{]{-1,1}[} \abs{g(s,\varphi_q)}^2
\dd{s}$ for $g \in L^2(\Omega_\mathcal{F}')$.  Then,
equations~\eqref{eq:radon_detector_discretization},~\eqref{eq:radon_detector_angle_discretization}
and~\eqref{eq:radon_detector_angle_space_discretization} yield
respective (semi-)discrete sparse-angle operators $\RadonS$,
$\RadonPhi$ and $\RadonApp$, and since each $\varphi_q \in \Phi_q$, we
have $\RadonS = \RadonPhi$. Further, as each $\Phi_q$ is assigned
unit mass, it holds that $\Delta_q = 1$ for each $q = 1,\ldots,Q$.

However, since the sparse-angle Radon transform $\RadonSA$ is no longer a
restriction of the full transform $\Radon$, we cannot expect similar
approximation results in this situation. In particular, the smoothing
property of~\cref{lem:radon_mod_continuity} cannot be established for
$\RadonSA$.  Nevertheless, replacing $\mathcal{H}^1$-integration on
$S^1$ by $\mathcal{H}^0$-integration (i.e., summation) on
$\sett{\vartheta_1,\ldots,\vartheta_Q}$, the statements of
Lemmas~\ref{lem:radon_detector_discretization},~\ref{lem:radon_detector_discretization_Backprojection}
and~\ref{lem:radon_image_discretization} can still be obtained by
straightforward adaptation. This is sufficient to prove strong
operator convergence.

\begin{theorem} \label{thm:finite_angle_convergence}
  Let $\delta_s\to 0$ and $\frac{\delta_x}{\delta_s}\to 0$. Then, for
  any $f\in L^2(\Omega)$ and $g \in L^2(\Omega_\mathcal{F}')$ it holds that
  \[
    \lim_{\delta_s\to 0} \| \RadonSA f- \RadonApp f\| =0
    \qquad \text{as well as} \qquad
    \lim_{\delta_s \to 0} \norm{(\RadonSA)^* g - (\RadonApp)^* g} = 0.
  \]
  If $\delta_x = \kronO(\delta_s^{1+\epsilon})$ for some $\epsilon > 0$,
  then it holds for $f \in L^2(\Omega)$ with $\modcont{\RadonSA f}(h,0) = \kronO(\abs{h}^\varepsilon)$ and $g \in L^2(\Omega_\mathcal{F}')$ with
  $\modcont g(h,0) = \kronO(\abs{h}^\epsilon)$ that
  \[
    \| \RadonSA f- \RadonApp f\| = \kronO(\delta_s^\epsilon)
    \qquad \text{as well as} \qquad
    \norm{(\RadonSA)^* g - (\RadonApp)^* g} = \kronO(\delta_s^\epsilon).
  \]
\end{theorem}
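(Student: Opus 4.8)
The plan is to mimic the proof of \cref{thm:radon_convergence} and \cref{thm:convergence_highregularity}, but with $S^1$-integration replaced by summation over the finite set $\sett{\vartheta_1,\ldots,\vartheta_Q}$ throughout. Since in the sparse-angle setting $\RadonSA = \RadonPhi = \RadonS$ (each $\varphi_q$ lies in its own cell $\Phi_q$ with $\Delta_q = 1$), there is no angular-discretization error to control: the only two sources of error are the passage from line integral to area integral together with the offset discretization (captured by \cref{lem:radon_detector_discretization}, now in its counting-measure version), and the image discretization $f \mapsto f_{\delta_x}$ (captured by \cref{lem:radon_image_discretization}). Concretely, I would first state that the proofs of \cref{lem:radon_detector_discretization}, \cref{lem:radon_detector_discretization_Backprojection} and \cref{lem:radon_image_discretization} go through verbatim with $\int_{S^1}\dd\vartheta$ replaced by $\sum_{q=1}^Q$ and $\Delta_q$ set to $1$, as already announced in the text preceding the theorem; the key point is that these estimates never exploited integrability in $\vartheta$, only pointwise bounds on the weights $w_{\delta_s}$ and their derivatives, uniform in $\vartheta$.

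Next, for the rate statements, I would observe that \cref{lem:radon_detector_discretization} (counting-measure version) gives $\norm{\RadonSA f - \RadonApp f} \le \norm{\RadonS f - \RadonSA f} + \norm{\RadonApp f - \RadonPhi f}$, where the first summand is bounded by $C \sup_{\abs{h} < \frac32\delta_s}\modcont{\RadonSA f}(h,0)$ and the second, using $f \mapsto f_{\delta_x}$, by $C\sqrt{1 + \delta_x/\delta_s}\,\frac{\delta_x}{\delta_s}\norm{f}$ as in \cref{lem:radon_image_discretization}. Under the hypothesis $\modcont{\RadonSA f}(h,0) = \kronO(\abs{h}^\epsilon)$, the first term is $\kronO(\delta_s^\epsilon)$; under $\delta_x = \kronO(\delta_s^{1+\epsilon})$, the factor $\sqrt{1+\delta_x/\delta_s}$ stays bounded and $\delta_x/\delta_s = \kronO(\delta_s^\epsilon)$, so the second term is $\kronO(\delta_s^\epsilon)$ as well. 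The adjoint estimate is entirely analogous, invoking \cref{lem:radon_detector_discretization_Backprojection} in place of \cref{lem:radon_detector_discretization} and using that the operator norm of a Hilbert-space operator equals that of its adjoint — except here the bound is applied to the fixed element $g$, so one simply uses the $g$-dependent estimates of the two lemmas directly.

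For the strong-convergence (no-rate) statement, the subtlety flagged in the text is that \cref{lem:radon_mod_continuity} is \emph{not} available for $\RadonSA f$, so one cannot conclude a uniform $\sqrt{\delta_s}$-rate and hence cannot argue operator-norm convergence — only strong convergence. The standard remedy is a density argument: for $f$ in a dense subset of $L^2(\Omega)$ (say, smooth compactly supported functions, or functions whose Radon transform is Lipschitz in $s$, for which $\modcont{\RadonSA f}(h,0) = \kronO(\abs h)$), the rate statement already gives $\norm{\RadonSA f - \RadonApp f} \to 0$; combined with the uniform bound $\norm{\RadonApp} \le C$ (which follows from the Lemmas, the norms being bounded independently of the discretization once $\delta_x/\delta_s$ is bounded) and the boundedness of $\RadonSA$, a routine $\varepsilon/3$-argument extends strong convergence to all $f \in L^2(\Omega)$. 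The same scheme applies to the adjoints, using density in $L^2(\Omega_\mathcal{F}')$ and the uniform bound on $\norm{(\RadonApp)^*} = \norm{\RadonApp}$.

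I expect the main obstacle to be purely bookkeeping: verifying carefully that the chain of inequalities in \cref{lem:radon_detector_discretization}, \cref{lem:radon_detector_discretization_Backprojection} and \cref{lem:radon_image_discretization} indeed survives the replacement of $\int_{S^1}$ by $\sum_q$ with no loss — in particular that the substitution $x = s\vartheta + t\vartheta^\perp$ used in \cref{lem:radon_detector_discretization_Backprojection} still makes sense angle-by-angle (it does, since it is performed for each fixed $\vartheta$ separately) and that the Jensen/Cauchy--Schwarz steps do not secretly rely on the Haar measure on $S^1$. A secondary point to get right is the precise dense set to use for the strong-convergence argument and the uniform operator bound $\norm{\RadonApp} \le C$, which should be recorded explicitly as it is used in the $\varepsilon/3$-argument; this requires only that $\delta_x/\delta_s$ remains bounded, which is guaranteed by $\delta_x/\delta_s \to 0$.
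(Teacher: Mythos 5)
Your decomposition and the estimates you invoke are exactly those of the paper: the sparse-angle adaptations of \cref{lem:radon_detector_discretization}, \cref{lem:radon_detector_discretization_Backprojection} and \cref{lem:radon_image_discretization}, together with $\RadonS = \RadonPhi$, give $\norm{\RadonSA f - \RadonApp f} \leq C\bigl(\sup_{\abs{h}<\frac32\delta_s}\modcont{\RadonSA f}(h,0) + \sqrt{1+\delta_x/\delta_s}\,\tfrac{\delta_x}{\delta_s}\norm{f}\bigr)$ and the analogous bound for the adjoint applied to the fixed $g$, from which the rate statements follow exactly as in the paper. Where you genuinely diverge is the no-rate statement: you propose a density/$\varepsilon/3$ argument resting on a uniform bound for $\norm{\RadonApp}$, whereas the paper simply observes that the displayed bound is already a per-function bound and that the $L^2$ modulus of continuity of \emph{any} $L^2$ function vanishes as $h \to 0$ (continuity of translation in $L^2$), so both terms tend to zero for every fixed $f$ and $g$ once $\delta_s \to 0$ and $\delta_x/\delta_s \to 0$ --- no dense subclass and no uniform operator bound are needed. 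Your route is correct: the uniform bound does hold, since $\modcont{g}(h,0) \leq 2\norm{g}$ turns the first lemma into an operator-norm bound and \cref{lem:radon_image_discretization} already is one; but it is more machinery than the statement requires, and for the dense subclass you cannot literally quote the rate statement (which assumes $\delta_x = \kronO(\delta_s^{1+\epsilon})$), so you would have to apply the two lemma bounds directly there. Finally, a cosmetic slip: $\RadonSA$ is the continuous sparse-angle transform and is \emph{not} equal to $\RadonS = \RadonPhi$; your subsequent triangle inequality treats them correctly, so only the opening sentence needs rewording.
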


\begin{proof}
  The combination of
  Lemmas~\ref{lem:radon_detector_discretization},~\ref{lem:radon_detector_discretization_Backprojection},~\ref{lem:radon_image_discretization}
  adapted to sparse angles and the fact that $\RadonS = \RadonPhi$
  yields, for $f \in L^2(\Omega)$ that
  \[
    \norm{\RadonSA f- \RadonApp f} \leq C \Bigl( \sup_{\abs{h} <
      \frac32 \delta_s} \modcont{\Radon f}(h,0) +\sqrt{1 +
      \frac{\delta_x}{\delta_s}} \frac{\delta_x}{\delta_s} \norm{f} \Bigr)
  \]
  and
  $g \in L^2(\Omega_\mathcal{F}')$ that
  \[
    \norm{(\RadonSA)^* g - (\RadonApp)^* g} \leq C \Bigl(
    \sup_{\abs{h} < \frac32\delta_s} \modcont g(h,0) + \sqrt{1 +
      \frac{\delta_x}{\delta_s}} \frac{\delta_x}{\delta_s} \norm{g}
    \Bigr).
  \]
  The first statement then follows from the fact that the modulus of
  continuity converges to zero for any $L^2$-function, while the
  second is an immediate consequence of the assumed rates.
\end{proof}

\section{The pixel-driven fanbeam transform}\label{Sec:Fanbeam}
Projection methods are  not limited to the parallel beam setting as some applications require different measurement and sampling approaches. One such different setting is the fanbeam setting that allows for an alternative version of tomography with a single-point source sending rays along non parallel lines to the detector. In the following, we present a discretization of the fanbeam transform following the same basic principle as used for the pixel-driven Radon transform and show convergence with analogous methods using the relation between the Radon transform and the fanbeam transform. 
\subsection{Definition and notation}\label{Subsec:Fanbeam_Definition} We consider the following geometry, see~\cref{Fig:fanbeam_geometry}:
We assume the density of a sample to be supported in the unit ball $B(0,1)$, that the distance from the emitter to the origin is $\RE>1$ and does not depend on the specific angle the source is placed in relation to the sample. Moreover, $\RD > \RE + 1$ denotes the distance from the source to the detector, while the total width $\DW$ of the detector is chosen such that all lines from the source passing through $B(0,1)$ are detected, which amounts to setting $\DW = 2\frac{R}{\sqrt{\RE^2 - 1}}$.

\usetikzlibrary{decorations.pathreplacing}
\usetikzlibrary{positioning,patterns}
\begin{figure}
\center
 \begin{tikzpicture}
 
\draw[->,thick,dashed] (0,0) -- (0,2.5) node [above,yshift=-0.05cm] {$\bold y$};
\draw[->,thick,dashed] (0,0) -- (2.5,0) node [right,xshift=-0.05cm] {$\bold x$};

\draw[thick] (0,0) circle (2cm );

\draw[thick,dashed] (1.69,1.69) -- (-1.75,-1.75);

\draw [very thick,xshift=0.1cm,yshift=0.1cm,fill=gray,rotate around={135:(1.69+0.15,1.69+0.15)}] (1.69- 2.83+0.15,1.69-0.15+0.15) rectangle (1.69+ 2.83+0.15,1.69+0.15+0.15);

\draw[thick,->,blue] (-1.75+1.5,-1.75) arc [radius=1.5cm,start angle=0,end angle=45] node [midway, left,yshift=-0.1cm,xshift=0.05cm]{$\alpha$};
\draw[thick,dashed] (-1.75,-1.75) -- (-1.75+2,-1.75) ;

\draw[->,thick]  (0,0) --(1.69,1.69) node [midway,rotate=45,xshift=-0.15cm,yshift=0.25cm]{$\vartheta(\alpha)^\perp$};

\draw[] %
 (1.69-2-0.2,1.69+2+0.2) -- (1.69+2+0.2,1.69-2-0.2) node [xshift=0.7cm, yshift=0.0cm] {detector};
 \draw[thick,->] (1.69,1.69) -- (1.69+1.69,1.69-1.69)node [midway,rotate=-45,xshift=-0cm,yshift=0.05cm,below]{$\vartheta(\alpha)$};

\draw[decorate,decoration={brace,amplitude=10pt},xshift=0.15cm,yshift=0.15cm] (1.69-2+0.3,1.69+2+0.3) -- (1.69+2+0.3,1.69-2+0.3) node [black,midway,xshift=0.475cm,yshift=0.3cm] {\footnotesize $\DW$};

\draw[decorate,decoration={brace,amplitude=10pt},xshift=-0.075cm,yshift=0.075cm] (-1.75,-1.75) -- (0,0) node [black,midway,xshift=-0.125cm,yshift=0.475cm] {\footnotesize $\RE$};

\draw[decorate,decoration={brace,amplitude=10pt},xshift=-2.3cm,yshift=2.3cm]       (-1.75,-1.75) -- (1.69,1.69) node [black,midway,xshift=0.2cm,yshift=0.3cm,xshift=-0.65cm] {\footnotesize $\RD$};
\draw[dashed] (-1.75,-1.75) -- (-1.75-2.2,-1.75+2.2);

\draw[very thick, red] (-1.75,-1.75) -- (1.69-1.82,1.69+1.82) node [near end,left,red,xshift=0.125cm,yshift=0.2cm] {$L( \xi,\alpha)$};

\draw[blue,thick,<-] %
(1.69,1.69) -- (1.69-1.82,1.69+1.82) node [midway,xshift=-0.2cm,yshift=-0.2cm] {$\xi$};

 \draw (1.69,1.69)  (-1.75,-1.75) node (first)[draw,solid,very thick,rotate=-45,fill=gray,xshift=0cm,yshift=-0.129cm] {\quad \quad} node [ left of =first, node distance =0.8cm,rotate=0,xshift=0.15cm,yshift=-0.15cm]{source};

  \end{tikzpicture}
  \caption{Geometry for the fanbeam transform.
    The line $L(\alpha,\xi)$ connects the source and the detector,
    both rotated by the angle $\alpha$, at detector offset $\xi$.  The
    values $\RD$ and $\RE$ denote the distances from the emitter to
    the detector and origin, respectively, while $W$ denotes the detector width.}
  \label{Fig:fanbeam_geometry}
\end{figure}
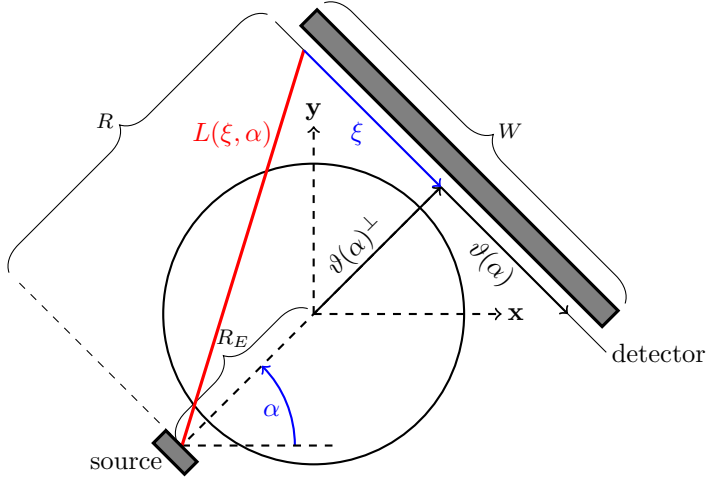

\begin{definition}
  The fanbeam transform %
  of a continuous $f: \RR^2 \to \RR$ with support compact within $B(0, \RE)$ is defined as
\begin{align} \label{equ:Def_Fanbeam_continuous}
\notag [\Fanbeam f](\xi, \alpha)&= \int_{\set{x \in \RR^2}{\frac{x\cdot \vartheta(\alpha) \RD}{x\cdot \vartheta(\alpha)^\perp+\RE}=\xi}} f(x)\dd{\mathcal{H}^1(x)}
\\&
=\sqrt{\xi^2+\RD^2}\int_\RR f\big(t(\xi \vartheta(\alpha)+\RD \vartheta(\alpha)^\perp)-\RE \vartheta(\alpha)^\perp\big)\dd{t},
\end{align}
where $\xi\in \RR$ %
is the detector offset and $\alpha \in {[{-\pi,\pi}[}$ denotes the angle between the shortest line connecting source and detector and the $\bold x$-axis. 
The adjoint operation
for $g: \RR \times S^1 \to \RR$ continuous with compact support
and $x \in B(0, \RE)$
is defined as
\begin{equation} \label{equ:Def_Fanbeam_continuous_adjoint}
[\Fanbeam^* g](x) = \int_{S^1}  \sqrt{\left (\frac{x\cdot\vartheta\RD }{x\cdot \vartheta^\perp+\RE}\right)^2+\RD^2} \frac{1}{x\cdot \vartheta^\perp+\RE} g\Big(\frac{x\cdot \vartheta\RD}{x\cdot \vartheta^\perp+\RE},\vartheta\Big)\dd\vartheta.
\end{equation}
\end{definition}

\begin{remark}
In the above definition, the set $\set{x \in \RR^2}{\frac{x\cdot \vartheta(\alpha) \RD}{x\cdot \vartheta(\alpha)^\perp+\RE}=\xi}$ %
describes the line from the source to detector at offset $\xi$ where both are rotated by $\alpha$.

As it is also the case for the Radon transform, the adjoint corresponds to an integral over all $L(\xi,\alpha)$ passing through $x$. In this context, we note that for a fixed $\vartheta = \vartheta(\alpha)$ and $x \in B(0, \RE)$, the detector offset $\xi$ and the integration variable $t$ in~\eqref{equ:Def_Fanbeam_continuous} can be expressed as
\[
  t = \frac{x \inprod \vartheta^\perp + \RE}{R},
  \qquad
  \xi = \frac{x \inprod \vartheta}{t} =  \frac{x \inprod \vartheta R}{x \inprod \vartheta^\perp + \RE}.
\]
With the change of coordinates $x = t(\xi \vartheta + R\vartheta^\perp) - \RE \vartheta^\perp$ with transformation determinant $\frac1{Rt} = \frac{1}{x \inprod \vartheta^\perp + R_E}$, the operator $\Fanbeam^*$ in~\eqref{equ:Def_Fanbeam_continuous_adjoint} can easily be seen to be the formal adjoint of $\Fanbeam$ in~\eqref{equ:Def_Fanbeam_continuous} with respect to the $L^2$ scalar product.
\end{remark}

\begin{remark} It can also be observed that the fanbeam transform is a re\-para\-me\-tri\-za\-tion of the Radon transform according to
\begin{equation}\label{equ:Connection_Radon_fanbeam_arguments}
  \quad [\Fanbeam f] (\xi,\alpha)=[\Radon f] %
  (s,\varphi)%
 \quad \text{for} \quad
 \left( s= \frac{\xi \RE}{\sqrt{\xi^2+\RD^2}}, \quad \varphi= \alpha-\arctan\Big(\frac{\xi}{\RD}\Big)\right).
\end{equation}
In particular, $(\xi,\alpha)\mapsto(s,\varphi)$ is a diffeomorphism
between $\RR \times S^1$ and ${]{-\RE,\RE}[} \times S^1$.
\end{remark}

This different parametrization  also affects the sampling strategies, and thus, a suitable discretization of parameters and corresponding discrete image and  sinogram spaces must be considered. For this purpose, $Q$ angles $\alpha_1< \dots< \alpha_Q\in [-\pi,\pi[$ and an equidistant grid of $P$ detector offsets $\xi_1,\dots, \xi_P\in {]{-\frac{\DW}{2},\frac{\DW}{2}}[}$ with $\xi_p=\frac{\DW}{P}\bigl(p-\frac{(P+1)}{2}\bigr)$ are considered. We use $\Xi_p=\xi_p+{[{-\frac{\delta_\xi}{2},\frac{\delta_\xi}{2}}[}$ and $\Phi_q={[{\frac{\alpha_q+\alpha_{q-1}}{2},\frac{\alpha_q+\alpha_{q+1}}{2}}[}$ such that $(\Xi_p\times \Phi_q)_{pq}$ is a partition of the sinogram space, where $\delta_\xi=\frac{\DW}{P}$ is the degree of detector discretization, $\delta_\alpha=\max_{q=1,\ldots,Q} \alpha_{q+1} - \alpha_q$ denotes the angular discretization width and  $\Delta_q = (\alpha_{q+1} - \alpha_{q-1})/2$ denotes again the length of $\Phi_q$. Moreover, the discrete sinogram space $V$ is the space of functions on the grid $\{\xi_1,\dots,\xi_P\} \times \{\alpha_1,\dots,\alpha_Q\}$ equipped with the norm on $L^2(\RR \times S^1)$ as in~\eqref{equ:def_discrete_image_sinogram_spaces}.

Analogous to the Radon transform case,
the fanbeam transform is first approximated by replacing the line integral by an area integral resulting in
\begin{align}\label{equ:Def_Fanbeam_convolution_approximation}
[\FanbeamHat f](\xi,\alpha) &= \frac{\sqrt{\xi^2+\RD^2}}{\delta_\xi^2} \int_{\RR} w_{\delta_\xi}(\tau-\xi)\frac{[\Fanbeam f](\tau,\alpha)}{\sqrt{\tau^2+\RD^2}} \dd{\tau}
\\
&=\frac{\sqrt{\xi^2+\RD^2} }{\delta_\xi^2}\int_{\Omega} w_{\delta_\xi}\Big(\frac{x\cdot \vartheta(\alpha)\RD }{x\cdot \vartheta^\perp(\alpha)+\RE}-\xi\Big) \frac{f(x)}{x\cdot \vartheta^\perp(\alpha)+\RE}\dd{x}, \notag 
\end{align}
where again $w_{\delta_\xi}(\tau) =\max(0,\delta_{\xi}-|\tau|)$. Observe that we
weight the fanbeam transform with $\frac1{\sqrt{\tau^2 + R^2}}$ inside the integral with respect to $\tau$ which is compensated by $\sqrt{\xi^2 + R^2}$ outside the integral. This turns out to be advantageous in the subsequent analysis. Other choices are, of course, possible and require only minor adaptations.

The image to transform is again given on a discrete $N\times M$ grid with discretization width $\delta_x > 0$, and $x_{ij}$,  $X_{ij}$
as described in \Cref{Subsec:Radon_Def}. However, we additionally assume that the support of $f$ is such that whenever $|\supp f \cap X_{ij}| > 0$, then $\abs{x_{ij}} < \RE$, i.e., the centers of the pixels which contribute to the discrete fanbeam transform are contained in the ball $B(0,\RE)$, and adapt the space $U$ according to %
\[
  U = \linspan\set{\chi_{X_{ij}}}{\abs{x_{ij}} < \RE, \ i=1,\ldots,N,
    \ j=1,\ldots,M}.
\]
Then, performing the same discretization steps as for the Radon transform, i.e., using $f_{\delta_x}$ as discretization of $f$ according to~\eqref{eq:discretization_image}, extrapolation from $(\xi_p,\alpha_q)$ onto $\Xi_p\times \Phi_q$ and application of $\FanbeamHat$, yields \begin{multline}\label{equ:Def_Fanbeam_approximation_complete}
[\FanbeamApp f] (\xi,\alpha)= \frac{\delta_x^2}{\delta_\xi^2}  \sum_{p=1}^P \sum_{q=1}^Q \chi_{\Xi_p}(\xi)\chi_{\Phi_q}(\alpha)\sqrt{\xi_p^2+\RD^2} 
\\ 
\cdot \sum_{i,j=1}^{N,M} w_{\delta_\xi}\Big(\frac{x_{ij}\cdot \vartheta_q \RD  }{x_{ij}\cdot \vartheta_q^\perp+\RE}-\xi_p\Big) \frac{f_{ij}}{x_{ij}\cdot \vartheta^\perp_q+\RE},
\end{multline}
where $f_{ij} = \frac1{\delta_x^2} \int_{X_{ij}} f(x) \dd{x}$ as well as
\begin{multline}
[(\FanbeamApp)^*g](x)
=  \sum_{i,j=1}^{N,M} \chi_{X_{ij} }(x) \sum_{q=1}^Q
\frac{\Delta_q}{\delta_\xi}
\sum_{p=1}^P   w_{\delta_\xi}\Big(\frac{x_{ij} \cdot \vartheta_q \RD}{x_{ij} \cdot \vartheta^\perp_q+\RE}-\xi_p\Big)\\
\cdot \frac{\sqrt{\xi^2_p+\RD^2}}{x_{ij} \cdot \vartheta^\perp_q+\RE}
g_{pq},
\end{multline}
where $g_{pq} = \frac1{\delta_\xi \Delta_q} \int_{\Xi_p} \int_{\Phi_q} g(\xi,\alpha) \dd{\xi} \dd{\alpha}$.
Switching to the fully discrete setting by associating
elements of $U$ and $V$, respectively, in terms of their coefficients
gives $\FanbeamDisc: U \to V$ according to
\begin{equation} \label{equ:def_discrete_fanbeam}
[\FanbeamDisc f]_{pq}= \frac{\delta_x^2}{\delta_\xi^2}\sqrt{\xi_p^2+\RD^2}\sum_{i,j = 1}^{N,M} w_{\delta_\xi}\Big(\frac{ x_{ij}\cdot \vartheta_q \RD}{x_{ij}\cdot \vartheta^\perp_q+\RE}-\xi_p\Big) \frac{f_{ij}}{x_{ij}\cdot \vartheta^\perp_q+\RE},
\end{equation}
whose adjoint $\FanbeamDisc^*\colon V \to U $ reads as %
\begin{equation}\label{equ:def_discrete_fanbeam_backprojection} 
[\FanbeamDisc^*g]_{ij}=\sum_{q=1}^Q \frac{\Delta_q}{\delta_\xi} \sum_{p=1}^P   w_{\delta_\xi}\Big(\frac{x_{ij} \cdot \vartheta_q \RD}{x_{ij} \cdot \vartheta^\perp_q+\RE}-\xi_p\Big)
  \frac{\sqrt{\xi^2_p+\RD^2} }{x_{ij} \cdot \vartheta^\perp_q+\RE} g_{pq}.
\end{equation}
Note that in these discretizations, the distance between the source
and $x_{ij}$ projected to the shortest line connecting source and
detector, i.e., $x_{ij} \inprod \vartheta_q^\perp + \RE$, plays a
major role. This is because this distance describes the width of the
fan associated with $\Xi_p$ in the point $x_{ij}$, which is, by
construction, bounded from below and enters into the discrete fanbeam
transform in form of an inverse weight as well as a rescaling of the
hat function $w_{\delta_\xi}$.

\subsection{Convergence analysis}\label{Subsec:Fanbeam_Convergence}
The convergence analysis follows in broad strokes the approach in  \Cref{Subsec:Radon_Convergence}, using similar lemmas though some details in the proofs need to be adjusted. In the following, let $\Omega = B(0,1)$ and $\Omega' = {]{-\frac{\DW}2, \frac{\DW}{2}}[} \times S^1$. Further, assume that $\Omega$ is contained in the union of all pixels and that $0 < \delta_x < \frac1{\sqrt{2}}(\RE - 1)$ such that whenever $B(0,1) \cap X_{ij} \neq \emptyset$, we have $\abs{x_{ij}} < \RE$.

For technical reasons we consider the operator $\myG\colon L^2(\Omega)\to L^2(\Omega')$ with

\begin{equation}
[\myG f] (\xi,\alpha)= \int_\RR f\bigl(t(\xi \vartheta(\alpha)+\RD \vartheta^\perp(\alpha))-\RE \vartheta^\perp(\alpha)\bigr)\dd{t},
\end{equation}
 i.e., the operator $\Fanbeam$ in \eqref{equ:Def_Fanbeam_continuous} without the factor $\sqrt{\xi^2+\RD^2}$.
In particular, $\mM\myG =\Fanbeam$ for the continuously invertible multiplication operator $\mM\colon L^2(\Omega')\to L^2(\Omega')$ according to $[\mM g](\xi,\alpha)=\sqrt{\xi^2+\RD^2}g(\xi,\alpha)$ for $(\xi,\alpha) \in \Omega'$. We will first show convergence for 
\begin{multline} \label{equ:Def_Fanbeam_Mod_App}
[\myGApp f](\xi,\alpha)=\frac{1}{\delta_\xi^2} \sum_{p=1}^P\sum_{q=1}^Q \chi_{\Xi_p}(\xi)\chi_{\Phi_q}(\alpha) \sum_{i,j=1}^{N,M} w_{\delta_\xi}\Big(\frac{x_{ij} \cdot \vartheta_q \RD}{x_{ij} \cdot \vartheta^\perp_q+\RE}-\xi_p\Big)
\\
 \cdot \frac{\int_{X_{ij}}f(x)\dd{x}}{x_{ij} \cdot \vartheta^\perp_q+\RE}
\end{multline}
  towards $\myG$. 
  Then, writing $\mM_{\delta_\xi} \myGApp=\FanbeamApp$,
  where
  \[
    [\mM_{\delta_\xi} g] (\xi,\alpha)=g(\xi,\alpha)\sum_{p=1}^P\chi_{\Xi_p}(\xi)\sqrt{\xi^2_p+\RD^2}\] is a piecewise constant version of $\mM$, will eventually enable us to prove convergence of $\FanbeamApp$ to $\Fanbeam$.

We again require an estimate on the modulus of continuity for the fanbeam transform, which we obtain by pulling back to the Radon transform, and to do so, we require an additional result for the Radon transform that is interesting in its own right. The proof of the following lemma can be found in~\cref{app:proof}.

\begin{lemma} \label{lem:radon_angle_mod_continuity}
  Let $f\in L^2(\Omega)$ and $g=\Radon f$. Then, the modulus of continuity for
  $g$ satisfies $\modcont{g}(0,\gamma)^2 \leq c \abs{\gamma \log(\abs{\gamma})}$
  for each $\abs{\gamma} \leq \frac{\pi}{4}$ and
  some constant $c>0$ independent of $\gamma$ and $f$. 
\end{lemma}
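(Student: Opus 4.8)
The plan is to use the same Hilbert-space trick as in the proof of \cref{lem:radon_mod_continuity}: writing $R_\gamma$ for the rotation by $\gamma$ in the angular variable, i.e.\ $[R_\gamma g](s,\vartheta) = g(s, \vartheta + \gamma)$, we have $\norm{R_\gamma g} = \norm{g}$ and hence $\modcont g(0,\gamma)^2 = \norm{R_\gamma g - g}^2 = 2\scp{g - R_\gamma g}{g}$. Plugging in $g = \Radon f$ and using Cauchy--Schwarz gives $\modcont g(0,\gamma)^2 \le 2 \norm{\Radon^*\Radon f - \Radon^* R_\gamma \Radon f}\, \norm f$, so it suffices to bound the operator norm of $\Radon^*\Radon - \Radon^* R_\gamma \Radon$ by $c\,\abs{\gamma \log\abs{\gamma}}$. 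I would compute the kernel of $\Radon^* R_\gamma \Radon$ by the same substitution used before: for $\tilde f \in L^2(\Omega)$,
\[
  \scp{R_\gamma \Radon f}{\Radon \tilde f}
  = \int_{S^1}\int_\RR\int_\RR\int_\RR f\bigl(\vartheta_\gamma s + \vartheta_\gamma^\perp t\bigr)\,\tilde f\bigl(\vartheta s + \vartheta^\perp \tau\bigr)\dd t\dd\tau\dd s\dd\vartheta,
\]
where $\vartheta_\gamma = \vartheta(\varphi+\gamma)$, and then change variables $x = \vartheta s + \vartheta^\perp\tau$, $y = \vartheta_\gamma s + \vartheta_\gamma^\perp t$. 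Unlike the pure-translation case this is not a convolution, but it is still an integral operator $\Radon^* R_\gamma \Radon f(x) = \int_\Omega k_\gamma(x,y) f(y)\dd y$ whose kernel I expect to have the form $k_\gamma(x,y) = \Phi_\gamma(x,y)/\sqrt{\,|x-y|^2 - d_\gamma(x,y)^2\,}$ on the region where the radicand is positive (and $0$ otherwise), where $d_\gamma$ is a "closest approach" type quantity controlled by $\abs{\gamma}\,(\abs x + \abs y)$. For $\gamma = 0$ this recovers $k_0(x,y) = 2/|x-y|$. The Schur/Young estimate then reduces everything to bounding
\[
  M_\gamma = \sup_{x\in\Omega}\int_{|y|\le 2}\bigl|k_0(x,y) - k_\gamma(x,y)\bigr|\dd y.
\]

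The main work, and the main obstacle, is showing $M_\gamma \le c\,\abs{\gamma\log\abs\gamma}$. The logarithm is the tell-tale sign that the singularity $1/|x-y|$, which is integrable in two dimensions, produces a $\log$ when one estimates $\int |1/|x-y| - 1/\sqrt{|x-y|^2 - d^2}|\dd y$ with $d \sim \gamma$: splitting at radius $|x-y|\sim\sqrt\gamma$ or $|x-y|\sim\gamma$, the inner region contributes $\kronO(\gamma)$ (or $\kronO(\gamma\log\gamma)$) from the bare $1/|x-y|$ singularity, while on the outer region a first-order Taylor expansion of $t\mapsto (t^2-d^2)^{-1/2}$ gives a difference of size $\sim d^2/|x-y|^3 \sim \gamma^2/|x-y|^3$, whose integral over $\sqrt\gamma \lesssim |x-y|\lesssim 2$ is $\kronO(\gamma)$ — and the borderline region near $|x-y|\sim\gamma$ is exactly where the $\log$ enters. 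One also has to control the difference between the prefactors $\Phi_\gamma$ and $\Phi_0 \equiv 2$, but these are smooth in $\gamma$ on the relevant domain (recall $|x|,|y|\le 1$ and later $\abs\gamma\le\pi/4$), contributing only an extra $\kronO(\gamma)$ against the integrable $1/|x-y|$ kernel. I would carry out the radial ($|x-y|$) integral in polar coordinates centered at $x$, exactly as in \cref{lem:radon_mod_continuity}, so that the estimate is uniform in $x\in\Omega$ and the restriction $\abs\gamma\le\pi/4$ is only used to keep $d_\gamma(x,y)$ comparable to $\abs\gamma$ and to keep all the geometric quantities in their regular range. Putting these pieces together yields $M_\gamma \le c\abs{\gamma\log\abs\gamma}$ and hence $\modcont g(0,\gamma)^2 \le 4 M_\gamma \norm f^2 \le c\abs{\gamma\log\abs\gamma}\norm f^2$, as claimed.
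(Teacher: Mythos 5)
Your overall route is the paper's route: reduce $\modcont{g}(0,\gamma)^2$ to $\norm{\Radon^*\Radon - \Radon^*R_\gamma\Radon}\,\norm{f}^2$ by the same Hilbert-space identity, compute the kernel of $\Radon^*R_\gamma\Radon$, and bound a Schur-type $L^1$ quantity by $c\abs{\gamma\log\abs{\gamma}}$. The gap is in the kernel computation, and it propagates into your estimates. The angular shift does not produce a square-root kernel of the form $\Phi_\gamma(x,y)/\sqrt{\abs{x-y}^2-d_\gamma(x,y)^2}$: since $x\cdot\vartheta(\varphi)=(A_\gamma x)\cdot\vartheta(\varphi+\gamma)$ (with $A_\gamma$ the rotation by $\gamma$), shifting the angle variable simply rotates the evaluation point, so $[\Radon^*R_\gamma\Radon f](x)=[\Radon^*\Radon f](A_\gamma x)=2\int_\Omega \abs{A_\gamma x-y}^{-1}f(y)\dd{y}$. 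The kernel is the rotated Riesz kernel, singular at the single point $y=A_\gamma x$ and nowhere vanishing, not singular along a circle and not zero inside a disc. Consequently your outer-region estimate is wrong in order: the difference $\bigabs{\abs{x-y}^{-1}-\abs{A_\gamma x-y}^{-1}}$ is genuinely \emph{first} order in $d_x(\gamma)=\abs{A_\gamma x-x}\leq\abs{\gamma}$, bounded by $d_x(\gamma)/\abs{x-y}^{2}$ away from the singularities, and it is exactly the integral of this $1/r^2$ tail over the annulus $d_x(\gamma)\lesssim\abs{x-y}\lesssim 2$ that produces the factor $\log(1/d_x(\gamma))$; the near-singularity balls only contribute $\kronO(d_x(\gamma))$. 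Your proposed second-order decay $d^2/\abs{x-y}^3$ and the attribution of the logarithm to a ``borderline region'' belong to the translation kernel of \cref{lem:radon_mod_continuity} (where they would in fact yield a log-free $\kronO(\gamma)$ bound, as in that lemma) and do not apply here; with the correct kernel your splitting scheme works, but only after replacing that step.

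Two smaller points. First, since $k_\gamma$ is not a difference kernel, Young's inequality is not available; you need the two-sided Schur test, i.e.\ bounds on both $\sup_{x}\int_\Omega\abs{k_0-k_\gamma}\dd{y}$ and $\sup_{y}\int_\Omega\abs{k_0-k_\gamma}\dd{x}$, giving $\norm{\Radon^*\Radon-\Radon^*R_\gamma\Radon}\leq 2\sqrt{M_1(\gamma)M_2(\gamma)}$. This is easily repaired by the symmetry $k_\gamma(x,y)=k_{-\gamma}(y,x)$ (rotations preserve norms), which gives $M_2(\gamma)=M_1(-\gamma)$, but it should be said. Second, the restriction $\abs{\gamma}\leq\frac{\pi}{4}$ is used only to absorb the additive constant into $\abs{\log\abs{\gamma}}$ (via $1+\log 2\leq c_0\abs{\log\abs{\gamma}}$), not to control any geometric degeneracy as your plan suggests.
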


This enables us to derive estimates for the modulus of continuity for the fanbeam transform and $\myG$, as a change of offset in the fanbeam transform corresponds to a change in offset and angle argument of the Radon transform.

\begin{lemma} \label{lem:fanbeam_mod_continuity} 
  Let $f\in L^2(\Omega)$, $g = \Fanbeam f$, $\bar g = \myG f$ %
  and $\delta_\xi \leq 2(\sqrt{2} - 1)$. Then,
  \begin{equation}
    \int_{|t|\leq \delta_\xi} \modcont{g}(t,0)^2\dd{t}\leq c \delta_\xi ^2 |\log(\delta_\xi) | \|f\|^2,
    \quad
    \int_{|t|\leq \delta_\xi} \modcont{\bar g}(t,0)^2\dd{t}\leq c \delta_\xi ^2 |\log(\delta_\xi) | \|f\|^2,
    \label{eq:fanbeam_modcont_integral_est}    
  \end{equation}
  for a constant $c>0$ independent of $\delta_\xi$ and $f$. This constant can be chosen to stay bounded for $R$ bounded and $\RE$ bounded away from $1$.
\end{lemma}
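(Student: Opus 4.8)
The plan is to pull the fanbeam estimate back to the Radon transform using the reparametrization~\eqref{equ:Connection_Radon_fanbeam_arguments}, where a shift in the fanbeam offset $\xi$ corresponds to a simultaneous shift in both the Radon offset $s$ and the Radon angle $\varphi$. Writing $s = s(\xi) = \xi \RE/\sqrt{\xi^2+\RD^2}$ and $\varphi = \varphi(\xi,\alpha) = \alpha - \arctan(\xi/\RD)$, I would first record that both maps are smooth with derivatives bounded above and below on the relevant range (for $|\xi| \le \DW/2$, which is a fixed bounded set depending only on $\RD, \RE$): in particular $|s(\xi+t) - s(\xi)| \le L_1 |t|$ and $|\varphi(\xi+t,\alpha) - \varphi(\xi,\alpha)| \le L_2 |t|$ for constants $L_1, L_2$ that stay bounded for $\RD$ bounded and $\RE$ bounded away from $1$. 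Hence, abbreviating $h = s(\xi+t)-s(\xi)$ and $\gamma = \varphi(\xi+t,\alpha)-\varphi(\xi,\alpha)$, pointwise we have $g(\xi+t,\alpha) - g(\xi,\alpha) = [\Radon f](s+h,\varphi+\gamma) - [\Radon f](s,\varphi)$.

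Next I would integrate $|g(\xi+t,\alpha)-g(\xi,\alpha)|^2$ over $(\xi,\alpha) \in \Omega'$ and perform the change of variables $(\xi,\alpha) \mapsto (s,\varphi)$, whose Jacobian is bounded above and below on the bounded $\xi$-range; this turns $\modcont{g}(t,0)^2$ into something controlled by $\int_{S^1}\int_{\RR} |[\Radon f](s+h,\varphi+\gamma) - [\Radon f](s,\varphi)|^2 \dd s \dd\varphi$ with $|h|, |\gamma| \le C|t|$. Splitting the increment through an intermediate point, $[\Radon f](s+h,\varphi+\gamma) - [\Radon f](s,\varphi) = \bigl([\Radon f](s+h,\varphi+\gamma) - [\Radon f](s,\varphi+\gamma)\bigr) + \bigl([\Radon f](s,\varphi+\gamma) - [\Radon f](s,\varphi)\bigr)$, and using $\|a+b\|^2 \le 2\|a\|^2 + 2\|b\|^2$ together with translation invariance of the $L^2$ norm in $\varphi$, gives $\modcont{g}(t,0)^2 \le C\bigl(\modcont{\Radon f}(h,0)^2 + \modcont{\Radon f}(0,\gamma)^2\bigr)$. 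Then \cref{lem:radon_mod_continuity} bounds the first term by $c|h|\|f\|^2 \le c'|t|\|f\|^2$, and \cref{lem:radon_angle_mod_continuity} bounds the second by $c|\gamma \log|\gamma||\,\|f\|^2 \le c'|t \log|t||\,\|f\|^2$ (using that $r \mapsto r|\log r|$ is monotone near $0$ and $|\gamma| \le C|t|$, absorbing the $\log C$ into constants, valid once $\delta_\xi$ is small enough — this is where the hypothesis $\delta_\xi \le 2(\sqrt 2 - 1)$ enters, guaranteeing $|t|$ and hence $|\gamma|$ stays in the regime where the lemmas apply). Finally, integrating over $|t| \le \delta_\xi$: $\int_{|t| \le \delta_\xi} \modcont{g}(t,0)^2 \dd t \le C\|f\|^2 \int_{|t| \le \delta_\xi} (|t| + |t\log|t||) \dd t \le c\delta_\xi^2|\log\delta_\xi|\,\|f\|^2$, since $\int_0^{\delta_\xi} t|\log t|\dd t = \kronO(\delta_\xi^2|\log\delta_\xi|)$ dominates $\int_0^{\delta_\xi} t \dd t = \delta_\xi^2/2$.

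For the second estimate, concerning $\bar g = \myG f$, I would note that $\myG = \mM^{-1}\Fanbeam$ with $[\mM^{-1} h](\xi,\alpha) = h(\xi,\alpha)/\sqrt{\xi^2+\RD^2}$, so $\bar g(\xi,\alpha) = g(\xi,\alpha)/\sqrt{\xi^2+\RD^2}$; writing $\bar g(\xi+t,\alpha) - \bar g(\xi,\alpha)$ as $\frac{g(\xi+t,\alpha) - g(\xi,\alpha)}{\sqrt{(\xi+t)^2+\RD^2}} + g(\xi,\alpha)\Bigl(\frac{1}{\sqrt{(\xi+t)^2+\RD^2}} - \frac{1}{\sqrt{\xi^2+\RD^2}}\Bigr)$ and using that $\sqrt{\xi^2+\RD^2} \ge \RD$ is bounded below, that $\xi \mapsto (\xi^2+\RD^2)^{-1/2}$ is Lipschitz with constant $\le 1/\RD^2$ on the bounded range, and that $\|g\|_{L^2(\Omega')} = \|\Fanbeam f\| \le C\|f\|$ (continuity of $\Fanbeam$), yields $\modcont{\bar g}(t,0)^2 \le C(\modcont{g}(t,0)^2 + t^2\|f\|^2)$, and integrating over $|t| \le \delta_\xi$ gives the same bound $c\delta_\xi^2|\log\delta_\xi|\,\|f\|^2$ (the extra $\int t^2 \dd t = \kronO(\delta_\xi^3)$ term is absorbed). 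Throughout, every constant is built from $L_1, L_2$, the Jacobian bounds, $\|\Fanbeam\|$, $1/\RD$, and the constants of the two cited lemmas, all of which remain bounded for $\RD$ bounded and $\RE$ bounded away from $1$, giving the final claim. The main obstacle I anticipate is organizing the change of variables cleanly — tracking that a pure $\xi$-shift induces coupled $(h,\gamma)$ increments and that the Jacobian of $(\xi,\alpha)\mapsto(s,\varphi)$ is uniformly two-sided bounded on the bounded $\xi$-range — together with the care needed to keep all constants explicitly under control in terms of $\RD$ and $\RE - 1$.
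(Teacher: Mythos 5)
Your overall strategy is the same as the paper's: pull the estimate back to the Radon transform via \eqref{equ:Connection_Radon_fanbeam_arguments}, split the increment into an offset shift plus an angle shift, and invoke \cref{lem:radon_mod_continuity} and \cref{lem:radon_angle_mod_continuity}; the treatment of $\bar g = \myG f$ via boundedness and Lipschitz continuity of $\xi \mapsto (\xi^2+\RD^2)^{-1/2}$ also matches the paper. However, there is a genuine gap at the central step. For fixed $t$, the induced increments $h = s(\xi+t)-s(\xi)$ and $\gamma = \arctan(\xi/\RD)-\arctan((\xi+t)/\RD)$ are \emph{functions of $\xi$} (equivalently of $s$), not constants. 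After your two-dimensional change of variables $(\xi,\alpha)\mapsto(s,\varphi)$ you are therefore facing a variable-shift quantity of the form $\int_{S^1}\int_\RR \abs{[\Radon f](s+h(s),\varphi+\gamma(s)) - [\Radon f](s,\varphi)}^2 \dd{s}\dd{\varphi}$, and your claimed bound $\modcont{g}(t,0)^2 \leq C\bigl(\modcont{\Radon f}(h,0)^2 + \modcont{\Radon f}(0,\gamma)^2\bigr)$ with $\abs{h},\abs{\gamma} \leq C\abs{t}$ does not follow: an $L^2$ increment with an $s$-dependent shift is in general \emph{not} controlled by the modulus of continuity at fixed shifts, not even by $\sup_{\abs{h}\le C\abs{t}}\modcont{\Radon f}(h,0)$. (Take $G = \chi_{[0,\varepsilon]}$ in one variable with $\varepsilon \ll H$: every fixed shift $\abs{h}\le H$ gives increment energy at most $2\varepsilon$, while a shift $h(s)$ with $s+h(s)\in[0,\varepsilon]$ on $[-H,0]$ gives energy at least $H$.) Also the proof of \cref{lem:radon_mod_continuity} itself relies on the translation being an isometry, so it cannot be rerun with a variable shift.

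This is precisely why the lemma is stated as an integral over $\abs{t}\le\delta_\xi$ rather than pointwise in $t$, and why the paper does not fix $t$: it performs the three-dimensional change of variables $T\colon(\xi,\alpha,t)\mapsto(s,\varphi,h)$ with $h = s(\xi+t)-s(\xi)$ (with Jacobian bounded above and, on the essential support of the integrand, below), splits the increment as you do, uses that $\gamma(s,\varphi,h)$ is independent of $\varphi$ so the offset term reduces by translation invariance in $\varphi$ to $\int_{\abs{h}\le\delta_\xi}\modcont{\Radon f}(h,0)^2\dd{h}$, and then changes variables once more, $(s,\varphi,h)\mapsto(s,\varphi,\gamma)$, to turn the angle term into $\int_{\abs{\gamma}\le\delta_\xi'}\modcont{\Radon f}(0,\gamma)^2\dd{\gamma}$ with $\delta_\xi' = 2\arctan(\delta_\xi/2)\le\frac{\pi}4$. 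In other words, the shifts must be made into integration variables before the two cited lemmas are applied; your sketch can be repaired along these lines (equivalently, by Fubini: integrate in $t$ first and substitute $t\mapsto h$, respectively $t\mapsto\gamma$, for fixed $(s,\varphi)$, keeping track of the two-sided Jacobian bounds in terms of $\RD$ and $\RE-1$), but as written the pointwise-in-$t$ inequality you rely on is unjustified.
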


\begin{proof}
We use the relation $[\Fanbeam f](\xi,\alpha)=[\Radon f](s,\varphi)$ in \eqref{equ:Connection_Radon_fanbeam_arguments} and the notation $s = s(\xi)$, $\varphi = \varphi(\xi, \alpha)$ to compute
\begin{multline}\label{eq:fanbeam_modcont_radon}
\int_{|t|\leq \delta_\xi} \modcont g(t,0)^2 \dd{t} 
\\ 
= \int_{|t|\leq \delta_\xi}\int_\RR \int_{[-\pi,\pi[} \bigl|[\Radon f]\bigl(s(\xi+t),\varphi(\xi+t, \alpha) \bigr) -[\Radon f]\bigl(s(\xi),\varphi(\xi, \alpha) \bigr) \bigr|^2  \dd\alpha \dd \xi\dd{t}.
\end{multline}
Note that since $\supp f$ is essentially contained in $B(0,1)$ and $\delta_\xi \leq 2(\sqrt{2} - 1) \leq \frac{\DW}2$, the support of the integrand is essentially contained in ${]{-W,W}[} \times S^1$ for each fixed $\abs{t} \leq \delta_{\xi}$.
We now consider the transformation $T: (\xi,\alpha,t) \mapsto (s,\varphi,h)$ with $h=s(\xi+t)-s(\xi)$ which is a diffeomorphism mapping $\RR \times S^1 \times {]{-\delta_\xi, \delta_\xi}[}$ to the set
\[
  \Lambda_{\delta_\xi} = \bigset{(s,\varphi,h)}{\abs{s} < \RE, \ \varphi \in {[{-\pi,\pi}[}, \
  s\bigl(\xi(s) - \delta_\xi\bigr) < s + h < s\bigl(\xi(s) + \delta_\xi\bigr)},
\]
where $\xi(s) = R\frac{s}{\sqrt{\RE^2 - s^2}}$ denotes the inverse of $\xi \mapsto s(\xi)$. Since $\RE \leq R$, one easily deduces that the derivative of $\xi \mapsto s(\xi)$ satisfies $\abs{s'(\xi)} \leq 1$ such that $\Lambda_{\delta_\xi} \subset {]{-\RE,\RE}[} \times S^1 \times {]{-\delta_\xi,\delta_\xi}[}$. Further, the transformation determinant of $T$ is given by
\[
  \abs{\det \nabla T(\xi,\alpha,t)} = \frac{\RE^2 R^4}{\bigl(\xi^2 + R^2\bigr)^{3/2} \bigl((\xi + t)^2 + R^2\bigr)^{3/2}},
\]
which is bounded from above by $1$, again since $\RE \leq R$. For $\abs{\xi} < \frac{W}2$ and $\abs{t} < \frac{W}2$, we obtain the lower bound $\abs{\det \nabla T(\xi,\alpha,t)}
\geq \RE^6 /\bigl( W^2 + R^2 \bigr)^3  > 0$ which holds in particular on the
essential support of the integrand in~\cref{eq:fanbeam_modcont_radon}.
Thus, denoting by $\gamma(s,\varphi,h) = \varphi\bigl(\xi(s) + t(s,h), \alpha(s,\varphi) \bigr) - \varphi$ where $\alpha(s,\varphi) = \varphi + \arctan\bigl(\frac{\xi(s)}{R} \bigr)$ and $t(s,h) = \xi(s + h) - \xi(s)$, we get, for some $c > 0$ that
\begin{align} \notag
&\int_{|t|\leq \delta_\xi} \modcont g(t,0)^2 \dd{t}
                        = \int_{\Lambda_{\delta_\xi}} \frac{|[\Radon f] (s+h,\varphi +\gamma(s,\varphi,h))-[\Radon f] (s,\varphi)|^2}{\bigabs{\det \nabla T\bigl(\xi(s), \alpha(s,\varphi), t(s,h)\bigr)}} \dd{(s,\varphi,h)}
\\ \notag 
 &\leq c \int_{|h| \leq \delta_{\xi}} \int_\mathbb{R} \int_{[-\pi,\pi[}  |[\Radon f](s+h,\varphi +\gamma(s,\varphi,h))-[\Radon f] (s,\varphi +\gamma(s,\varphi,h))|^2\dd{\varphi}\dd{s}\dd{h}
\\
  & \quad +c %
    \int_{\Lambda_{\delta_{\xi}}} 
    |[\Radon f] (s,\varphi +\gamma(s,\varphi,h))-[\Radon f] (s,\varphi)|^2
    \dd{(s,\varphi,h)}.
    \label{eq:fanbeam_modcont_1}
\end{align}
The first integral does not change when $\varphi + \gamma(s,\varphi,h)$ is replaced by $\varphi$ and thus amounts to $\int_{\abs{h} \leq \delta_\xi} \modcont{\Radon f}(h,0)^2 \dd{h}$. For the second integral, which only needs to be considered for $\abs{s} < 1$, we change the coordinates according to $(s,\varphi,h) \mapsto (s,\varphi,\gamma)$ where one computes $\gamma  = \gamma(s,\varphi,h) = \arctan \bigl(\frac{\xi(s)}R \bigr) - \arctan\bigl(\frac{\xi(s+h)}R\bigr)$. Clearly,
this is a diffeomorphism between
$\Lambda_{\delta_\xi}$
and
  \begin{align*}
    \Lambda'_{\delta_\xi} = \bigl\{(s,\varphi,\gamma)\,
    \colon\,&\abs{s} < \RE,\ \varphi \in [-\pi,\pi[,
    \\
&    \arctan\bigl(\tfrac{\xi(s) - \delta_\xi}R\bigr) < \arctan\bigl(\tfrac{\xi(s)}R\bigr) - \gamma < \arctan\bigl(\tfrac{\xi(s) + \delta_\xi}R\bigr)\bigr\}.
  \end{align*}
Also here, one can see that $\Lambda_{\delta_\xi}' \subset {]{-\RE,\RE}[} \times S^1 \times {]{-\delta_{\xi}', \delta_{\xi}'}[}$
where $\delta_{\xi}' = 2\arctan(\frac{\delta_\xi}2)$. The transformation determinant can further be computed as $\frac{R^2}{\xi(s+h)^2 + R^2} \frac{\RE^2}{(\RE^2 - (s+h)^2)^{3/2}}$ which is, for
$(s,\varphi,h) \in \Lambda_{\delta_\xi}$ with $\abs{s} < 1$, bounded with a positive lower bound. Hence, we can estimate, for some $c' > 0$,
\begin{multline} 
  \int_{\Lambda_{\delta_{\xi}}} 
  |[\Radon f] (s,\varphi +\gamma(s,\varphi,h))-[\Radon f] (s,\varphi)|^2
  \dd{(s,\varphi,h)}
  \\ 
  \leq c' \int_{\abs{\gamma} \leq \delta_{\xi}'} \int_{\abs{s} \leq 1}
  \int_{[-\pi,\pi[} |[\Radon f] (s,\varphi +\gamma)-[\Radon f] (s,\varphi)|^2
  \dd{\varphi} \dd{s} \dd{\gamma}
  \\ \label{eq:fanbeam_modcont_2} = c' \int_{\abs{\gamma} \leq \delta_{\xi}'} \modcont{\Radon f}(0,\gamma)^2 \dd{\gamma}.
\end{multline}
Since $\delta_{\xi} \leq 2(\sqrt{2} - 1)$, we have
$\delta_\xi' \leq \frac{\pi}4$, so
\cref{lem:radon_angle_mod_continuity} can be applied for each $\abs{\gamma} \leq \delta_{\xi}'$. Combining this, \cref{lem:radon_mod_continuity} as well as \cref{eq:fanbeam_modcont_1} and \cref{eq:fanbeam_modcont_2}, and possibly enlarging $c$ yields
  \begin{align*}
  \int_{\abs{t} \leq \delta_\xi} \modcont{g}(t,0)^2 \dd{t} &\leq c \Bigl( \int_{\abs{h} \leq \delta_\xi} \abs{h} \dd{h} + \int_{\abs{\gamma} \leq \delta_\xi'} \abs{\gamma} \abs{\log(\abs{\gamma})} \dd{\gamma} \Bigr) \norm{f}^2 \\ &\leq c \bigl(
  \delta_\xi^2 + (\delta_\xi')^2 + (\delta_\xi')^2\abs{\log(\delta_\xi')} \bigr) \norm{f}^2.
  \end{align*}
As $\delta_\xi \leq 2(\sqrt{2} - 1) < 1$,
we can find $c'' > 0$ independent of $\delta_\xi$ such that $1 \leq c'' \abs{\log(\delta_\xi)}$. With $\delta_{\xi}' \leq \delta_\xi$ and possibly enlarging $c$ once more, we arrive 
at the first estimate in \cref{eq:fanbeam_modcont_integral_est}.  
Concerning the second estimate,
we observe that the function given by $\mu(\xi)=\frac {1} {\sqrt{\xi^2+\RD^2}}$ is  %
bounded and Lipschitz continuous in 
${]{-\DW,\DW}[}$. Since we have
\begin{multline} \label{equ:proof_mod_cont_equivalence}
  [\myG f](\xi+t,\alpha)-[\myG f](\xi,\alpha)= \mu (\xi+t)[\Fanbeam f](\xi+t,\alpha)-\mu(\xi)[\Fanbeam f](\xi,\alpha)
\\=\left(\mu(\xi+t)-\mu(\xi)\right)[\Fanbeam f](\xi+t,\alpha)-\mu(\xi)\left([\Fanbeam] f(\xi,\alpha)- [\Fanbeam f] (\xi+t,\alpha)\right),
\end{multline}
we can find a $c''' > 0$ such that $\modcont{\bar g}(t,0)^2 \leq c''' (t^2 \norm{g}^2 + \modcont{g}(t,0)^2)$. Integration over $\abs{t} \leq \delta_\xi$, estimating $\norm{g}^2 \leq \norm{\Fanbeam}^2 \norm{f}^2$ and possibly enlarging $c$ then leads to the second estimate of~\eqref{eq:fanbeam_modcont_integral_est}.

Finally, we observe that when $R$ is bounded and $\RE$ is bounded away from $1$, then $W$ stays bounded which enables us to choose $c$ in each of the above steps in a bounded way.
\end{proof}

With these results we can consider a discretization in the offset parameter and approximation via an area integral resulting in 
\begin{align}
[\myGS f](\xi,\alpha)&= \frac{1}{\delta_\xi^2} \sum_{p=1}^P \chi_{\Xi_p}(\xi)\int_\RR w_{\delta_\xi}(t-\xi_p)[\myG f](t,\alpha)\dd{t} \notag
\\
&=\frac{1}{\delta_\xi^2} \sum_{p=1}^P \chi_{\Xi_p}(\xi)\int_\Omega w_{\delta_\xi}\Big(\frac{x \cdot \vartheta(\alpha) \RD}{x \cdot \vartheta^\perp(\alpha)+\RE}-\xi_p\Big)\frac{f(x)}{x \cdot \vartheta^\perp(\alpha)+\RE}\dd{x}.
\end{align}
\begin{lemma} \label{lem:F_detector_discretization}
  For $\delta_\xi \leq \frac43 (\sqrt{2} - 1)$, we have $\|\myG-\myGS\| \leq c \sqrt{\delta_\xi}  |\log(\delta_\xi)|^\frac{1}{2}$
  with the constant $c$ being independent of $\delta_\xi$.
\end{lemma}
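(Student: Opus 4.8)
The plan is to replay the argument of \cref{lem:radon_detector_discretization}, with $\myG$ in the role of $\Radon$ and the \emph{integral} modulus-of-continuity bound of \cref{lem:fanbeam_mod_continuity} replacing the pointwise bound of \cref{lem:radon_mod_continuity}. Since $\int_\RR w_{\delta_\xi}(\tau) \dd{\tau} = \delta_\xi^2$, for $\xi \in \Xi_p$ one has
\[
  [\myGS f](\xi,\alpha) - [\myG f](\xi,\alpha) = \frac1{\delta_\xi^2} \int_\RR w_{\delta_\xi}(t - \xi_p)\bigl([\myG f](t,\alpha) - [\myG f](\xi,\alpha)\bigr) \dd{t},
\]
so the difference is a weighted average of offset increments of $\myG f$. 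Applying Jensen's inequality exactly as in the proof of \cref{lem:radon_detector_discretization}, I obtain
\[
  \norm{\myGS f - \myG f}^2 \leq \frac1{\delta_\xi} \int_{S^1} \int_\RR \int_\RR \sum_{p=1}^P \chi_{\Xi_p}(\xi)\,\frac{w_{\delta_\xi}(t - \xi_p)}{\delta_\xi}\, \bigabs{[\myG f](t,\alpha) - [\myG f](\xi,\alpha)}^2 \dd{t} \dd{\xi} \dd{\alpha}.
\]

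Next I would observe that for fixed $\xi$ there is at most one $p$ with $\xi \in \Xi_p$, and that for this $p$ we have $\abs{\xi - \xi_p} < \delta_\xi/2$, while $w_{\delta_\xi}(t - \xi_p) \neq 0$ forces $\abs{t - \xi_p} < \delta_\xi$; together these give $\abs{t - \xi} < \tfrac32\delta_\xi$, and using $w_{\delta_\xi} \leq \delta_\xi$ the sum over $p$ is bounded by $\chi_{\{\abs{t - \xi} < \frac32\delta_\xi\}}$. Substituting $h = t - \xi$ then collapses the right-hand side, just as in \cref{equ:Proof_lastequation_2.2}, to
\[
  \norm{\myGS f - \myG f}^2 \leq \frac1{\delta_\xi} \int_{\abs{h} < \frac32\delta_\xi} \modcont{\myG f}(h,0)^2 \dd{h}.
\]
Now I invoke \cref{lem:fanbeam_mod_continuity} with $\tfrac32\delta_\xi$ in place of $\delta_\xi$; this is exactly where the hypothesis $\delta_\xi \leq \tfrac43(\sqrt2 - 1)$ is used, since it guarantees $\tfrac32\delta_\xi \leq 2(\sqrt2 - 1)$. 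The second estimate in~\eqref{eq:fanbeam_modcont_integral_est} gives $\int_{\abs{h} < \frac32\delta_\xi} \modcont{\myG f}(h,0)^2 \dd{h} \leq c\,(\tfrac32\delta_\xi)^2\,\bigabs{\log(\tfrac32\delta_\xi)}\,\norm{f}^2$, and since $\tfrac32\delta_\xi < 1$ we have $\bigabs{\log(\tfrac32\delta_\xi)} = \bigabs{\log(\delta_\xi)} - \log\tfrac32 \leq \bigabs{\log(\delta_\xi)}$, so the factors $\tfrac32$ and $\log\tfrac32$ can be absorbed into the constant, yielding $\norm{\myGS f - \myG f}^2 \leq c\,\delta_\xi\,\bigabs{\log(\delta_\xi)}\,\norm{f}^2$ and hence $\norm{\myG - \myGS} \leq c\,\sqrt{\delta_\xi}\,\abs{\log(\delta_\xi)}^{1/2}$; moreover, the constant inherits boundedness for $\RD$ bounded and $\RE$ bounded away from $1$ from \cref{lem:fanbeam_mod_continuity}.

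I do not expect a genuine obstacle here: all the real work is concentrated in \cref{lem:fanbeam_mod_continuity} (the diffeomorphic change of variables to the Radon picture and the logarithmic singularity of $\modcont{\Radon f}(0,\gamma)$ from \cref{lem:radon_angle_mod_continuity}), which has already been carried out. The only points requiring care are bookkeeping: keeping the \emph{integral} $\frac1{\delta_\xi}\int_{\abs{h} < \frac32\delta_\xi}\modcont{\myG f}(h,0)^2\dd{h}$ rather than a supremum (for $\myG f$ we dispose only of the integral bound of \cref{lem:fanbeam_mod_continuity}, not a pointwise one), correctly tracking the normalization $\int_\RR w_{\delta_\xi} = \delta_\xi^2$, verifying that $\tfrac32\delta_\xi$ stays in the admissible range of \cref{lem:fanbeam_mod_continuity}, and noting that $\myGS f - \myG f$ is supported in the bounded strip $\Omega'$ so that the $\xi$-integrals over $\RR$ are effectively over a bounded set.
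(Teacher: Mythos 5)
Your proposal is correct and follows essentially the same route as the paper: the paper's proof likewise repeats the argument of \cref{lem:radon_detector_discretization} up to the bound $\norm{\myG f-\myGS f}^2 \leq \frac{1}{\delta_\xi}\int_{\abs{h}\leq \frac32\delta_\xi}\modcont{\myG f}(h,0)^2\dd{h}$ and then invokes the second estimate of \cref{lem:fanbeam_mod_continuity} (valid since $\tfrac32\delta_\xi \leq 2(\sqrt2-1)$), absorbing the factors of $\tfrac32$ into the constant. Your additional bookkeeping remarks (integral rather than supremum bound, the normalization of $w_{\delta_\xi}$, and $\abs{\log(\tfrac32\delta_\xi)}\leq\abs{\log(\delta_\xi)}$) are accurate and consistent with the paper.
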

\begin{proof}
  The proof works out in a way that is analogous to the proof of~\cref{lem:radon_detector_discretization} up to the last line \cref{equ:Proof_lastequation_2.2}, leading to
\begin{equation} \label{equ:proof_modulus_to_estimate}
  \|\myG f-\myGS f\|^2 \leq \frac{1}{\delta_\xi} \int_{|h|\leq \frac{3}{2}\delta_\xi} \modcont{\myG f}(h,0)^2 \dd{h}
    \leq \tfrac94 c \delta_\xi \abs{\log(\tfrac32 \delta_\xi)}
  \norm{f}^2,
\end{equation} 
  the latter since $\tfrac32 \delta_\xi \leq 2 (\sqrt{2} - 1)$ and
  consequently, \cref{lem:fanbeam_mod_continuity} can be applied. This
  implies the desired estimate.
\end{proof}

\begin{lemma}
\label{lem:fanbeam_detector_discretization_Backprojection}
For $g \in L^2(\Omega')$ the adjoint of $\myGS$ is 
\begin{equation*}
[(\myGS)^*g](x)=\frac{1}{\delta_\xi^2}\sum_{p=1}^P \int_{S^1} w_{\delta_\xi}\left(\frac{x \cdot \vartheta \RD}{x \cdot \vartheta^\perp +\RE}-\xi_p \right ) \frac{1}{x \cdot \vartheta^\perp +\RE} \int_{\Xi_p}g(\xi,\vartheta) \dd{\xi} \dd{\vartheta},
\end{equation*}
and the approximation error for the adjoint applied to $g$ can be estimated by
\begin{equation}
\norm{(\myGS)^* g - \myG^* g}\leq C \sup_{\abs{h} < \frac32 \delta_\xi} \modcont g(h,0).
\end{equation}
\end{lemma}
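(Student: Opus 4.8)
The plan is to mirror the proof of \cref{lem:radon_detector_discretization_Backprojection} while tracking the extra weight and coordinate-change factors that arise in the fanbeam geometry. First I would verify the stated representation of $(\myGS)^*$: applying Fubini and the definition of $\myGS$, the adjoint simply moves the weight $w_{\delta_\xi}(\frac{x\cdot\vartheta\RD}{x\cdot\vartheta^\perp+\RE}-\xi_p)$ and the factor $\frac{1}{x\cdot\vartheta^\perp+\RE}$ outside the $\xi$-integration over $\Xi_p$, which is routine. Likewise the continuous adjoint $\myG^*$ is obtained by the same change of coordinates $x = t(\xi\vartheta + \RD\vartheta^\perp) - \RE\vartheta^\perp$ that was used in the fanbeam remark, giving $[\myG^* g](x) = \int_{S^1} \frac{1}{x\cdot\vartheta^\perp+\RE} g\bigl(\frac{x\cdot\vartheta\RD}{x\cdot\vartheta^\perp+\RE},\vartheta\bigr)\dd\vartheta$.

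Next I would form the difference $[(\myGS)^* g](x) - [\myG^* g](x)$ and, exactly as in the Radon case, use that for $x \in \Omega$ (so $\abs{x\cdot\vartheta^\perp+\RE} \geq \RE - 1 > 0$ is bounded away from zero and above, and $\abs{\frac{x\cdot\vartheta\RD}{x\cdot\vartheta^\perp+\RE}} < \frac{\DW}{2}$ lands in the detector range) we have $\frac1{\delta_\xi}\sum_p w_{\delta_\xi}(\frac{x\cdot\vartheta\RD}{x\cdot\vartheta^\perp+\RE}-\xi_p) = 1$ and $\frac1{\delta_\xi}\int_{\Xi_p} 1\dd\xi = 1$. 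This lets me write the difference, for each fixed $x$ and $\vartheta$, as a weighted average over $p$ of $\frac{1}{x\cdot\vartheta^\perp+\RE}\bigl(\frac{1}{\delta_\xi}\int_{\Xi_p} g(\xi,\vartheta)\dd\xi - g(\frac{x\cdot\vartheta\RD}{x\cdot\vartheta^\perp+\RE},\vartheta)\bigr)$. Pulling the bounded factor $\frac{1}{x\cdot\vartheta^\perp+\RE} \leq \frac{1}{\RE-1}$ out and applying Cauchy--Schwarz in $\vartheta$ and Jensen in $p$ (noting $w_{\delta_\xi}(\frac{x\cdot\vartheta\RD}{x\cdot\vartheta^\perp+\RE}-\xi_p) \neq 0$ and $\xi \in \Xi_p$ force $\abs{\frac{x\cdot\vartheta\RD}{x\cdot\vartheta^\perp+\RE} - \xi} < \frac32\delta_\xi$), I reduce to $\norm{(\myGS)^* g - \myG^* g}^2 \leq C\int_\Omega \int_{S^1} \bigabs{\frac1{\delta_\xi}\int_{\abs{h}<\frac32\delta_\xi}\abs{g(\eta(x,\vartheta)+h,\vartheta) - g(\eta(x,\vartheta),\vartheta)}\dd h}^2 \dd\vartheta\dd x$, writing $\eta(x,\vartheta) = \frac{x\cdot\vartheta\RD}{x\cdot\vartheta^\perp+\RE}$.

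Then I would perform the same change of variables used in the Radon proof to eliminate the $x$-dependence: fix $\vartheta$ and substitute $x = t(\eta \vartheta + \RD\vartheta^\perp) - \RE\vartheta^\perp$ as above, whose Jacobian $\frac{1}{x\cdot\vartheta^\perp+\RE}$ is again bounded above and below on $\Omega$; this turns the $x$-integral (restricted to $\Omega$) into an integral over the detector offset $\eta$ and the transversal variable $t$, with the $t$-integral contributing only a bounded constant. After interchanging integration order and applying Cauchy--Schwarz once more on the inner $h$-average, I arrive at $\norm{(\myGS)^* g - \myG^* g}^2 \leq \frac{C}{\delta_\xi}\int_{\abs{h}<\frac32\delta_\xi}\modcont g(h,0)^2\dd h \leq C \sup_{\abs{h}<\frac32\delta_\xi}\modcont g(h,0)^2$, which is the claim. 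The main obstacle is purely bookkeeping: one must be careful that after the substitution the effective detector offsets stay within ${]{-\DW,\DW}[}$ where the weight $\mu$-type factors and the Jacobian are uniformly controlled, and that the constant $C$ genuinely only depends on $\RE - 1$, $\RD$ and $\DW$ — i.e., stays bounded when $\RE$ is bounded away from $1$ and $\RD$ bounded — exactly as flagged in \cref{lem:fanbeam_mod_continuity}; no new analytic ingredient beyond those already used for the Radon adjoint is needed.
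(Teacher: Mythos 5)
Your proposal is correct and follows essentially the same route as the paper's proof: verify the adjoint by a routine computation, bound the weight $\frac{1}{x\cdot\vartheta^\perp+\RE}$ by $\frac{1}{\RE-1}$, use the partition-of-unity properties of $w_{\delta_\xi}$ together with Cauchy--Schwarz and Jensen, then change variables $x = t(\xi\vartheta+\RD\vartheta^\perp)-\RE\vartheta^\perp$ (with uniformly controlled Jacobian) to reduce the error to $\frac{C}{\delta_\xi}\int_{\abs{h}\le\frac32\delta_\xi}\modcont{g}(h,0)^2\dd{h}$. The bookkeeping of constants you flag is exactly what the paper tracks, so no gap remains.
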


\begin{proof}
The representation of the adjoint of $\myGS$ can readily be shown via simple computation.
The proof for the error estimate works analogously to \cref{lem:radon_detector_discretization_Backprojection}, though some computations are more involved, thus we only give a brief overview of the intermediary results of the proof. Using the Cauchy--Schwarz inequality, Jensen's inequality as well as $\frac{1}{x \cdot \vartheta^\perp +\RE} \leq \frac{1}{\RE-1}$, one gets
\begin{align*}
\norm{(\myGS)^* g - \myG^* g}^2
&\overset{}{\underset{}{\leq}}
\frac{2 \pi }{|\RE-1|^2}\int_\Omega\int_{S^1} \sum_{p=1}^P \frac{1}{\delta_\xi}w_{\delta_\xi}\left(\frac{x \cdot \vartheta \RD}{x \cdot \vartheta^\perp +\RE}-\xi_p \right )
\\ &\qquad \qquad \qquad  \cdot\left | \frac{1}{\delta_\xi} \int_{\Xi_p} g\left(\frac{x \cdot \vartheta \RD}{x \cdot \vartheta^\perp +\RE}, \vartheta\right)- g(\xi,\vartheta) \dd{\xi} \right|^2 \dd \vartheta  \dd{x}.
\end{align*} 
One can apply the Cauchy--Schwarz inequality once more to %
estimate by the square of the term in the
innermost integral. Moreover, note that $\big|\frac{x \cdot \vartheta \RD}{x \cdot \vartheta^\perp +\RE}-\xi \big|\leq \frac{3}{2}\delta_\xi$ for $\xi \in \Xi_p$ and $w_{\delta_\xi}\bigl(\frac{x \cdot \vartheta \RD}{x \cdot \vartheta^\perp +\RE}-\xi_p \bigr )\neq 0$, thus one substitutes $\xi=\frac{x \cdot \vartheta \RD}{x \cdot \vartheta^\perp +\RE}+h$ for $h$, and $x=t(\xi\vartheta+\RD \vartheta^\perp)-\RE \vartheta^\perp$ for $(\xi,t)$ with transformation determinant $Rt$ leading to
\begin{multline*}
  \norm{(\myGS)^* g - \myG^* g}^2 \leq  \frac{2 \pi R}{|\RE-1|^2\delta_\xi }
  \int_\RR
  \int_{0}^1 t \dd t \int_{S^1}  \sum_{p=1}^P \frac{1}{\delta_\xi}w_{\delta_\xi}\left(\xi-\xi_p \right ) \\ \cdot \int_{|h|\leq \frac{3}{2}\delta_\xi} \left|g\left(\xi+h, \vartheta\right)- g(\xi,\vartheta)\right|^2 \dd{h}\dd \vartheta \dd{\xi} . 
\end{multline*}
Using that $\sum_{p=1}^P \frac {1} {\delta_\xi} w_{\delta_\xi}(\cdot-\xi_p) \leq1$ and %
interchanging integration order yields
\begin{equation*}
\norm{(\myGS)^* g - \myG^* g}^2 
\leq \frac{\pi R}{|\RE-1|^2 \delta_\xi} \int_{|h|\leq \frac{3}{2}\delta_\xi} \modcont g(h,0)^2 \dd{h}.
\end{equation*}
This implies the statement of the lemma.
\end{proof}

Next, we also discretize the angle dimension leading to
\begin{multline}
[\myGPhi f](\xi,\alpha)=\frac{1}{\delta_\xi^2} \sum_{p=1}^P\sum_{q=1}^Q \chi_{\Xi_p}(\xi)\chi_{\Phi_q}(\alpha) \int_\RR w_{\delta_\xi}(t-\xi_p) [\myG f](t,\alpha_q) \dd{t}
\\
=\frac{1}{\delta_\xi^2} \sum_{p=1}^P\sum_{q=1}^Q \chi_{\Xi_p}(\xi)\chi_{\Phi_q}(\alpha)\int_\Omega w_{\delta_\xi}\Big(\frac{x \cdot \vartheta_q \RD}{x \cdot \vartheta^\perp_q+\RE}-\xi_p\Big)
\frac{f(x)}{x \cdot \vartheta^\perp_q+\RE} \dd{x}.
\end{multline} 
\begin{lemma} \label{lem:F_detector_angular_discretization}
  For $\delta_\xi \leq 1$, we have $\|\myGPhi- \myGS\|\leq c \frac{\delta_\alpha}{\delta_\xi}$ for some constant $c>0$ independent of $\delta_\xi$ and $\delta_\alpha$ that remains bounded for $R$ bounded and $\RE$ bounded away from $1$.
\end{lemma}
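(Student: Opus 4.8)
The plan is to mimic the proof of~\cref{lem:radon_detector_angular_discretization}; the only genuinely new feature is that the fanbeam offset $\beta_\alpha(x) := \frac{x \inprod \vartheta(\alpha)\RD}{x\inprod\vartheta^\perp(\alpha)+\RE}$, which here plays the role that $x\inprod\vartheta$ played there, depends nonlinearly on $\alpha$, and that the integrand carries an extra $\alpha$-dependent weight $\mu_\alpha(x) := \frac1{x\inprod\vartheta^\perp(\alpha)+\RE}$.

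First I would reduce the operator norm to a detector-node sum: since both $\myGPhi f$ and $\myGS f$ are constant in $\xi$ on each $\Xi_p$ (depending there only on $\xi_p$), one has $\norm{\myGPhi f - \myGS f}^2 = \delta_\xi \sum_{p=1}^P \int_{S^1} \bigabs{[\myGPhi f](\xi_p,\alpha) - [\myGS f](\xi_p,\alpha)}^2 \dd\alpha$. Fixing $p$, writing $W_p^\alpha(x) := w_{\delta_\xi}(\beta_\alpha(x)-\xi_p)\,\mu_\alpha(x)$ and $W_p^q := W_p^{\alpha_q}$, the difference at $(\xi_p,\alpha)$ with $\alpha\in\Phi_q$ equals $\frac1{\delta_\xi^2}\int_\Omega \bigl(W_p^q(x)-W_p^\alpha(x)\bigr)f(x)\dd x$, and the Cauchy--Schwarz inequality in $x$, exactly as in the Radon proof, bounds its square by $\frac1{\delta_\xi^4}\bigl(\int_\Omega\abs{W_p^q(x)-W_p^\alpha(x)}\dd x\bigr)\bigl(\int_\Omega\abs{W_p^q(x)-W_p^\alpha(x)}\abs{f(x)}^2\dd x\bigr)$.

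Both factors I would control by passing to the arc-derivative, $W_p^q(x)-W_p^\alpha(x) = -\int_{\alpha_q}^{\alpha}\partial_\sigma W_p^\sigma(x)\dd\sigma$, via the bound $\abs{\partial_\sigma W_p^\sigma(x)} \le C_1\abs{w_{\delta_\xi}'(\beta_\sigma(x)-\xi_p)} + C_2\, w_{\delta_\xi}(\beta_\sigma(x)-\xi_p)$, where one uses $\abs{\partial_\sigma\beta_\sigma(x)} = \RD\,\frac{\bigabs{\abs{x}^2+\RE\,(x\inprod\vartheta^\perp(\sigma))}}{(x\inprod\vartheta^\perp(\sigma)+\RE)^2}\le\frac{\RD(1+\RE)}{(\RE-1)^2}$ and $\abs{\partial_\sigma\mu_\sigma(x)}\le\frac1{(\RE-1)^2}$, both valid because $\abs{x}\le1<\RE$ on $\Omega$, so that $C_1=\frac{\RD(1+\RE)}{(\RE-1)^3}$ and $C_2=\frac1{(\RE-1)^2}$. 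The two geometric inputs are then: (i) the ``curved-strip'' estimates $\int_\Omega\abs{w_{\delta_\xi}'(\beta_\sigma(x)-\xi_p)}\dd x \le \frac{4\RE}{\RD}\,\delta_\xi$ and $\int_\Omega w_{\delta_\xi}(\beta_\sigma(x)-\xi_p)\dd x \le \frac{2\RE}{\RD}\,\delta_\xi^2$, obtained from the change of variables $x=t\bigl(\xi\vartheta(\sigma)+\RD\vartheta^\perp(\sigma)\bigr)-\RE\vartheta^\perp(\sigma)$ (for which $\beta_\sigma(x)=\xi$) with Jacobian $\RD t$, already used in~\cref{lem:fanbeam_detector_discretization_Backprojection}, since $x\in\Omega$ forces $t\in{]{\frac{\RE-1}{\RD},\frac{\RE+1}{\RD}}[}$ and hence $\int\RD t\dd t=\frac{2\RE}{\RD}$, while $\int_\RR\abs{w_{\delta_\xi}'}=2\delta_\xi$ and $\int_\RR w_{\delta_\xi}=\delta_\xi^2$; and (ii) the pointwise-in-$(x,\sigma)$ bounds $\sum_p\abs{w_{\delta_\xi}'(\beta_\sigma(x)-\xi_p)}\le2$ and $\sum_p w_{\delta_\xi}(\beta_\sigma(x)-\xi_p)=\delta_\xi$ for $x\in\Omega$, which hold because the $\xi_p$ are $\delta_\xi$-equispaced and $w_{\delta_\xi}$ has support width $2\delta_\xi$. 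From (i) and $\abs{\alpha-\alpha_q}\le\delta_\alpha/2$ one gets $\int_\Omega\abs{W_p^q(x)-W_p^\alpha(x)}\dd x\le C\delta_\alpha\delta_\xi$; from (ii), after integrating in $\alpha$ over $S^1$ and summing over $p$, $\int_{S^1}\sum_p\abs{W_p^{q(\alpha)}(x)-W_p^\alpha(x)}\dd\alpha\le C\delta_\alpha$ uniformly in $x\in\Omega$, where $q(\alpha)$ is the index with $\alpha\in\Phi_{q(\alpha)}$. Inserting both into the Cauchy--Schwarz bound, summing over $p$ and integrating over $S^1$ gives $\norm{\myGPhi f-\myGS f}^2\le C\frac{\delta_\alpha^2}{\delta_\xi^2}\norm f^2$, and since $C$ is assembled only from $\frac{\RD}{(\RE-1)^3}$, $\frac1{(\RE-1)^2}$ and $\frac{\RE}{\RD}$ (with $\delta_\xi\le1$ used to absorb the lower-order $w_{\delta_\xi}$-terms against the $w_{\delta_\xi}'$-terms), it stays bounded when $\RD$ is bounded and $\RE$ is bounded away from $1$.

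The step I expect to be the main obstacle is estimate (i): in the Radon case $x\mapsto x\inprod\vartheta$ is affine, so the support of $x\mapsto w_{\delta_s}'(x\inprod\vartheta-s)$ is a flat strip of width $2\delta_s$ inside the unit ball with area trivially at most $4\delta_s$, whereas here the level sets of $\beta_\alpha$ are the non-parallel fan lines and the relevant region is a curved strip; it is the fanbeam change of coordinates $x=t(\xi\vartheta+\RD\vartheta^\perp)-\RE\vartheta^\perp$ --- which rectifies this region to a product $\{\abs{\xi-\xi_p}<\delta_\xi\}\times\{t\text{-interval}\}$ with a controlled Jacobian --- that makes the estimate go through with constants depending only on $\RD$ and $\RE-1$. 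A minor technical point is that $\beta_\alpha$ and $\mu_\alpha$ are smooth and uniformly bounded on $S^1$ for $x\in\Omega$ (as $\abs{x}<1<\RE$), so the arc-integrations and all Fubini interchanges are harmless.
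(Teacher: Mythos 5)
Your proof is correct and takes essentially the same route as the paper's: reduce to a per-detector-node Cauchy--Schwarz estimate exactly as in \cref{lem:radon_detector_angular_discretization}, bound the angular (weak) derivative of the combined weight $w_{\delta_\xi}\bigl(\tfrac{x\cdot\vartheta R}{x\cdot\vartheta^\perp+\RE}-\xi_p\bigr)\tfrac1{x\cdot\vartheta^\perp+\RE}$ by a constant depending only on $R$ and $\RE-1$, and combine this with a strip-area estimate of order $\delta_\xi$ and the angular sum $\sum_q\int_{\Theta_q}\abs{\vartheta-\vartheta_q}^2\dd{\vartheta}=\kronO(\delta_\alpha^2)$. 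The paper is terser at two points where you are explicit --- it estimates the strip area ``roughly'' by $R\delta_\xi$ and refers back to the Radon lemma for the summation over $p$, whereas you rectify the strip via the fan change of variables $x=t(\xi\vartheta+R\vartheta^\perp)-\RE\vartheta^\perp$ with Jacobian $Rt$ and handle the $p$-sum through the at-most-two-nonzero-terms property --- but this is a sharpening of the same argument, not a different one.
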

\begin{proof}
  The proof %
  can be done
  analogous to the one for \cref{lem:radon_detector_angular_discretization}
  and leads to the estimation of
  \[
    \Bigl|  w_{\delta_\xi}\Bigl (\frac{x \cdot \vartheta_q \RD}{x \cdot \vartheta^\perp_q+\RE}-\xi_p\Bigr) \frac{1}{x \cdot \vartheta^\perp_q+\RE} -  w_{\delta_\xi}\Bigl(\frac{x \cdot \vartheta \RD}{x \cdot \vartheta^\perp+\RE}-\xi_p\Bigr) \frac{1}{x \cdot \vartheta^\perp+\RE}\Bigr| %
\] %
for fixed $p$ and $q$, $\vartheta \in S^1$ and $x \in \Omega$.
For this purpose, we see that the absolute value of the weak derivative of $\vartheta \mapsto \frac1{x \inprod \vartheta^\perp + \RE} w_{\delta_\xi}(\frac{x \inprod \vartheta R}{x \inprod \vartheta^\perp + \RE} - \xi_p)$
can be bounded by $c' = \frac{(R + 1)^2}{(\RE - 1)^3}$ for $x \in \Omega$ on the stripe
$\abs{\frac{x \inprod \vartheta R}{x\inprod \vartheta^\perp + \RE} - \xi_p} \leq \delta_\xi$ and vanishes for all other $x \in \Omega$. The area of the stripe can, in turn, roughly be estimated by $R\delta_\xi$, such that with the approach in the proof of \cref{lem:radon_detector_angular_discretization}, the above function obeys the bound $c' \abs{\vartheta - \vartheta_q}$ and its
integral over $\Omega$ can be bounded by $c'R \delta_\xi \abs{\vartheta - \vartheta_q}$. This leads to the estimate
\[
  \norm{[\myGPhi f](\xi_p, \placeholder) - [\myGS f](\xi_p, \placeholder)}^2
  \leq \frac{(c')^2 R}{\delta_\xi^3} \Bigl(\sum_{q=1}^Q \int_{S^1} \chi_{\Theta_q}(\vartheta) \abs{\vartheta - \vartheta_q}^2  \dd{\vartheta}\Bigr) \norm{f}^2,
\]
and following the proof of \cref{lem:radon_detector_angular_discretization},
to the desired statement.
Observe that in particular, $c' > 0$ stays bounded under the stated conditions,
such that $c > 0$ can also be chosen to remain bounded.
\end{proof}

Finally, we also discretize $x\in\Omega$ leading to the discrete $\myGApp$ in \eqref{equ:Def_Fanbeam_Mod_App} and consider the corresponding discretization error.
\begin{lemma}
\label{lem:fanbeam_image_discretization}
For $\delta_\xi \leq 1$ and $\delta_x < \sqrt{2}(\RE - 1)$, we  have $\|\myGApp-\myGPhi\| \leq c \sqrt{1+\frac{\delta_x}{\delta_\xi}} \frac{\delta_x}{\delta_\xi} $ for some constant $c>0$ independent of $\delta_\xi$ and $\delta_x$ that remains bounded for $R$ bounded, $\RE$ bounded away from $1$ and $\delta_x$ bounded away from $\sqrt{2}(\RE - 1)$.  %
\end{lemma}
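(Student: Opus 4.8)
The plan is to imitate the proof of Lemma~\ref{lem:radon_image_discretization}, replacing the hat-function kernel $x \mapsto w_{\delta_s}(x \inprod \vartheta_q - s_p)$ by the fanbeam kernel
\[
  g_q(x,p) = \frac1{x \inprod \vartheta_q^\perp + \RE}\, w_{\delta_\xi}\Bigl(\frac{x \inprod \vartheta_q \RD}{x \inprod \vartheta_q^\perp + \RE} - \xi_p\Bigr),
\]
and to control both its magnitude and the size of its support using the pointwise derivative bound and stripe-area argument already developed in the proof of Lemma~\ref{lem:F_detector_angular_discretization}. Writing $\Pi(x) = x_{ij}$ for $x \in X_{ij}$, so that $\abs{\Pi(x) - x} \leq \delta_x/\sqrt2$, and using that $\myGApp f$ arises from $\myGPhi f$ by replacing $f$ with its pixel discretization, one obtains
\[
  [\myGApp f - \myGPhi f](\xi,\alpha) = \frac1{\delta_\xi^2} \sum_{p=1}^P \sum_{q=1}^Q \chi_{\Xi_p}(\xi) \chi_{\Phi_q}(\alpha) \int_\Omega \bigl( g_q(\Pi(x),p) - g_q(x,p) \bigr) f(x) \dd{x}.
\]
Since the right-hand side is piecewise constant on the sinogram pixels, $\norm{\myGApp f - \myGPhi f}^2 = \delta_\xi^{-3} \sum_{p,q} \Delta_q \bigabs{\int_\Omega (g_q(\Pi(x),p) - g_q(x,p)) f(x) \dd{x}}^2$, and applying the Cauchy--Schwarz inequality to the inner integral exactly as in Lemma~\ref{lem:radon_image_discretization} reduces everything to the two estimates
\[
  \int_\Omega \bigabs{g_q(\Pi(x),p) - g_q(x,p)} \dd{x} \leq C(\delta_\xi + \delta_x)\delta_x, \qquad \sum_{p=1}^P \bigabs{g_q(\Pi(x),p) - g_q(x,p)} \leq C\delta_x.
\]
Together with $\sum_q \Delta_q = 2\pi$, these yield $\norm{\myGApp f - \myGPhi f}^2 \leq C \frac{\delta_x^2}{\delta_\xi^2}\bigl(1 + \frac{\delta_x}{\delta_\xi}\bigr)\norm{f}^2$, which is the assertion.

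Both displayed estimates I would prove via the fundamental theorem of calculus along the segment $[x,\Pi(x)]$, i.e., $\abs{g_q(\Pi(x),p) - g_q(x,p)} \leq \int_0^1 \abs{\nabla_x g_q\bigl(x + t(\Pi(x) - x),p\bigr)} \abs{\Pi(x) - x}\dd{t}$. Differentiating the product defining $g_q$, one summand carries $\nabla_x \frac1{x \inprod \vartheta_q^\perp + \RE}$ and the other $w_{\delta_\xi}'$ times $\nabla_x \frac{x \inprod \vartheta_q \RD}{x \inprod \vartheta_q^\perp + \RE}$; since the segment stays within the $\delta_x/\sqrt2$-neighbourhood of $\Omega$, on which the denominator exceeds $\RE - 1 - \delta_x/\sqrt2 > 0$ by the hypothesis $\delta_x < \sqrt2(\RE - 1)$, this gives a pointwise bound $\abs{\nabla_x g_q(\placeholder,p)} \leq c'$ of the order $\tfrac{(\RD + 1)^2}{(\RE - 1 - \delta_x/\sqrt2)^3}$, in the same way the angular derivative was bounded by $\tfrac{(\RD+1)^2}{(\RE - 1)^3}$ in Lemma~\ref{lem:F_detector_angular_discretization}. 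Moreover $\nabla_x g_q(x',p)$ vanishes unless $x'$ lies in the stripe $\bigset{x'}{\abs{\frac{x' \inprod \vartheta_q \RD}{x' \inprod \vartheta_q^\perp + \RE} - \xi_p} < \delta_\xi}$; as the argument map is Lipschitz there and $\abs{x' - x} \leq \delta_x/\sqrt2$, this forces $x$ into the slightly enlarged stripe $\bigset{x}{\abs{\frac{x \inprod \vartheta_q \RD}{x \inprod \vartheta_q^\perp + \RE} - \xi_p} < \delta_\xi + C\delta_x}$, which is the part of $\Omega$ lying between two lines through the source of angular separation $\kronO\bigl((\delta_\xi + \delta_x)/\RD\bigr)$ and hence has area $\kronO(\delta_\xi + \delta_x)$. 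Combining the gradient bound, this area bound and $\abs{\Pi(x) - x} \leq \delta_x/\sqrt2$ gives the first estimate; for the second one uses instead that, for fixed $x'$, at most two of the $\xi_p$ lie within $\delta_\xi$ of $\frac{x' \inprod \vartheta_q \RD}{x' \inprod \vartheta_q^\perp + \RE}$, so that $\sum_p \abs{\nabla_x g_q(x',p)} \leq Cc'$ and hence $\sum_p \abs{g_q(\Pi(x),p) - g_q(x,p)} \leq Cc'\delta_x/\sqrt2$.

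The main technical obstacle is the bookkeeping around the factor $x \inprod \vartheta_q^\perp + \RE$: in contrast to the Radon case, $g_q$ is defined and has bounded derivatives only where this quantity is bounded away from zero, so one must check this not merely at the pixel centres $x_{ij}$ (which is exactly what $\delta_x < \sqrt2(\RE - 1)$ ensures) but along the entire interpolation segment, and then verify that the constants $c'$ produced above stay bounded precisely when $\RD$ is bounded, $\RE$ is bounded away from $1$ and $\delta_x$ is bounded away from $\sqrt2(\RE - 1)$ --- which is the content of the final sentence of the statement. A secondary point requiring some care is the area estimate for the enlarged stripe, which I would justify by the elementary geometry of the fan emanating from the source rather than by a change of variables.
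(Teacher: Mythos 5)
Your proposal is correct and follows essentially the same route as the paper's proof: the same kernel $v_{pq}$, the same fundamental-theorem-of-calculus estimate along the segment $[x,\Pi(x)]$ with the derivative bound of order $(\RD+1)^2/(\RE-1-\delta_x/\sqrt{2})^3$, the same two key estimates (integral over $\Omega$ bounded by $C(\delta_\xi+\delta_x)\delta_x$ and the sum over $p$ bounded by $C\delta_x$ using that at most two $p$ contribute), and the same Cauchy--Schwarz reduction inherited from \cref{lem:radon_image_discretization}. The only minor difference is cosmetic: you estimate the area of the enlarged stripe via the fan geometry (a wedge of small angular width emanating from the source, after a Lipschitz enlargement in the fan coordinate), whereas the paper enlarges the stripe by a Minkowski sum with a ball of radius $\delta_x/\sqrt{2}$ and bounds the resulting area directly; both yield the required $\kronO(\delta_\xi+\delta_x)$ bound with constants bounded under the stated conditions.
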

\begin{proof}
  Again, the proof follows in an analogous manner as \cref{lem:radon_image_discretization} but now, one has to estimate $\abs{v_{pq}\bigl( \Pi(x) \bigr) - v_{pq}(x)}$ for $p,q$ fixed,
  \[
    v_{pq}(x) = \frac1{x \inprod \vartheta_q^\perp + \RE} w_{\delta_\xi}\Bigl(\frac{x \inprod \vartheta_q R}{x \inprod \vartheta_q^\perp + \RE} - \xi_p\Bigr),
  \]
  $x \in \Omega$ and $\Pi(x)$ denoting the projection
  of $x$ onto the set of all pixel centers $x_{ij}$.
  For this purpose, observe that the Euclidean norm of the weak derivative of
  $v_{pq}$ 
  also obeys the bound
  $c' = \frac{(R+1)^2}{(\RE - 1 - \delta_x/\sqrt{2})^3}$ on $\Omega + B(0,\frac{\delta_x}{\sqrt{2}})$. %
  Pursuing the strategy of the proof of \cref{lem:radon_image_discretization}, since the projection error obeys $\abs{\Pi(x) - x} \leq \frac{\delta_x}{\sqrt{2}}$, the area of the stripe $\abs{\frac{x \inprod \vartheta R}{x\inprod \vartheta^\perp + \RE} - \xi_p} \leq \delta_\xi$ enlarged by a ball of radius $\frac{\delta_x}{\sqrt{2}}$ within $\Omega + B(0,\frac{\delta_x}{\sqrt{2}})$ has to be estimated. However, such an estimate is, for instance, given by
  \[
    R\delta_\xi + \bigl(\sqrt{(\xi_p - \delta_\xi)^2 + R^2} + \sqrt{(\xi_p + \delta_\xi)^2 + R^2} + 2\delta_\xi \bigr) \frac{\delta_x}{\sqrt{2}} + \frac{3\pi}{4} \delta_x^2,
  \]
  which can, in turn, be estimated by $c''(\delta_\xi + \delta_x)$ for $c'' > 0$ which only depends on $R$, $W$ and the bound $\sqrt{2}(\RE - 1)$ on $\delta_x$.
  Following the proof of \cref{lem:radon_image_discretization}, one obtains
  \[
    \int_\Omega \abs{v_{pq}(\Pi(x)) - v_{pq}(x)} \dd{x} \leq c(\delta_\xi + \delta_x) \delta_x
  \]
  for a suitable $c > 0$. Further, for fixed $x \in \Omega$,
  \[
    \sum_{p=1}^P \abs{v_{pq}(\Pi(x)) - v_{pq}(x)} \leq \sqrt{2} c' \delta_x
  \]
  since the number of $p$ for which the weak derivative of $v_{pq}$ does not vanish in $x$ is still at most $2$. The latter two estimates suffice to carry out the proof analogous to \cref{lem:radon_image_discretization}, leading to
  the desired estimate after possibly adjusting $c$. This constant can in particular be chosen bounded under the stated conditions.
\end{proof}

As the final step, we estimate the error between the operators $\mM$ and $\mM_{\delta_\xi}$.

\begin{lemma}
  \label{lem:fanbeam_mult_discretization}
  We have $\norm{\mM - \mM_{\delta_\xi}} \leq c \delta_{\xi}$ where
  $c > 0$ stays bounded whenever $R$ stays bounded and $\RE$ is
  bounded away from $1$.
\end{lemma}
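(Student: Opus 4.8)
The plan is to exploit that both $\mM$ and $\mM_{\delta_\xi}$ act on $L^2(\Omega')$ as multiplication operators in the $\xi$-variable, so that their difference is itself multiplication by $\mu - \mu_{\delta_\xi}$, where $\mu(\xi) = \sqrt{\xi^2 + \RD^2}$ and $\mu_{\delta_\xi}(\xi) = \sum_{p=1}^P \chi_{\Xi_p}(\xi)\sqrt{\xi_p^2 + \RD^2}$ is the piecewise-constant interpolant of $\mu$ associated with the partition $(\Xi_p)_p$ of ${]{-\DW/2, \DW/2}[}$. Since the operator norm of a multiplication operator on $L^2$ equals the essential supremum of the modulus of its multiplier, I would reduce the claim to the estimate $\sup_{p=1,\ldots,P} \sup_{\xi \in \Xi_p} \abs{\mu(\xi) - \mu(\xi_p)} \leq c\delta_\xi$.

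To obtain this, I would use that $\mu$ is Lipschitz with constant at most $1$: one has $\mu'(\xi) = \xi/\sqrt{\xi^2 + \RD^2}$, hence $\abs{\mu'(\xi)} < 1$ for every $\xi \in \RR$ and every $\RD > 0$. Together with $\abs{\xi - \xi_p} \leq \delta_\xi/2$ for $\xi \in \Xi_p$ this gives $\abs{\mu(\xi) - \mu(\xi_p)} \leq \delta_\xi/2$, so that $\norm{\mM - \mM_{\delta_\xi}} \leq \delta_\xi/2$. In particular one may take $c = 1/2$, which is a universal constant, so the asserted boundedness of $c$ under the conditions ``$R$ bounded, $\RE$ bounded away from $1$'' holds trivially (these conditions only serve to make the remaining lemmas of the section combine into bounded constants).

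There is essentially no real obstacle in this argument; the only point that deserves a brief check is that the intervals $\Xi_p$ indeed tile ${]{-\DW/2, \DW/2}[}$ exactly, so that $\mu_{\delta_\xi}$ is defined on all of $\Omega'$ without a boundary remainder and the reduction to $\sup_p \sup_{\xi \in \Xi_p}$ above is legitimate. This follows immediately from $\xi_p = \frac{\DW}{P}\bigl(p - \frac{P+1}{2}\bigr)$ and $\delta_\xi = \DW/P$, which yield $\Xi_1 = {[{-\DW/2, -\DW/2 + \delta_\xi}[}$ and $\Xi_P = {[{\DW/2 - \delta_\xi, \DW/2}[}$.
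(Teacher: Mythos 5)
Your proposal is correct and follows essentially the same route as the paper: both reduce $\norm{\mM - \mM_{\delta_\xi}}$ to the sup-norm of the multiplier difference and bound it via the Lipschitz continuity of $\xi \mapsto \sqrt{\xi^2 + \RD^2}$ on the detector interval. Your version is in fact marginally sharper, since using $\abs{\mu'} < 1$ and $\abs{\xi - \xi_p} \leq \delta_\xi/2$ gives the universal constant $c = 1/2$, whereas the paper takes the Lipschitz constant $\DW/\sqrt{\DW^2 + 4\RD^2}$, which is why it needs $R$ bounded and $\RE$ bounded away from $1$ to keep $c$ bounded.
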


\begin{proof}
  The function $\xi \mapsto \sqrt{\xi^2 + R^2}$ is Lipschitz continuous on
  ${]{-\frac{\DW}2,\frac{\DW}2}[}$ with
  constant bounded by $c = \frac{W}{\sqrt{W^2 + 4R^2}}$ such that for $\xi \in \Xi_p$, we
  obtain the estimate
  $\abs{\sqrt{\xi^2 + R^2} - \sqrt{\xi_p^2 + R^2}} \leq c \delta_{\xi}$. Thus,
  \[
\norm{\mM - \mM_{\delta_\xi}} = \sup_{\xi \in {]{-\frac{\DW}2,\frac{\DW}2}[}} \ \Bigabs{ \sum_{p=1}^P \Bigl( \sqrt{\xi^2 + R^2} -
  \sqrt{\xi_p^2 + R^2} \Bigr) \chi_{\Xi_p}(\xi) } \leq c \delta_\xi. \] Since $W$ is bounded under the stated conditions, $c$ also remains bounded.
\end{proof}

Putting everything together allows us to derive convergence results
for the approximate fanbeam transform $\FanbeamApp$ towards $\Fanbeam$
as well as for their respective adjoints.
\begin{theorem}\label{thm:fanbeam_convergence}
  Let $\delta_\xi \to 0$ and $\frac{\delta_x}{\delta_\xi}\to 0$ and $\frac{\delta_\alpha}{\delta_\xi}\to 0$. Then, $\|\Fanbeam-\FanbeamApp\|\to 0$ and $\|\Fanbeam^*-(\FanbeamApp)^*\|\to 0$.
  If, additionally, $\delta_\alpha=\kronO(\delta_\xi^{1+\epsilon})$ and $\delta_x=\kronO(\delta_\xi^{1+\epsilon})$ for $\epsilon \in {]{0,\frac{1}{2}}[}$, then 
$\|\Fanbeam-\FanbeamApp\|=\kronO(\delta^\epsilon_\xi)$ and $\|\Fanbeam^*-(\FanbeamApp)^*\|=\kronO(\delta^\epsilon_\xi)$ where $\delta_\xi \leq \frac43 (\sqrt{2} - 1)$ and $\delta_x < \sqrt{2}(\RE - 1)$.
\end{theorem}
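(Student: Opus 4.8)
The plan is to reduce the claim to the lemmas already proved, exploiting the factorizations $\Fanbeam = \mM\myG$ and $\FanbeamApp = \mM_{\delta_\xi}\myGApp$. Writing
\[
  \Fanbeam - \FanbeamApp = \mM(\myG - \myGApp) + (\mM - \mM_{\delta_\xi})\myGApp
\]
and using submultiplicativity of the operator norm, it suffices to control $\norm{\myG - \myGApp}$, to note that $\mM$ is a bounded multiplication operator with $\norm{\mM} \leq \sqrt{(\DW/2)^2 + \RD^2}$, to invoke $\norm{\mM - \mM_{\delta_\xi}} \leq c\delta_\xi$ from \cref{lem:fanbeam_mult_discretization}, and to observe that $\norm{\myGApp}$ stays bounded. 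For the first point I would insert the intermediate operators $\myGS$ and $\myGPhi$ and combine \cref{lem:F_detector_discretization,lem:F_detector_angular_discretization,lem:fanbeam_image_discretization} through the triangle inequality:
\[
  \norm{\myG - \myGApp} \leq \norm{\myG - \myGS} + \norm{\myGS - \myGPhi} + \norm{\myGPhi - \myGApp} \leq c\Bigl( \sqrt{\delta_\xi}\,\abs{\log\delta_\xi}^{1/2} + \frac{\delta_\alpha}{\delta_\xi} + \sqrt{1 + \frac{\delta_x}{\delta_\xi}}\,\frac{\delta_x}{\delta_\xi} \Bigr),
\]
valid as soon as $\delta_\xi \leq \frac43(\sqrt2 - 1)$ and $\delta_x < \sqrt2(\RE - 1)$. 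In particular $\norm{\myG - \myGApp} \to 0$ under the stated hypotheses, so $\norm{\myGApp} \leq \norm{\myG} + \norm{\myG - \myGApp}$ remains bounded.

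Combining the three contributions gives
\[
  \norm{\Fanbeam - \FanbeamApp} \leq C\Bigl( \sqrt{\delta_\xi}\,\abs{\log\delta_\xi}^{1/2} + \frac{\delta_\alpha}{\delta_\xi} + \sqrt{1 + \frac{\delta_x}{\delta_\xi}}\,\frac{\delta_x}{\delta_\xi} + \delta_\xi \Bigr),
\]
and each summand vanishes as $\delta_\xi \to 0$, $\delta_\alpha/\delta_\xi \to 0$, $\delta_x/\delta_\xi \to 0$, which proves the first assertion. For the rate, assume $\delta_\alpha = \kronO(\delta_\xi^{1+\epsilon})$ and $\delta_x = \kronO(\delta_\xi^{1+\epsilon})$ with $\epsilon \in {]{0,\frac12}[}$. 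Then $\delta_\alpha/\delta_\xi$ and $\delta_x/\delta_\xi$ are $\kronO(\delta_\xi^\epsilon)$, $\sqrt{1 + \delta_x/\delta_\xi}$ stays bounded and $\delta_\xi = \kronO(\delta_\xi^\epsilon)$ trivially. The only term requiring attention is $\sqrt{\delta_\xi}\,\abs{\log\delta_\xi}^{1/2}$: since $\abs{\log\delta_\xi}^{1/2} = \kronO(\delta_\xi^{-(1/2 - \epsilon)})$ as $\delta_\xi \to 0$, we obtain $\sqrt{\delta_\xi}\,\abs{\log\delta_\xi}^{1/2} = \kronO(\delta_\xi^\epsilon)$ — and this estimate fails at $\epsilon = \frac12$, which is precisely why the admissible exponent range here is the open interval ${]{0,\frac12}[}$ rather than ${]{0,\frac12}]}$ as in \cref{thm:radon_convergence}. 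Hence $\norm{\Fanbeam - \FanbeamApp} = \kronO(\delta_\xi^\epsilon)$.

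The statements about the adjoints require no additional argument: since the operator norm of a bounded operator between Hilbert spaces equals that of its Hilbert-space adjoint, we have $\norm{\Fanbeam^* - (\FanbeamApp)^*} = \norm{(\Fanbeam - \FanbeamApp)^*} = \norm{\Fanbeam - \FanbeamApp}$, so all the above bounds and limits carry over verbatim. Once the lemmas are available the argument is essentially bookkeeping; the one genuinely delicate point is the logarithmic loss in \cref{lem:F_detector_discretization}, inherited via \cref{lem:fanbeam_mod_continuity} from the $\abs{\gamma \log\abs{\gamma}}$ behaviour of the angular modulus of continuity of the Radon transform (\cref{lem:radon_angle_mod_continuity}), which forces the strict restriction $\epsilon < \frac12$. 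One should also verify that the stated parameter ranges are consistent: the binding smallness condition is $\delta_\xi \leq \frac43(\sqrt2 - 1)$ from \cref{lem:F_detector_discretization}, which is stronger than the $\delta_\xi \leq 1$ needed in \cref{lem:F_detector_angular_discretization,lem:fanbeam_image_discretization}, while $\delta_x < \sqrt2(\RE - 1)$ is the hypothesis of \cref{lem:fanbeam_image_discretization}.
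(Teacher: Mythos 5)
Your argument is correct and follows essentially the same route as the paper: combine \cref{lem:F_detector_discretization}, \cref{lem:F_detector_angular_discretization} and \cref{lem:fanbeam_image_discretization} for $\norm{\myG - \myGApp}$, handle the weights via \cref{lem:fanbeam_mult_discretization}, and pass to the adjoints by equality of operator norms. The only (immaterial) difference is the splitting $\mM(\myG - \myGApp) + (\mM - \mM_{\delta_\xi})\myGApp$, which requires your boundedness remark on $\norm{\myGApp}$, whereas the paper uses $\mM_{\delta_\xi}(\myG - \myGApp) + (\mM - \mM_{\delta_\xi})\myG$ with $\norm{\mM_{\delta_\xi}} \leq \frac12\sqrt{\DW^2 + 4\RD^2}$ and $\norm{\myG}$ instead.
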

\begin{proof}
  Combining Lemmas~\ref{lem:F_detector_discretization}, \ref{lem:F_detector_angular_discretization} and  \ref{lem:fanbeam_image_discretization} analogously to  the proof of \cref{thm:radon_convergence} yields 
  $\|\myG-\myGApp\| \to 0$ and with the rate $\kronO(\delta_\xi^\epsilon)$
  in case the additional assumptions are satisfied, since $\sqrt{\delta_\xi} \abs{\log \delta_\xi}^{1/2} = \kronO(\delta_\xi^\epsilon)$ for $\epsilon \in {]{0,\frac{1}{2}}[}$. Now, as
\begin{align*}
  \|\Fanbeam-\FanbeamApp\|%
                          &\leq \|\mM_{\delta_\xi}\| \|\myG-\myGApp\| + \|\mM - \mM_{\delta_\xi}\| \|\myG\|
\end{align*}
and $\norm{\mM_{\delta_\xi}} \leq \frac12 \sqrt{W^2 + 4R^2}$, the convergence
to $0$ as well as the rate directly follow with \cref{lem:fanbeam_mult_discretization}. The statements for the adjoints are then immediate.
\end{proof}

\begin{remark}
  Note that many of the statements in \Cref{Subsec:Limited_information} concerning the Radon transform with incomplete angle information can be adapted
  to the fanbeam setting.
  For instance, the convergence results for the fanbeam transform
  can be extended to the limited angle setting of~\Cref{subsubsec:limited_angles}. %
  A transfer to the sparse-angle fanbeam transform is, however, not possible with the above techniques. We  nevertheless expect that a statement analogous to \cref{thm:finite_angle_convergence} is true.
\end{remark}

Faster convergence for functions with higher regularity analogous to \cref{thm:convergence_highregularity} can be shown.
\begin{theorem}
Let $f \in L^2(\Omega)$ and $g\in L^2(\Omega')$ with 
\begin{equation*}
\int_{\abs{t} \leq \delta_\xi} \modcont{\Fanbeam f}(t,0)^2 \dd{t} \leq
  c \delta_\xi^{1 + 2\epsilon} \norm{f}^2 \qquad \text{ and } \qquad\int_{\abs{t} \leq \delta_\xi} \modcont{g}(t,0)^2 \dd{t} \leq
  c \delta_\xi^{1 + 2\epsilon} \norm{g}^2
  \end{equation*}
   for  some $0 < \epsilon \leq 1$ and constant $c\geq 0$. 
  If additionally, $\delta_x=\kronO(\delta_\xi^{1+\epsilon})$ and \hbox{$\delta_\alpha=\kronO(\delta_\xi^{1+\epsilon})$}, then 
  \begin{equation}
  \| \FanbeamApp f - \Fanbeam f\|=\kronO(\delta_\xi^\epsilon) \qquad and \qquad \| (\FanbeamApp)^*  g- \Fanbeam^* g\| = \kronO(\delta_\xi^\epsilon).
  \end{equation}

\end{theorem}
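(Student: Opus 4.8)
The plan is to follow the architecture of the proof of \cref{thm:fanbeam_convergence} --- splitting the error through the multiplication operators $\mM$, $\mM_{\delta_\xi}$ and reducing everything to estimates for the auxiliary operator $\myG$ and its discretizations --- but to replace the worst-case application of \cref{lem:fanbeam_mod_continuity} by the hypothesis on the modulus of continuity. The only genuinely new ingredient is that the detector-discretization steps for $\myG$ and for $\myG^*$ were, in the proofs of \cref{lem:F_detector_discretization} and \cref{lem:fanbeam_detector_discretization_Backprojection}, bounded by $\frac1{\delta_\xi}\int_{\abs t < \frac32\delta_\xi}\modcont{\placeholder}(t,0)^2\dd{t}$ (see \cref{equ:proof_modulus_to_estimate} for the forward operator and the final intermediate display in the adjoint proof) \emph{before} passing to the supremum; retaining this integral form and inserting a $\kronO(\delta_\xi^{1+2\epsilon})$ bound produces $\kronO(\delta_\xi^{2\epsilon})$ for the squared norm, hence the rate $\kronO(\delta_\xi^{\epsilon})$.

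For the forward statement I would write $\Fanbeam f - \FanbeamApp f = (\mM - \mM_{\delta_\xi})\myG f + \mM_{\delta_\xi}(\myG f - \myGApp f)$. The first term is $\kronO(\delta_\xi) = \kronO(\delta_\xi^\epsilon)$ (using $\epsilon \leq 1$) by \cref{lem:fanbeam_mult_discretization} and boundedness of $\myG$, and since $\norm{\mM_{\delta_\xi}}$ is bounded it remains to estimate $\norm{\myG f - \myGApp f}$ via the triangle inequality through the detector, angular and image discretization stages. The angular and image stages contribute $\kronO(\delta_\alpha/\delta_\xi)$ and $\kronO\bigl(\sqrt{1 + \delta_x/\delta_\xi}\,\delta_x/\delta_\xi\bigr)$ by \cref{lem:F_detector_angular_discretization} and \cref{lem:fanbeam_image_discretization}, which are both $\kronO(\delta_\xi^\epsilon)$ under $\delta_\alpha, \delta_x = \kronO(\delta_\xi^{1+\epsilon})$. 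For the detector stage I would invoke \cref{equ:proof_modulus_to_estimate} and then transport $\modcont{\myG f}$ to $\modcont{\Fanbeam f}$ exactly as in \cref{equ:proof_mod_cont_equivalence}, using that $\mu(\xi) = (\xi^2 + \RD^2)^{-1/2}$ is bounded and Lipschitz on ${]{-\DW,\DW}[}$: this yields $\modcont{\myG f}(t,0)^2 \leq c\bigl(t^2 \norm{\Fanbeam f}^2 + \modcont{\Fanbeam f}(t,0)^2\bigr)$. Integrating over $\abs t < \frac32\delta_\xi$, using the hypothesis on $\modcont{\Fanbeam f}$ together with $\delta_\xi^3 = \kronO(\delta_\xi^{1+2\epsilon})$ and $\norm{\Fanbeam f} \leq \norm{\Fanbeam}\norm{f}$, then bounds $\norm{\myG f - \myGApp f}^2$ by $\kronO(\delta_\xi^{2\epsilon})$.

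For the adjoint statement I would use $(\FanbeamApp)^* = \myGApp^* \mM_{\delta_\xi}$ and $\Fanbeam^* = \myG^* \mM$, hence $\Fanbeam^* g - (\FanbeamApp)^* g = (\myG^* - \myGApp^*)\mM g + \myGApp^*(\mM - \mM_{\delta_\xi})g$. The second term is $\kronO(\delta_\xi)\norm{g}$ by \cref{lem:fanbeam_mult_discretization} and the uniform bound on $\norm{\myGApp^*} = \norm{\myGApp}$, which follows from $\norm{\myG - \myGApp} \to 0$. In the first term, split $\myG^* - \myGApp^*$ through the three discretization stages: the angular and image stages are operator-norm estimates and carry over verbatim to the adjoints since $\norm{A} = \norm{A^*}$, giving $\kronO(\delta_\xi^\epsilon)\norm{\mM g}$; for the detector stage apply the intermediate estimate from the proof of \cref{lem:fanbeam_detector_discretization_Backprojection} to the function $\mM g$, which bounds $\norm{(\myGS)^*(\mM g) - \myG^*(\mM g)}^2$ by $\frac{\pi \RD}{\abs{\RE - 1}^2 \delta_\xi}\int_{\abs t < \frac32\delta_\xi}\modcont{\mM g}(t,0)^2\dd{t}$. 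Since $\xi \mapsto \sqrt{\xi^2 + \RD^2}$ is bounded and Lipschitz on ${]{-\DW,\DW}[}$, one again has $\modcont{\mM g}(t,0)^2 \leq c\bigl(t^2 \norm{g}^2 + \modcont{g}(t,0)^2\bigr)$, so the hypothesis on $\modcont{g}$ and $\delta_\xi^3 = \kronO(\delta_\xi^{1+2\epsilon})$ yield $\kronO(\delta_\xi^{2\epsilon})$ for this squared norm too. Collecting the terms gives $\norm{\Fanbeam^* g - (\FanbeamApp)^* g} = \kronO(\delta_\xi^\epsilon)$.

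The only delicate point is the one already flagged: the sharp rate has to be read off from the \emph{integral} form of the detector-discretization estimates rather than the supremum form in which the lemmas are stated, and the modulus of continuity has to be moved across the Lipschitz multiplications by $\mu$ in the forward case and by $\sqrt{\xi^2 + \RD^2}$ in the adjoint case. Everything else --- the angular, image and multiplier discretization steps --- is taken over unchanged from the lemmas above, and the hypothesis $\epsilon \leq 1$ is exactly what makes the residual $\kronO(\delta_\xi)$-contributions (from $\mM - \mM_{\delta_\xi}$ and from the $\delta_\xi^3$-terms) consistent with the claimed order $\kronO(\delta_\xi^\epsilon)$.
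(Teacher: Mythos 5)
Your proposal is correct and follows essentially the same route as the paper: it keeps the integral form \cref{equ:proof_modulus_to_estimate} of the detector-discretization error, transfers the modulus of continuity across the Lipschitz multiplication via \cref{equ:proof_mod_cont_equivalence}, and then reassembles the estimate exactly as in the proof of \cref{thm:fanbeam_convergence} using Lemmas~\ref{lem:F_detector_angular_discretization}, \ref{lem:fanbeam_image_discretization} and \ref{lem:fanbeam_mult_discretization}. Your explicit treatment of the adjoint --- applying the intermediate estimate of \cref{lem:fanbeam_detector_discretization_Backprojection} to $\mM g$ and bounding $\modcont{\mM g}$ by $\modcont{g}$ --- is precisely the ``analogous arguments'' the paper leaves implicit, so there is no genuine deviation.
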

\begin{proof}
As a consequence of \eqref{equ:proof_mod_cont_equivalence}, we have $\modcont{\myG f}(t,0)^2 \leq c' \bigl (t^2 \| \Fanbeam f\|^2+\modcont{\Fanbeam f}(t,0)^2\bigr)$.
With the first estimate in \eqref{equ:proof_modulus_to_estimate} it follows for some $c''>0$ that
\begin{equation} \label{equ:fanbeam_proof_estimate_higher_regularity}
\|\myG f-\myGS f\|^2  \leq \frac{1}{\delta_\xi} \int_{|h|\leq \frac{3}{2}\delta_\xi}  \modcont{\myG f}(h,0)^2 \dd{h}\leq \frac{c''}{\delta_\xi}\int_{|t|\leq \frac{3}{2}\delta_\xi} t^2+\modcont{\Fanbeam f}(t,0)^2 \dd{t}= \kronO(\delta_\xi^{2 \epsilon}).
\end{equation}
The rest of the proof works out completely analogously to \cref{thm:fanbeam_convergence}: By combining \eqref{equ:fanbeam_proof_estimate_higher_regularity}, Lemmas~\ref{lem:F_detector_angular_discretization}, \ref{lem:fanbeam_image_discretization} and the choice of the discretization parameters, one obtains the estimate $\|\myG f- \myGApp f\| \leq c \delta_\xi^\epsilon$. As in the proof of \cref{thm:fanbeam_convergence}, the rate of convergence of $\|\FanbeamApp f- \Fanbeam f\|$ then follows immediately.
Analogous arguments together with \cref{lem:fanbeam_detector_discretization_Backprojection} yield the result for the adjoint operator.
\end{proof}

\begin{remark}
  The restriction $\epsilon<\frac{1}{2}$ 
  appears in \cref{thm:fanbeam_convergence} since
  \cref{lem:fanbeam_mod_continuity} only yields this estimate for such $\epsilon$, but for more regular functions, this restriction can be removed.
  Due to the factorizations $\Fanbeam = \mM \myG$
  and $\FanbeamApp = \mM_{\delta_\xi} \myGApp$ and the sharpness of \cref{lem:fanbeam_mult_discretization}, the rate cannot improve beyond $\epsilon = 1$ using the presented strategy.
\end{remark}

\section{Numerical implementation and experiments}\label{Sec:Numeric}
In this section, we complement the previously discussed analytical results with concrete numerical considerations and experiments. 
\subsection{Numerical implementation}
We consider the algorithmic implementation of the Radon transform of a discrete function $f\in U \widehat =\RR^{N\times M}$ according to \eqref{equ:def_discrete_image_sinogram_spaces} with $f_{ij}$ the value associated to $X_{ij}$. Our considerations focus on the Radon transform as the pixel-driven backprojection is numerically well understood, see, e.g.,~\cite{Xie_CUDA_paralelization_2015}.  A major part in the computation of the discrete transformation $[\RadonDisc f]_{pq}$ consists of finding   all $x_{ij}$ such that the $w_{\delta_s}$ terms in \eqref{equ:def_discrete_radon} do not vanish, i.e., the $x_{ij}$ sufficiently close to $L(s_p,\varphi_q)$. In this case, we say that $x_{ij}$ is adjacent to $L(s_p,\vartheta_q)$. %
For fixed $p$ and $q$, the adjacency $x_{ij}$ can be determined  by the following steps: Introduce the $\bold x$ and $\bold y$ coordinates of the discrete grid, i.e., $\bold x_i=\delta_x\bigl(i-(N+1)/2\bigr)$ and $\bold y_j=\delta_x \bigl(j-(M+1)/2\bigr)$ for $i\in \{1,\dots,N\}$, $j\in \{1,\dots,M\}$. Fix $\bold y_j$ and compute $\vartheta_{\bold x}^{-1} ( \tilde s -\vartheta_{\bold y }\bold y_j)$ for $\tilde s\in \{s_{p-1},s_{p+1}\}$ if $\vartheta_{\bold x}^{-1}\neq 0$ for $(\vartheta_{\bold x},\vartheta_{\bold y})$ the components of $\vartheta_q$. These values establish the boundaries for the range of %
all $\bold x_i$ for which $(\bold x_i,\bold y_j)$ is adjacent to $L(s_p,\varphi_q)$. The computation of the Radon transform can thus be summarized in \cref{algo:Pixel-Driven-Radon}. Conversely, when considering $[\RadonDisc^*g]_{ij}$, one has to determine, for $x_{ij}$ and fixed $\varphi_q$, all $s_p$ such that $x_{ij}$ is adjacent to $L(s_p,\varphi_q)$, and then compute the corresponding weighted sum, see \cref{algo:Pixel-Driven-Back}.

\begin{algorithm}
    \caption{Pixel-driven Radon transform}
    \label{algo:Pixel-Driven-Radon}
    \begin{algorithmic}[1] %
        \Function{Radon$(f,N,M,P,Q,\delta_x,\delta_s,\{\varphi_q\}_q)$}{}
            \For{ $p\in \{1,\dots,P\}$, $q \in \{1, \dots,Q\}$} \label{line:outer_forloop}
				\State $\text{val} \gets 0$                
                \State  $(\vartheta_{\bold x},\vartheta_{\bold y})\gets \vartheta_q$
				 \For{ $j\in \{1,\dots,M\}$} \label{line:middle_forloop}
                	\If {$\vartheta_{\bold x}\neq 0$}
                		\State $(\underline{\bold x},\overline{\bold x}) \gets \text{sort}\bigl(\vartheta_{\bold x}^{-1} ( s_{p-1} -\vartheta_{\bold y }\bold y_j), \vartheta_{\bold x}^{-1} ( s_{p+1} -\vartheta_{\bold y }\bold y_j)\bigr)$            		
						\State $\mathcal{X}_{pq}^j \gets\set{i\in \{1,\dots,N\}}{\bold x_i\in [\underline{\bold x},\overline{\bold x}]}$
                	\ElsIf{$\abs{\vartheta_{\bold y}\cdot \bold y_j-s_p}<\delta_s$}
                	 	\State $\mathcal{X}_{pq}^j \gets \{1,\dots,N\}$ 
                	 \Else
	                	 \State $\mathcal{X}_{pq}^j \gets \emptyset$ 
					\EndIf
					\For {$i \in \mathcal{X}_{pq}^j$} \label{line:inner_forloop}
					\State $\alpha\gets \delta_s-\abs{s_p-\vartheta_{\bold x} \bold x_i-\vartheta_{\bold y} \bold y_j}$
					\State $\text{val}\gets \text{val}+\alpha f_{ij}$
					\EndFor
				\EndFor				
			\State $g_{pq} \gets \frac {\delta_x^2}{\delta_s^2}\text{val}$
            \EndFor
            \State \textbf{return} $g$
        \EndFunction
    \end{algorithmic}
\end{algorithm}

\begin{algorithm}
    \caption{Pixel-driven backprojection}
    \label{algo:Pixel-Driven-Back}
    \begin{algorithmic}[1] %
        \Function{Backprojection$(g,N,M,P,Q,\delta_x,\delta_s,\{\varphi_q\}_q)$}{}
            \For{ $i\in \{1,\dots,N\}$, $j\in \{1,\dots,M\}$}
				\State $\text{val}\gets 0$            	
            	\For{ $q\in \{1,\dots,Q\}$}      						            	
					\State $ p\gets P (\vartheta_q \cdot x_{ij}+\frac{1}{2})$
					\State $ (\overline{p},\underline{p}) \gets \bigl(\text{ceil}(p),\text{floor}(p)\bigr)$
					\State $\alpha  \gets  \overline{p} - p $
					\State $\text{val}\gets \text{val} +\Delta_q \big(\alpha g_{\underline{p}q}+(1-\alpha) g_{\overline{p}q}\big)$
				\EndFor				
			\State $f_{ij} \gets \frac {\text{val}}{\delta_s}$
            \EndFor
            \State \textbf{return} $f$
        \EndFunction
    \end{algorithmic}
\end{algorithm}

\begin{remark}
 Note that the determination of $\overline{\bold x},\underline{\bold x}$ in \cref{algo:Pixel-Driven-Radon} becomes unstable for small $\vartheta_{\bold x}$, which can be remedied by relaxing the condition $\vartheta_{\bold x} \neq 0$ to $|\vartheta_{\bold x}|>\epsilon$ for some $\epsilon>0$ or by splitting into almost horizontal and almost vertical lines as done, for instance, in \cite{Averbuch01fastslant}. 
\end{remark}

Due to high dimensionality,  the weights used in the computation of $\RadonDisc$ are typically not saved but rather computed on the fly.
Note that all operations inside the for loop in Line \ref{line:outer_forloop} of \cref{algo:Pixel-Driven-Radon} can be parallelized with one thread for each sinogram pixel $S_p\times\Phi_q$  without creating race conditions, since the computation $[\RadonDisc f]_{pq}$ is independent of $[\RadonDisc f]_{\tilde p \tilde q}$ for $(p,q)\neq (\tilde p, \tilde{q})$. In \cite{Du2017}, further parallelization using a thread per pixel was used, which however introduces possible race conditions and thus requires further considerations.  

Let us briefly discuss the complexity of the algorithm.
In the following we assume that all the basic operations in  \cref{algo:Pixel-Driven-Radon} and \cref{algo:Pixel-Driven-Back} possess the same computational complexity and hence, the total computational complexity can be estimated by the number of executed operations.

It is easy to see that the determination of all sets $\mathcal{X}_{pq}^j$ with $j=1,\dots,M$ for fixed $p$ and $q$ requires $\kronO(M)$ operations, so the total effort in the determination of the adjacency relation per projection is $\kronO(M P)$ and thus, $\kronO(MPQ)$ for the entire sinogram. %
Since each pixel $x_{ij}$ is adjacent to at most two detector offsets, we get  $\sum_{p=1}^P \sum_{j=1}^M |\mathcal X_{pq}^j|\leq 2NM$ for fixed $q$, and thus, the weighted summation in the computation of a projection of the pixel-driven Radon transform requires $\kronO(N M)$ operations. For the computation of the entire sinogram this leads to $\kronO(NMQ)$ operations. In total, we obtain
the complexity estimate $\kronO(MPQ + NMQ)$.

 When assuming $N\sim P$, the total computational complexity of the computation of the Radon transform is $\kronO(NMQ)$, which is the same as for all other common discretization approaches for the Radon transform. For the backprojection it is easy to see, that the effort for computation of $[\RadonDisc^* g]_{ij}$ for fixed $i,j$ is $\kronO(Q)$  and thus computing the entire backprojection requires $\kronO(NMQ)$ operations, again in line with other common approaches.

\begin{remark}
  Let us estimate the number of adjacent pixels for a single detector, as this is the relevant quantity concerning complexity if the computation can be parallelized as mentioned above. All adjacent pixels have to be contained in a stripe (slightly larger than the stripe associated with the detector), whose area is not greater than $\sqrt{N^2+M^2}\delta_x (\sqrt{2}\delta_x + 2\delta_s)$. Dividing
  by the pixel area $\delta_x^2$ yields an upper bound for
  the number of adjacent pixels.
Conversely, if $\varphi_q = 0$ or $\varphi_q = \frac\pi2$, the maximal number of adjacent pixels can usually be estimated from below by $\max(N,M) \lceil \frac{2\delta_s}{\delta_x} \rceil$.
Hence, in the worst case, the number of adjacent pixels lies in the range $[\max(N,M) \lceil \frac{2\delta_s}{\delta_x}\rceil, \sqrt{N^2+M^2}(\sqrt{2} + \frac{2\delta_s}{\delta_x})]$.
Thus, in the standard setting $N=M=P$, $\delta_x=\frac{2}{N}$ and $\delta_s=\frac{2}{P}$, %
this is contained in the range $[2N,5N]$.
In comparison, the number of pixels relevant for the computation of a single detector via ray-driven methods is, in the worst case, in the range $[2N,3N]$,
depending on the implementation. In this light, ray-driven and pixel-driven methods approximately share the same complexity.

In the case $P\ll N$ associated to the obtained convergence results, choosing $N\sim M$,  $\delta_x=\frac{2}{N}$ and $\delta_s=\frac{2}{P}$, the number of relevant pixels for the computation of a single detector via the pixel-driven method is roughly of $\mathcal{O}(\frac{N^2}{P})$. Note that for ray-driven methods, the case $P\ll N$ is not feasible, as only a fraction of the available pixels are used in the computation of a projection.
\end{remark}

In the supplementing information of \cite{C8NR09058K} it is shown, that the pixel-driven method (there referenced as proposed method) for $P=N$ can be executed at comparable speed as other discretization approaches.

\subsection{Numerical examples}

In this section, we study results for the aforedescribed implementation on a concrete example.

We start by considering the (modified) Shepp--Logan phantom \cite{1974ITNS...21...21S} in terms of qualitative results, showing that indeed, suitable approximations can be obtained.
Figure \ref{Fig:Shepp_logan} depicts the Shepp--Logan phantom and its discrete Radon transform via the pixel-driven approach as well as ray-driven approach (the latter computed using the ASTRA toolbox \cite{Palenstijn2016,vanAarle:16}), and the corresponding pixel-driven backprojection, where the phantom has $1200\times 1200$ pixels and the sinogram has $1200\times 360$ pixels with angles uniformly distributed in $[0,\pi[$. This standard example shows that qualitatively, the proposed discretization indeed yields suitable results visually almost indistinguishable from the ray-driven transform. 
What is visually difficult to see --- due to the high number of angles used --- is that in some projections, oscillations are created, see \cref{Fig:Shepp_logan_angle}. There, one can see that for $\varphi=45^\circ$, strong oscillations occur with increasing amplitude for higher resolutions, while for the exemplary angle $\varphi=42.5^\circ$, only very mild oscillations occur. For small resolutions, the pixel-driven projection is quite similar to the ray-driven projection.

{
\begin{figure}[t]
\center{
\begin{overpic}[height=0.23\textheight]{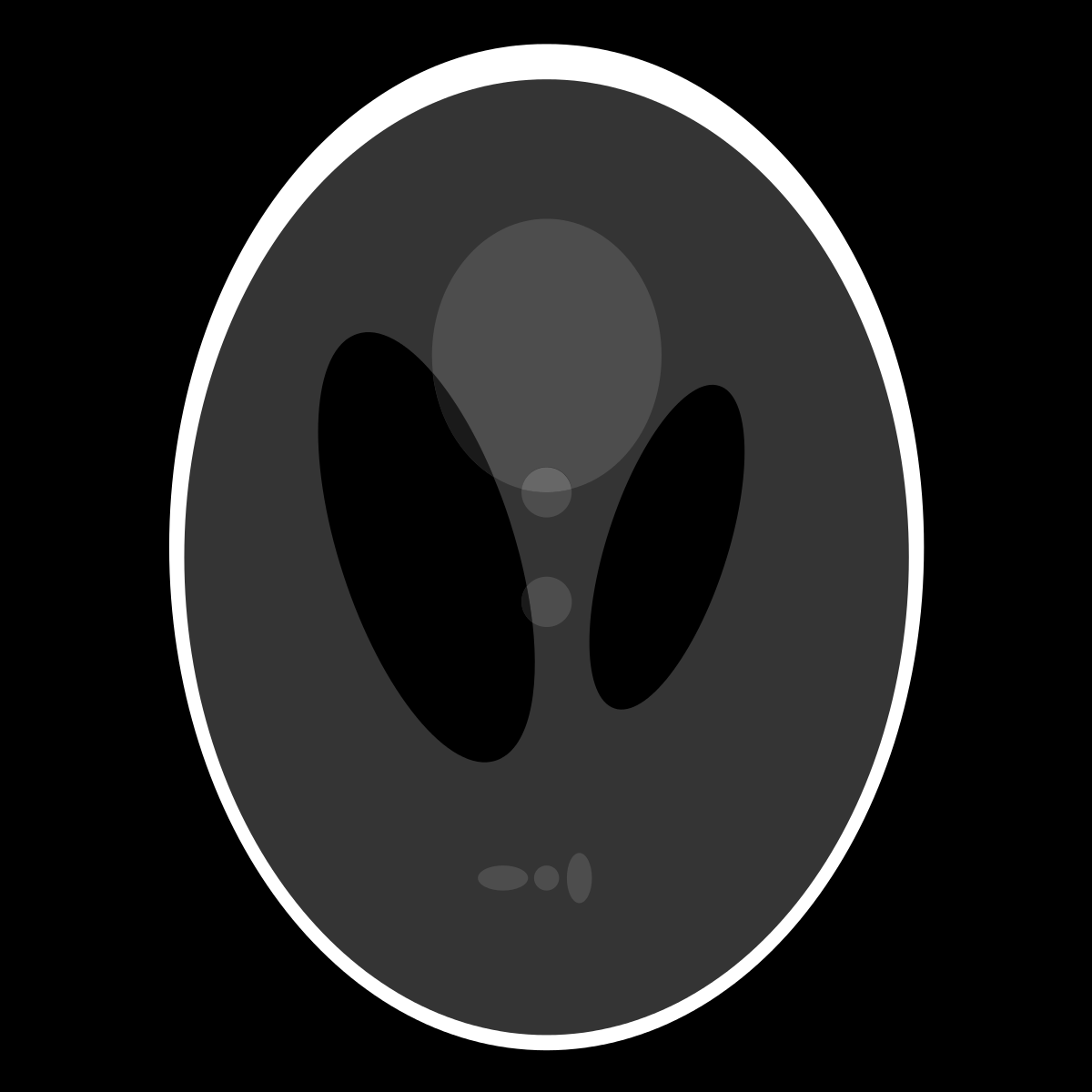}
\put (5,88) {}
\end{overpic}
\begin{overpic}[height=0.23\textheight]{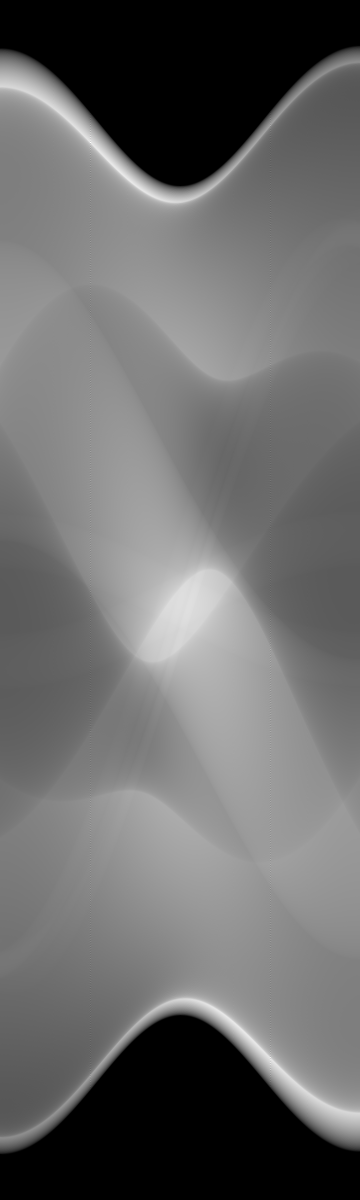}
\put (10,88) {}
\end{overpic}
\begin{overpic}[height=0.23\textheight]{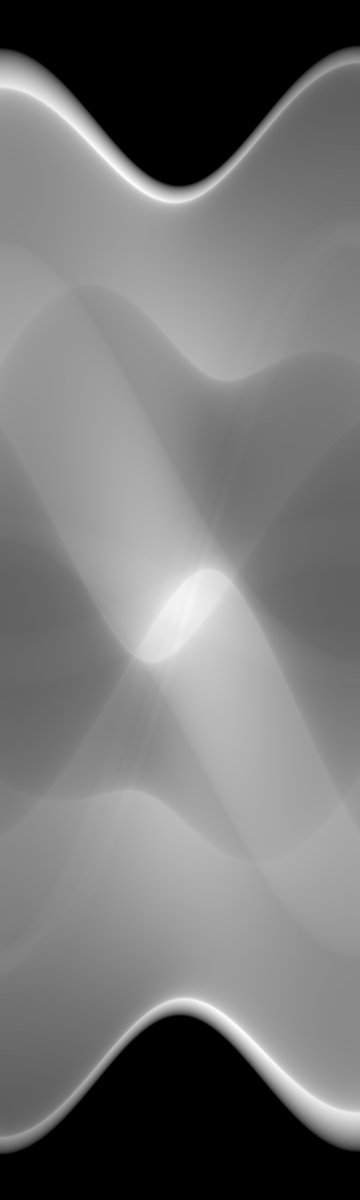}
\put (10,88) {}
\end{overpic}
\begin{overpic}[height=0.23\textheight]{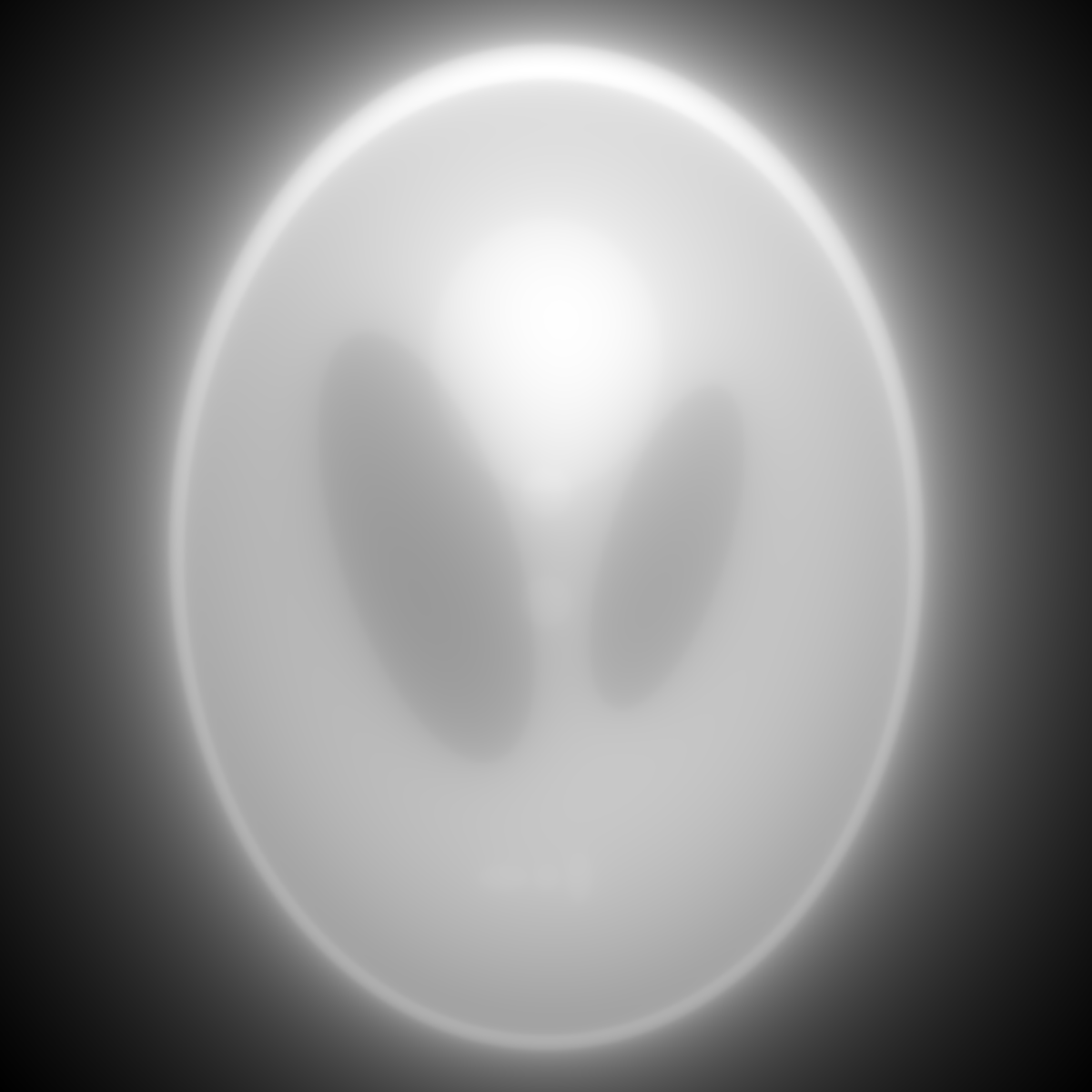} 
\put (5,88) {}
\end{overpic}
\caption{From left to right: The (modified) Shepp--Logan phantom ($1200\times1200$ pixels), its  pixel-driven Radon transform, its ray-driven Radon transform (both $360\times 1200$ pixels) and the pixel-driven backprojection.}
\label{Fig:Shepp_logan}
}
\end{figure}
}

\begin{figure}[t]
\center{
\includegraphics[scale=0.73]{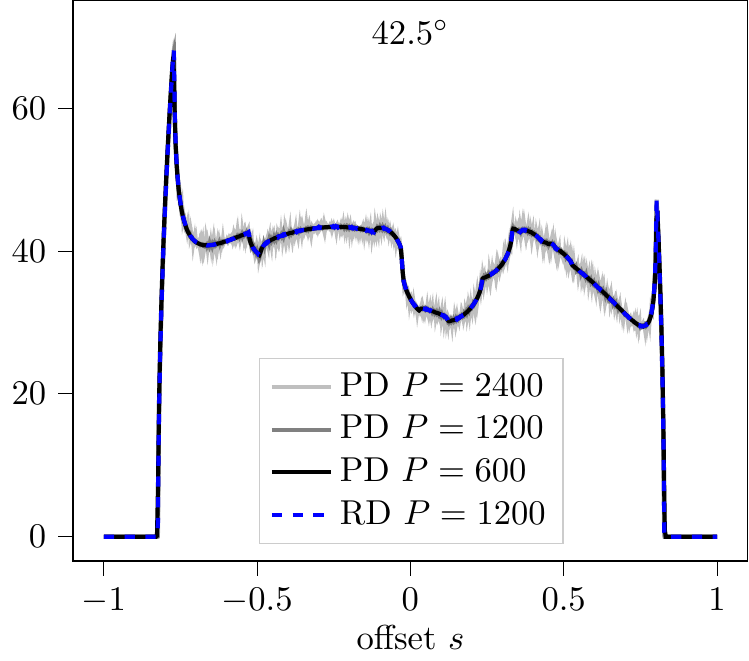}%
\includegraphics[scale=0.73]{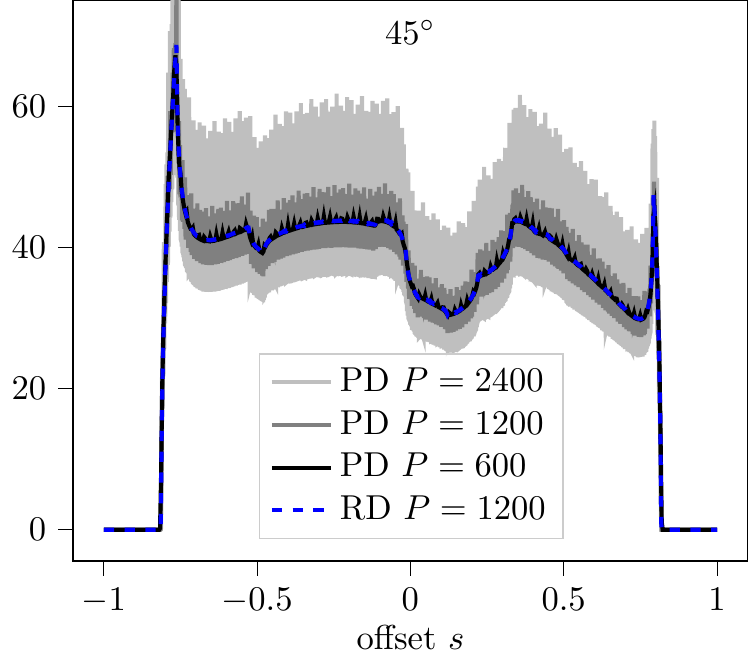}%
\caption{Projection plots for the  Shepp--Logan phantom for $\varphi\in \{42.5^\circ,45^\circ\}$ with the pixel-driven method (PD) for $P\in \{2 N,N,\frac{1}{2}N\}$ as well as the ray-driven projection (RD) for $P=N$.} 
\label{Fig:Shepp_logan_angle}
}
\end{figure}

\begin{figure}
\center
\includegraphics[scale=1]{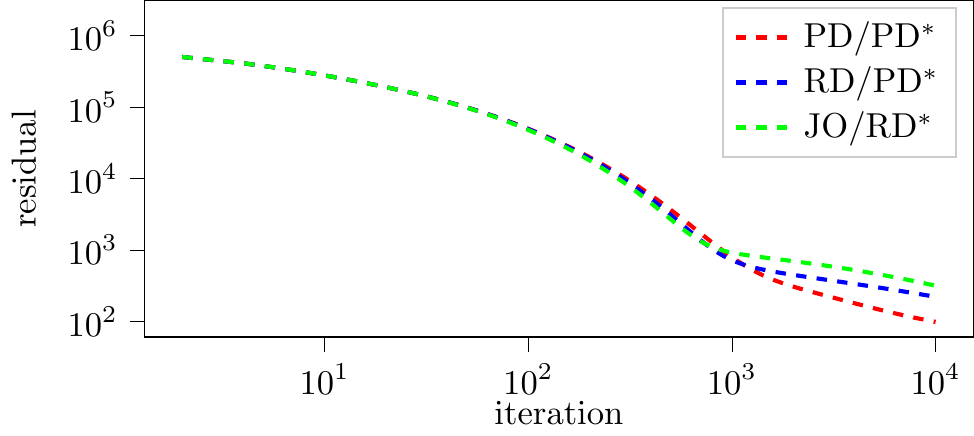}
\caption{Log-log plot of the residuals of the Landweber iteration using either the adjoint PD/PD$^*$ method  with pixel-driven transform and backprojection, the non-adjoint  RD/PD$^*$ method with ray-driven transform and pixel-driven backprojection and the JO/PD$^*$ method with Joseph interpolation Radon transform and pixel-driven backprojection on the Shepp--Logan phantom with $N=300$, $P=300$ and $Q=100$.}
\label{Fig:Shepp_logan_log_log}
\end{figure}

Furthermore, we tested the Landweber iteration \cite{1951AmJM...73..615L} for Radon inversion, i.e., the solution of $\Radon f=g$, for different discretization strategies. The classical Landweber iteration requires both a discrete forward operator and a discrete adjoint, however, as mentioned earlier, in tomography, these discrete operators are in fact often not adjoint due to discretization errors. Since the convergence theory of the Landweber method is based on adjoint operators, it is not quite clear from a theoretical perspective how ``non-adjoint'' methods behave.
We consider three versions of the Landweber iteration: the adjoint method ``$\text{PD/PD}^*$'' using pixel-driven transform and backprojection, the non-adjoint  method \hbox{``$\text{RD/PD}^*$''} using ray-driven forward and pixel-driven backprojection  and the non-adjoint method ``$\text{JO/PD}^*$'' using Joseph interpolation kernel \cite{4307572} for the Radon transform, and pixel-driven backprojection. (We employ the ASTRA toolbox for the latter two methods' forward and backprojections, see \cite{Easy_implementation}, and all methods executed with single precision.) The data on the right-hand side are created by the respective forward operators.  
One might expect that the non-adjointness has a negative effect on the convergence speed of the iterative method.
Indeed, \cref{Fig:Shepp_logan_log_log} depicts the resulting $L^2$ residuals in a log-log plot, showing that initially, the residuals behave almost  identically, but the non-adjoint ASTRA methods slow down significantly at some point, while the adjoint method's residual continues to decrease at a higher rate.  This suggests that the Landweber iteration suffers from worse convergence properties for non-adjoint methods. 
A similar experiment was already presented by the authors in the supplementing information of \cite{C8NR09058K}. 

\begin{remark}
For the simple case of the Landweber iteration, this experiment shows benefits of using adjoint discrete operators concerning convergence properties, which we believe to extend to other iterative solution methods. This suggests a theoretical advantage of adjoint methods, which, together with the gained knowledge regarding the pixel-driven method's convergence, makes it worthwhile studying.
\end{remark}

\subsection{Numerical convergence rates}
We assume in the following square images with $N\times N$ pixels representing the discretization of $[-1,1]^2$, and sinograms with $P\times Q$ pixels where the angles are uniformly distributed in $[0,\pi]$.
Further, we denote by $\delta =(\delta_s,\delta_\varphi,\delta_x)=$\hbox{$(\frac{2}{P},\frac{\pi}{Q},\frac{2}{N})$} the degrees of discretization and in particular recall that then, $N$, $P$ and $Q$ denote the square root of the number of pixels, the number of detectors and the number of angles, respectively. 

In this subsection we focus on the impact of different strategies concerning the choice of the discretization parameters onto the degree of approximation for a very simple example. 
We consider the function
\begin{equation}
  \label{eq:numerical_test}
f(x)= \chi_{B(0,r)}(x) \qquad \text{with} \qquad [\Radon f](s,\varphi)= g(s,\varphi)=
\begin{cases}
  \sqrt{r^2-s^2} & \text{if} \ \abs{s} \leq r, \\
  0 &\text{else},
\end{cases}
\end{equation}
 where $r=0.6$ and in particular, the transformed function does not depend on $\varphi$ as $f$ is rotationally invariant. 
The discrete Radon transform via \eqref{equ:def_Radon_all_discrete} applied to the function $f$ with respect to the discretization $\delta$ is denoted by $g^\delta(s,\varphi)=\sum_{p=1}^{P} \sum_{q=1}^{Q} \chi_{S_p}(s) \chi_{\Phi_q}(\varphi) g^\delta_{pq}$.

To quantitatively compare the effect of the approximation we consider the $L^2$-error between the continuous and discrete Radon transform applied to $f$ whose square is computed via
\begin{equation*}
\int_{[{-\pi,\pi}[} \int_{-1}^1 |g-g^\delta|^2 \dd{s} \dd{\varphi}= \norm{g- g^\delta}^2=\norm{g}^2+ \norm{g^\delta}^2 - 2 \scp{g}{g^\delta}.
\end{equation*}
Due to the explicit form of $g$ and $ g ^\delta$, one computes
\begin{align*}
  &\norm{g}^2=\frac {8\pi} {3} r^3 =2\pi \sum_{p=1}^{P} \Bigl[  \Pi_{[-r,r]}(s)r^2-\frac{\Pi_{[-r,r]}(s)^3}{3} \Bigr]_{s=s_p-\frac{\delta_s}{2}}^{s_p+\frac{\delta_s} {2}},
    \ \ %
    \norm{g^\delta}^2  = \!\!\sum_{p,q=1}^{P,Q}\!\!  {\delta_s} \Delta_q| g^\delta_{pq}|^2,
\\ \notag
&\scp{g}{g^\delta} = \sum_{p,q=1}^{P,Q} \Delta_q g^\delta_{pq}  \int_{S_p}g(s,\varphi) \dd{s} = \sum_{p,q=1}^{P,Q} \Delta_q g^\delta_{pq}  \bigl(G(s_p+\tfrac{ \delta_s}{2})-G(s_p-\tfrac{ \delta_s}{2})\bigr),
\\
&G(s)=\frac{1}{2} \Bigl( \Pi_{[-r,r]}(s) \sqrt{r^2-\Pi_{[-r,r]}(s)^2}+ r^2\arcsin\Bigl( \frac{\Pi_{[-r,r]}(s)}{r}\Bigr)\Bigr),
\end{align*}
where $\Pi_{[-r,r]}$ is the projection onto $[-r,r]$, i.e., $\Pi_{[-r,r]}(s) = \min(r, \max(-r, s))$, and $G$ is an indefinite integral of $s \mapsto g(s,\varphi)$ for a $\varphi$. This approach can also be adapted in a straightforward way to measure the $L^2$-error
$\bigl(\int_{-1}^1 \abs{g(s,\varphi) - g^\delta(s,\varphi)}^2 \dd{s} \bigr)^{1/2}$
of the projection associated with a fixed angle $\varphi \in {[{-\pi,\pi}[}$.

Now, concerning the expected behavior of the $L^2$-error, it is possible to verify that  for all $\epsilon<1$, we have $\modcont{g}(h,0) = \kronO(\abs{h}^{\epsilon})$.
Hence, \cref{thm:convergence_highregularity} and \cref{thm:finite_angle_convergence} guarantee convergence rates of $\kronO(\delta_s^{\epsilon})$ for each $\epsilon < 1$ when choosing $\delta_x = \kronO(\delta_s^2)$ and $\delta_\varphi = \kronO(\delta_s^2)$ for both the discrete Radon transform as well as the sparse angle transform, while the choice $\delta_x \sim \delta_s$, $\delta_\varphi \sim \delta_s$ does not guarantee convergence.
For this reason, we perform experiments for both choices.

\begin{figure}
\includegraphics[scale=0.73]{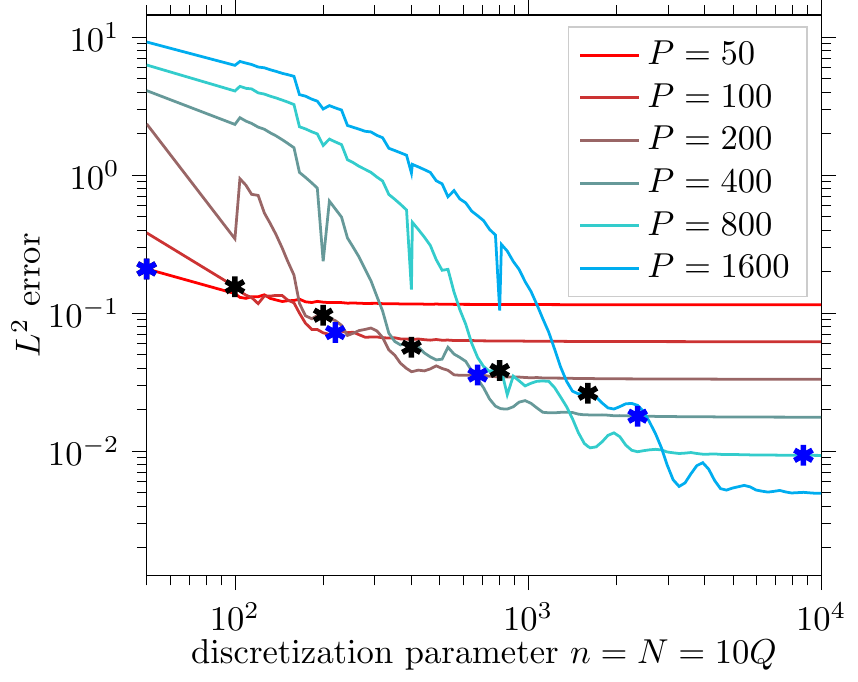}%
\includegraphics[scale=0.73]{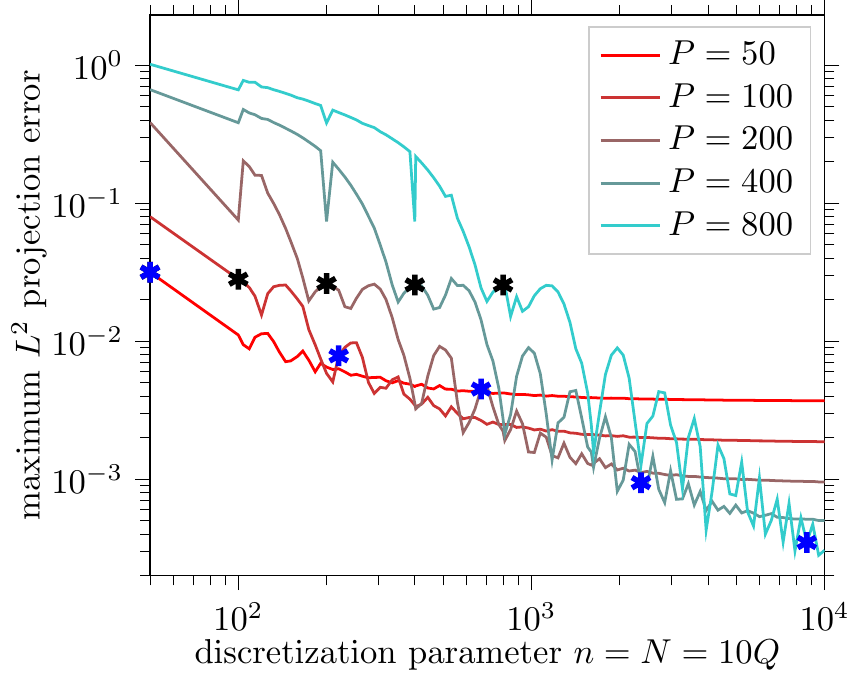}
\caption{Log-log plots of the $L^2$-errors for $P$ detectors where $P \in \sett{50,100,200,400,800,1600}$ (left: $L^2$-error of the sinogram, right: maximal $L^2$-error of each projection). The discretization level $\nchoice = N = 10Q$ is plotted against the respective $L^2$-error.
  The black and blue asterisks mark the errors for the choices $\nchoice = P$ and $\nchoice =\frac{ P^2}{90}+P$, respectively.} %
\label{Fig:Log-log_Error}
\label{Fig:Log-log_Error_Worst_angle}
\end{figure}

\Cref{Fig:Log-log_Error} shows log-log plots of the $L^2$-errors for~\eqref{eq:numerical_test}, where the $L^2$-error with respect to the whole sinogram domain %
and the maximal $L^2$-error of a single projection with respect to each discrete angle is plotted.  Each plot corresponds to a fixed $P$ %
and varying $\nchoice$ such that %
$\nchoice = N$ and $Q = \frac{\nchoice}{10}$. One can see that there is always a point where increasing $\nchoice$ does no longer reduce the error, i.e., where the maximal accuracy that is possible for fixed $P$ is reached. In \cref{Fig:Log-log_Error}, we also mark both the choice
$\nchoice \sim P^2$ and $\nchoice \sim P$ on the plots. One can see that indeed, as predicted by the theory, in case of $\nchoice \sim P^2$, both the $L^2$-error on the whole sinogram domain as well as the maximal $L^2$-error of each projection vanish with some rate that can be identified to roughly correspond to $\kronO(\delta_s)$, which indeed appears to the best convergence rate in this scenario. For the choice $\nchoice \sim P$, convergence is not guaranteed, however, the $L^2$-error on the sinogram domain still seems to vanish with some rate, presumably since the data $f$ according to~\eqref{eq:numerical_test} does not reflect the worst case. In contrast, the maximal $L^2$-error of each projection apparently does not vanish, i.e., not satisfying the convergence assumption does indeed lead to non-convergence.

\begin{figure}
  \includegraphics[scale=0.73]{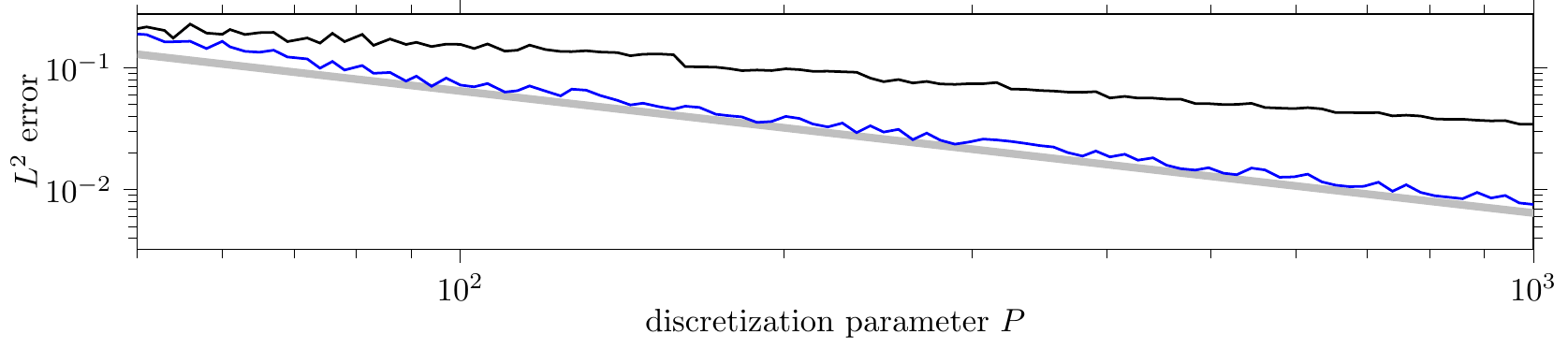}
\caption{Log-log plot of the $L^2$-error on the whole sinogram domain against the discretization level $P$ for $N = P$, $Q = \frac{P}{10}$ (black) and $N = \frac{P^2}{90}+P$, $Q = \frac{ P^2}{900}+\frac{P}{10}$ (blue). %
  The gray line represents the convergence rate $\kronO(\delta_s)$.}
\label{Fig:Log-log-dependent}
\end{figure}

\begin{figure}
\newcommand{\mylen}{0.65}
\begin{tabularx}{\textwidth}{c@{\,}c@{\,}c}
  \includegraphics[scale=\mylen]{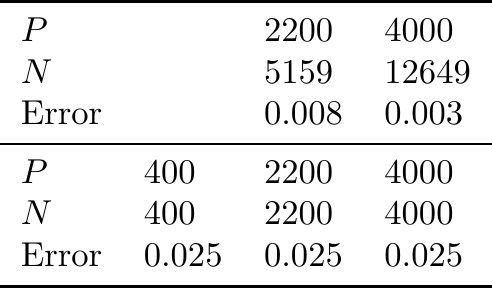}
  &
\includegraphics[scale=\mylen]{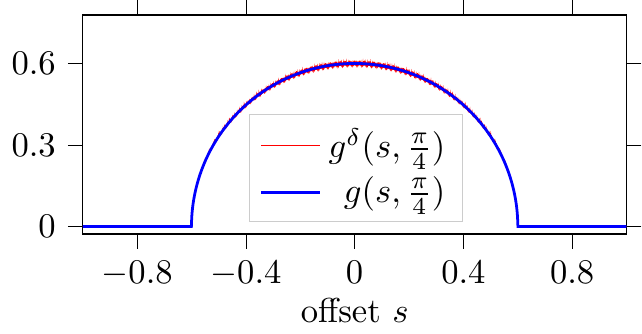}  
& \includegraphics[scale=\mylen]{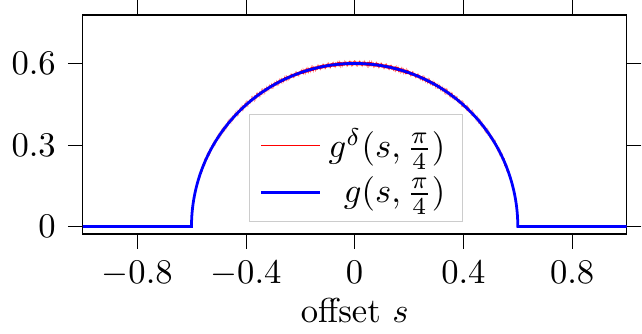} 
\\ 
\includegraphics[scale=\mylen]{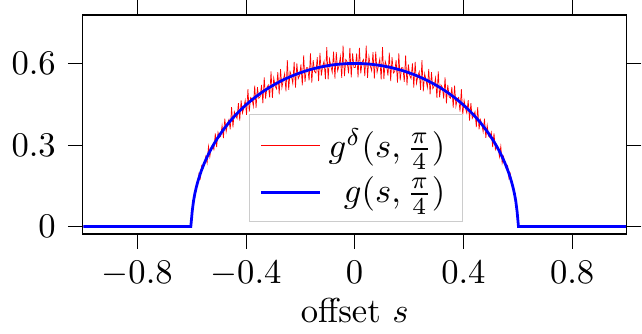} 
& \includegraphics[scale=\mylen]{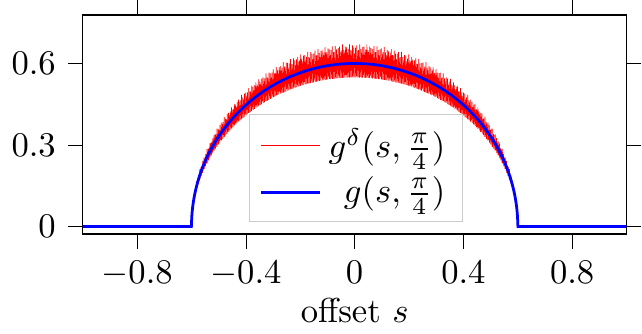}  & \includegraphics[scale=\mylen]{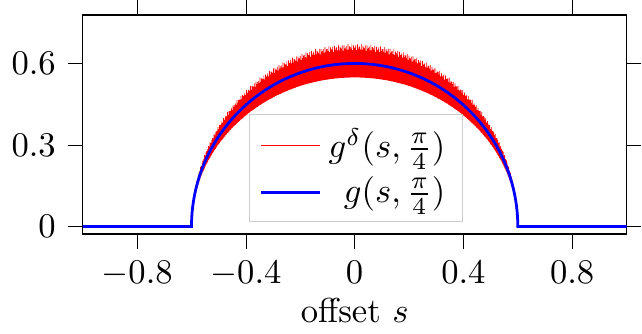} 
\end{tabularx}
\caption{Comparison plots for the continuous projection (blue) and the discrete projection corresponding to the maximal $L^2$-error (red) for convergent and non-convergent
  discretization parameter choice.
  The table summarizes the choice of $P$ and $N$ as well as the resulting error. The top row corresponds to the choice $N \sim P^{3/2}$ while the bottom row corresponds to the choice $N = P$. 
}
\label{Fig:Worst_angle_oscilations}
\end{figure}

These observations can also be confirmed by examining the $L^2$-error on the whole sinogram domain in dependence of $P$ for both choices $\nchoice \sim P$ and $\nchoice \sim P^2$, see \cref{Fig:Log-log-dependent}, where this error is plotted against $P$ such that the convergence rates become apparent. Further, the non-convergence behavior for the maximal $L^2$-error of each projection is investigated in more detail in \cref{Fig:Worst_angle_oscilations}. There, comparison plots of the discrete projections corresponding to the maximal error (typically an angle that is an integer multiple of $\frac \pi 2$) are shown. In these plots, it becomes apparent that the error is dominated by high-frequency oscillations that remain constant for the choice $N \sim P$, but vanish, e.g., for the choice $N \sim P^{3/2}$. This confirms that with a suitable parameter choice rule, the unwanted oscillatory behavior can be suppressed.

\section{Conclusion and outlook}
\label{sec:conclusions}
This work presents a novel rigorous analysis of pixel-driven approximations of the Radon transform and the backprojection. It is shown that this scheme leads to convergence in the operator norm $L^2(B(0,1))\to L^2(\mathbb{R}\times S^1)$	subject to suitably chosen discretization parameters $\delta_s,\delta_\varphi,\delta_x$ such that the ratios of $\delta_x$ and $\delta_\varphi$ to $\delta_s$ vanish. Moreover, in case of $\delta_s\to 0$, $\frac{\delta_x}{\delta_s} =\kronO(\delta_s^{1+\epsilon})$ and $\frac{\delta_\varphi}{\delta_s} =\kronO(\delta_s^{1+\epsilon})$ with $0<\epsilon\leq \frac{1}{2}$, the rate $\kronO(\delta_s^\epsilon)$  in operator norm can be achieved. In particular, the analysis
ensures convergence for asymptotically smaller image pixels than detector pixels which is in contrast to the common choice of using the same
magnitude of discretization for detectors and image pixels. %
Furthermore, we obtain $L^2$-convergence for each
projection of %
the pixel-driven sparse-angle Radon transform,
given suitable parameter choice, and thus ensuring that high-frequency artifacts vanish in each projection.
The mathematical scheme and analysis is extended to the fanbeam transform with analogous convergence results, showing that the basic concept of the discretization framework is applicable to a larger class of projection operators. Future works might extend this mathematical understanding to other projection operations, such as the conebeam transform or three-dimensional Radon transform \cite{Natterer:2001:MCT:500773}. Further practical experiments and investigations will also be necessary to fully understand the accuracy of pixel-driven methods. 

 \bibliographystyle{siamplain}
 \bibliography{references}

\begin{thebibliography}{10}

\bibitem{Ammari08_book_methematics_of_medical_imaging}
{\sc H.~Ammari}, {\em An introduction to mathematics of emerging biomedical
  imaging}, Springer, 2008.

\bibitem{SART_ALgo}
{\sc A.~H. Andersen and A.~C. Kak}, {\em {Simultaneous Algebraic Reconstruction
  Technique (SART): A superior implementation of the ART algorithm}},
  Ultrasonic Imaging, 6 (1984), pp.~81--94.

\bibitem{Averbuch_2D_Discrete_Radon}
{\sc A.~Averbuch, R.~Coifman, D.~Donoho, M.~Israeli, Y.~Shkolnisky, and
  I.~Sedelnikov}, {\em A framework for discrete integral transformations {II}
  --- {The} {2D} discrete {Radon} transform}, SIAM Journal on Scientific
  Computing, 30 (2008), pp.~785--803.

\bibitem{Averbuch01fastslant}
{\sc A.~Averbuch, R.~R. Coifman, D.~L. Donoho, M.~Israeli, and J.~Waldén},
  {\em Fast slant stack: A notion of {Radon} transform for data in a
  {Cartesian} grid which is rapidly computible, algebraically exact,
  geometrically faithful and invertible}.
\newblock \url{http://www.cs.tau.ac.il/~amir1/PS/FastRadon042001.pdf}, 2001.
\newblock Accessed 04/02/2020.

\bibitem{DRT_Beylkin_1987}
{\sc G.~Beylkin}, {\em Discrete {Radon} transform}, IEEE Transactions on
  Acoustics, Speech and Signal Processing, 35 (1987), pp.~162--172.

\bibitem{Easy_implementation}
{\sc F.~Bleichrodt, T.~van Leeuwen, W.~Palenstijn, W.~Aarle, J.~Sijbers, and
  K.~Batenburg}, {\em Easy implementation of advanced tomography algorithms
  using the astra toolbox with spot operators}, Numerical Algorithms,  (2015),
  pp.~1--25.

\bibitem{doi:10.1137/S0097539793256673}
{\sc M.~Brady}, {\em A fast discrete approximation algorithm for the {Radon}
  transform}, SIAM Journal on Computing, 27 (1998), pp.~107--119.

\bibitem{trove.nla.gov.au/work/21788406}
{\sc E.~O. Brigham}, {\em The fast Fourier transform}, Englewood Cliffs, N.J.:
  Prentice-Hall, 1974.

\bibitem{doi:10.1002/asna.200310234}
{\sc A.~Cameron, A.~Schwope, and S.~Vrielmann}, {\em Astrotomography},
  Astronomische Nachrichten, 325 (2004), pp.~179--180.

\bibitem{Chen_2015}
{\sc J.-L. Chen, L.~Li, L.-Y. Wang, A.-L. Cai, X.-Q. Xi, H.-M. Zhang, J.-X. Li,
  and B.~Yan}, {\em Fast parallel algorithm for three-dimensional
  distance-driven model in iterative computed tomography reconstruction},
  Chinese Physics B, 24 (2015), p.~028703.

\bibitem{1239600}
{\sc B.~{De Man} and S.~{Basu}}, {\em Distance-driven projection and
  backprojection}, in 2002 IEEE Nuclear Science Symposium Conference Record,
  vol.~3, 2002, pp.~1477--1480.

\bibitem{Deans_Radon_applications_1993}
{\sc S.~R. Deans}, {\em The {Radon} Transform and Some of Its Applications},
  Krieger Publishing Company, 1993.

\bibitem{Dong2013}
{\sc B.~Dong, J.~Li, and Z.~Shen}, {\em {X}-ray {CT} image reconstruction via
  wavelet frame based regularization and {Radon} domain inpainting}, Journal of
  Scientific Computing, 54 (2013), pp.~333--349.

\bibitem{1614066}
{\sc D.~L. {Donoho}}, {\em Compressed sensing}, IEEE Transactions on
  Information Theory, 52 (2006), pp.~1289--1306.

\bibitem{DREIKE1976459}
{\sc P.~Dreike and D.~P. Boyd}, {\em Convolution reconstruction of fan beam
  projections}, Computer Graphics and Image Processing, 5 (1976), pp.~459--469.

\bibitem{Du2017}
{\sc Y.~Du, G.~Yu, X.~Xiang, and X.~Wang}, {\em {GPU} accelerated voxel-driven
  forward projection for iterative reconstruction of cone-beam {CT}},
  BioMedical Engineering OnLine, 16 (2017), p.~2.

\bibitem{folland1984real}
{\sc G.~Folland}, {\em Real analysis: modern techniques and their
  applications}, Pure and applied mathematics, Wiley, 1984.

\bibitem{doi:10.1118/1.4761867}
{\sc H.~Gao}, {\em Fast parallel algorithms for the x-ray transform and its
  adjoint}, Medical Physics, 39 (2012), pp.~7110--7120.

\bibitem{Gilbert_Sirt}
{\sc P.~Gilbert}, {\em Iterative methods for the three-dimensional
  reconstruction of an object from projections}, Journal of Theoretical
  Biology, 36 (1972), pp.~105--117.

\bibitem{GORDON1970471}
{\sc R.~Gordon, R.~Bender, and G.~T. Herman}, {\em Algebraic reconstruction
  techniques ({ART}) for three-dimensional electron microscopy and {X}-ray
  photography}, Journal of Theoretical Biology, 29 (1970), pp.~471--481.

\bibitem{HaEfficientAR}
{\sc S.~Ha, H.~Li, and W.~K. Mueller}, {\em Efficient area-based ray
  integration using summed area tables and regression models}, in The 4th
  International Conference on Image Formation in X-Ray Computed Tomography,
  2016, pp.~507--510.

\bibitem{8013154}
{\sc S.~{Ha} and K.~{Mueller}}, {\em A look-up table-based ray integration
  framework for {2-D}/{3-D} forward and back projection in {X}-ray {CT}}, IEEE
  Transactions on Medical Imaging, 37 (2018), pp.~361--371.

\bibitem{HERMAN1976259}
{\sc G.~Herman, A.~Lakshminarayanan, and A.~Naparstek}, {\em Convolution
  reconstruction techniques for divergent beams}, Computers in Biology and
  Medicine, 6 (1976), pp.~259--271.

\bibitem{1000260}
{\sc S.~{Horbelt}, M.~{Liebling}, and M.~{Unser}}, {\em Discretization of the
  {Radon} transform and of its inverse by spline convolutions}, IEEE
  Transactions on Medical Imaging, 21 (2002), pp.~363--376.

\bibitem{Hsieh_CT_principles}
{\sc J.~Hsieh}, {\em Computed Tomography: Principles, Design, Artifacts, and
  Recent Advances}, WA: SPIE --- The International Society for Optical
  Engineering, 2009.

\bibitem{C8NR09058K}
{\sc R.~Huber, G.~Haberfehlner, M.~Holler, G.~Kothleitner, and K.~Bredies},
  {\em Total generalized variation regularization for multi-modal electron
  tomography}, Nanoscale, 11 (2019), pp.~5617--5632.

\bibitem{H_marik_2016}
{\sc U.~Hämarik, B.~Kaltenbacher, U.~Kangro, and E.~Resmerita}, {\em
  Regularization by discretization in banach spaces}, Inverse Problems, 32
  (2016), p.~035004.

\bibitem{doi:10.1118/1.3371691}
{\sc X.~Jia, Y.~Lou, R.~Li, W.~Y. Song, and S.~B. Jiang}, {\em {GPU}-based fast
  cone beam {CT} reconstruction from undersampled and noisy projection data via
  total variation}, Medical Physics, 37 (2010), pp.~1757--1760.

\bibitem{Johnson_1996}
{\sc E.~L. Johnson, H.~Wang, J.~W. McCormick, K.~L. Greer, R.~E. Coleman, and
  R.~J. Jaszczak}, {\em Pixel driven implementation of filtered backprojection
  for reconstruction of fan beam {SPECT} data using a position dependent
  effective projection bin length}, Physics in Medicine and Biology, 41 (1996),
  pp.~1439--1452.

\bibitem{4307572}
{\sc P.~M. {Joseph}}, {\em An improved algorithm for reprojecting rays through
  pixel images}, IEEE Transactions on Medical Imaging, 1 (1982), pp.~192--196.

\bibitem{KINGSTON20062040}
{\sc A.~Kingston}, {\em Orthogonal discrete {Radon} transform over pn$\times$pn
  images}, Signal Processing, 86 (2006), pp.~2040--2050.

\bibitem{1951AmJM...73..615L}
{\sc L.~{Landweber}}, {\em An iteration formula for {Fredholm} integral
  equations of the first kind}, American Journal of Mathematics, 73 (1951),
  pp.~615--624.

\bibitem{LaRoque:08}
{\sc S.~J. LaRoque, E.~Y. Sidky, and X.~Pan}, {\em Accurate image
  reconstruction from few-view and limited-angle data in diffraction
  tomography}, Journal of the Optical Society of America A, 25 (2008),
  pp.~1772--1782.

\bibitem{leary_analytical_2016}
{\sc R.~K. Leary and P.~A. Midgley}, {\em Analytical electron tomography}, MRS
  Bulletin, 41 (2016), pp.~531--536.

\bibitem{Liu_GPU_DDP_2017}
{\sc R.~Liu, L.~Fu, B.~De~Man, and H.~Yu}, {\em {GPU}-based branchless
  distance-driven projection and backprojection}, IEEE Transactions on
  Computational Imaging, 3 (2017), pp.~617--632.

\bibitem{Mairal:2014:SMI:2747300.2747301}
{\sc J.~Mairal, F.~Bach, and J.~Ponce}, {\em Sparse modeling for image and
  vision processing}, Found. Trends. Comput. Graph. Vis., 8 (2014),
  pp.~85--283.

\bibitem{Man_2004}
{\sc B.~D. Man and S.~Basu}, {\em Distance-driven projection and backprojection
  in three dimensions}, Physics in Medicine and Biology, 49 (2004),
  pp.~2463--2475.

\bibitem{markoe_2006}
{\sc A.~Markoe}, {\em Analytic Tomography}, Encyclopedia of Mathematics and its
  Applications, Cambridge University Press, 2006.

\bibitem{Natterer_Projection}
{\sc F.~Natterer}, {\em Regularisierung schlecht gestellter probleme durch
  projektionsverfahren}, Numerische Mathematik, 28 (1977), pp.~329--341.

\bibitem{Natterer:2001:MCT:500773}
{\sc F.~Natterer}, {\em The Mathematics of Computerized Tomography}, Society
  for Industrial and Applied Mathematics, Philadelphia, 2001.

\bibitem{Palenstijn2016}
{\sc W.~J. Palenstijn, J.~B{\'e}dorf, J.~Sijbers, and K.~J. Batenburg}, {\em A
  distributed astra toolbox}, Advanced Structural and Chemical Imaging, 2
  (2016), p.~19.

\bibitem{4331812}
{\sc T.~M. {Peters}}, {\em Algorithms for fast back- and re-projection in
  computed tomography}, IEEE Transactions on Nuclear Science, 28 (1981),
  pp.~3641--3647.

\bibitem{Qiao2017ThreeNA}
{\sc Z.~Qiao, G.~Redler, Z.~Gui, Y.~Qian, B.~Epel, and H.~Halpern}, {\em Three
  novel accurate pixel-driven projection methods for {2D} {CT} and {3D} {EPR}
  imaging}, Journal of X-ray science and technology, 26 (2017), pp.~83--102.

\bibitem{Radon17}
{\sc J.~Radon}, {\em Über die {Bestimmung} von {Funktionen} durch ihre
  {Integralwerte} längs gewisser {Mannigfaltigkeiten}}, Berichte über die
  Verhandlungen der Königlich-Sächsischen Akademie der Wissenschaften zu
  Leipzig, Mathematisch-Physische Klasse,  (1917), pp.~262--277.

\bibitem{RAWLINSON2010101}
{\sc N.~Rawlinson, S.~Pozgay, and S.~Fishwick}, {\em Seismic tomography: A
  window into deep earth}, Physics of the Earth and Planetary Interiors, 178
  (2010), pp.~101--135.

\bibitem{Scherzer:2008:VMI:1502016}
{\sc O.~Scherzer, M.~Grasmair, H.~Grossauer, M.~Haltmeier, and F.~Lenzen}, {\em
  Variational Methods in Imaging}, Springer, 1~ed., 2008.

\bibitem{1974ITNS...21...21S}
{\sc L.~A. {Shepp} and B.~F. {Logan}}, {\em {The Fourier reconstruction of a
  head section}}, IEEE Transactions on Nuclear Science, 21 (1974), pp.~21--43.

\bibitem{Siddon1985FastCO}
{\sc R.~L. Siddon}, {\em Fast calculation of the exact radiological path for a
  three-dimensional {CT} array}, Medical Physics, 12 (1985), pp.~252--255.

\bibitem{10.2307/j.ctt1bpmb07}
{\sc E.~M. Stein}, {\em Singular Integrals and Differentiability Properties of
  Functions}, Princeton University Press, 1970.

\bibitem{Path_through_pixels}
{\sc E.~Sundermann, F.~Jacobs, M.~Christiaens, B.~De~Sutter, and I.~Lemahieu},
  {\em A fast algorithm to calculate the exact radiological path through a
  pixel or voxel space}, Journal of Computing and Information Technology, 6
  (1998), pp.~89--94.

\bibitem{Syben_Python_reconstruction_in_NN_2019}
{\sc C.~Syben, M.~Michen, B.~Stimpel, S.~Seitz, S.~Ploner, and A.~K. Maier},
  {\em {PYRO}-{NN}: {Python} reconstruction operators in neural networks},
  Medical Physics, 46 (2019), pp.~5110--5115.

\bibitem{FDRT_Kelley_1993}
{\sc B.~T.~Kelley and V.~Madisetti}, {\em The fast discrete {Radon} transform.
  {I}. {Theory}}, IEEE Transactions on Image Processing, 2 (1993),
  pp.~382--400.

\bibitem{vanAarle:16}
{\sc W.~van Aarle, W.~J. Palenstijn, J.~Cant, E.~Janssens, F.~Bleichrodt,
  A.~Dabravolski, J.~D. Beenhouwer, K.~J. Batenburg, and J.~Sijbers}, {\em Fast
  and flexible x-ray tomography using the astra toolbox}, Opt. Express, 24
  (2016), pp.~25129--25147.

\bibitem{Emissiontomo_2005}
{\sc M.~N. Wernick and J.~N. Aarsvold}, {\em Emission Tomography: The
  Fundamentals of {PET} and {SPECT}}, Academic Press, San Diego, 2004.

\bibitem{Xie_CUDA_paralelization_2015}
{\sc L.~Xie, Y.~Hu, B.~Yan, L.~Wang, B.~Yang, W.~Liu, L.~Zhang, L.~Luo, H.~Shu,
  and Y.~Chen}, {\em An effective cuda parallelization of projection in
  iterative tomography reconstruction}, PloS One, 10 (2015), p.~e0142184.

\bibitem{Zeng_Gullman1996}
{\sc G.~Zeng and G.~Gullberg}, {\em A backprojection filtering algorithm for a
  spatially varying focal length collimator}, IEEE Transactions on Medical
  Imaging, 13 (1994), pp.~549--556.

\bibitem{701833}
{\sc G.~L. {Zeng} and G.~T. {Gullberg}}, {\em A ray-driven backprojector for
  backprojection filtering and filtered backprojection algorithms}, in 1993
  IEEE Conference Record Nuclear Science Symposium and Medical Imaging
  Conference, 1993, pp.~1199--1201.

\bibitem{870265}
{\sc G.~L. {Zeng} and G.~T. {Gullberg}}, {\em Unmatched projector/backprojector
  pairs in an iterative reconstruction algorithm}, IEEE Transactions on Medical
  Imaging, 19 (2000), pp.~548--555.

\bibitem{322963}
{\sc W.~{Zhuang}, S.~S. {Gopal}, and T.~J. {Hebert}}, {\em Numerical evaluation
  of methods for computing tomographic projections}, IEEE Transactions on
  Nuclear Science, 41 (1994), pp.~1660--1665.

\end{thebibliography}

\appendix
\section{Proof of  \texorpdfstring{\cref{lem:radon_angle_mod_continuity}}{Lemma 3.4}}
\label{app:proof}
In an analogous fashion to the proof of  \cref{lem:radon_mod_continuity}, we will estimate $\|\Radon^*\Radon - \Radon^*T_{0,\gamma}\Radon\|$, where $T_{0,\gamma}$ is a translation of the second argument by $\gamma \in \RR$. In order to do so, one computes for $f\in L^2(\Omega)$ and $x\in B(0,1)$, denoting by $A_\gamma x$ the rotation of $x$ by the angle $\gamma$,
\begin{align}
  [\Radon^*T_{0,\gamma}\Radon f] (x)&= \int_{[-\pi,\pi[} \int_{\RR}f\big( {(x \cdot \vartheta(\varphi))} \vartheta(\varphi+\gamma)+t \vartheta(\varphi+\gamma)^\perp \big)\dd{t} \dd{\varphi}
                                    \notag \\
                                  &= \int_{[-\pi,\pi[} \int_{\RR} f(A_\gamma x + t \vartheta(\varphi)^\perp) \dd{t} \dd{\varphi} %
                                      = \int_\Omega \frac{2}{|A_\gamma x-y|} f(y) \dd{y} \notag \\ &= 2 \int_\Omega k_\gamma(x,y)f(y)\dd{y}, \notag
\end{align}
where we used polar coordinates centered around $A_\gamma x$
and set
$k_\gamma(x,y)=\frac{1}{|A_\gamma x-y|}$. %
Arguing along the lines of Lemma~\ref{lem:radon_mod_continuity} and employing
the Cauchy--Schwarz estimate then leads to
\[
  \modcont{g}(0,\gamma)^2 \leq \norm{\Radon^*\Radon - \Radon^* T_{0,\gamma}\Radon}\norm{f}^2
  \quad\text{and}\quad
  \norm{\Radon^*\Radon - \Radon^* T_{0,\gamma} \Radon}
  \leq 2 \sqrt{M_1(\gamma) M_2(\gamma)},
\]
where
\[ M_1(\gamma) = \sup_{x \in \Omega} \int_\Omega \abs{k_0(x,y) - k_\gamma(x,y)} \dd{y}, \quad
  M_2(\gamma) = \sup_{y \in \Omega} \int_\Omega \abs{k_0(x,y) - k_\gamma(x,y)} \dd{x}.\]
In the following, we show that both $M_1(\gamma)$ and $M_2(\gamma)$ are $\kronO(\abs{\gamma \log(\abs{\gamma})})$ for $\abs{\gamma} \leq \frac{\pi}4$ which yields the claim.

Let us first estimate $M_1(\gamma)$. Fix $x \in \Omega$ and note
that for $y \in \Omega$ such that $\abs{x - y} \leq \abs{A_\gamma x - y}$,
we can estimate, using the triangle inequality and convexity of $t \mapsto t^{-1}$ on the positive axis,
\[
  \Bigabs{\frac1{\abs{x - y}} - \frac1{\abs{A_\gamma x - y}}}
  \leq
  \frac1{\abs{x - y}} - \frac1{\abs{x - y} + \abs{A_\gamma x - x}} \leq
  \frac{\abs{A_\gamma x - x}}{\abs{x - y}^2}.
\]
Now, $\Omega \subset B(x,2)$ such that with $d_x(\gamma) = \abs{A_\gamma x - x} \leq 2$, we get
\begin{align*}
  &\int_{\set{y \in \Omega}{\abs{x - y} \leq \abs{A_\gamma x - y}}}
    \abs{k_0(x,y) - k_\gamma(x,y)} \dd{y} \\
  &\qquad
  \leq \int_{B(x,d_x(\gamma))} \frac1{\abs{x-y}} \dd{y} +
  \int_{B(x,2) \setminus B(x,d_x(\gamma))}
  \frac{d_x(\gamma)}{\abs{x-y}^2} \dd{y} \\
  &\qquad
  \leq 2\pi d_x(\gamma)\bigl( 1 + \log(2) - \log(d_x(\gamma)) \bigr)
  = - 2\pi d_x(\gamma) \log\Bigl( \frac{d_x(\gamma)}{2\eulerE} \Bigr).
\end{align*}
The integral on the set of $y \in \Omega$ where $\abs{A_\gamma x - y} \leq \abs{x - y}$ can be estimated analogously with the same estimate. We have
$d_x(\gamma) \leq \sqrt{2(1 - \cos(\gamma))} \leq \abs{\gamma}$, such that
for $\abs{\gamma} \leq 1$, by monotonicity of $t \mapsto -t\log(t/(2\eulerE))$ on ${]{0,2}]}$, one obtains
\[
  M_1(\gamma) \leq \sup_{x \in \Omega} -4\pi d_x(\gamma) \log\Bigl(\frac{d_x(\gamma)}{2\eulerE}\Bigr)
  \leq 4\pi \Bigabs{\gamma \log\Bigl(\frac{\abs{\gamma}}{2\eulerE} \Bigr)} = 4\pi\abs{\gamma}\bigl(1 + \log(2) +
  \abs{\log(\abs{\gamma})} \bigr).
\]
Further restricting $\abs{\gamma} \leq \frac{\pi}{4} < 1$ gives $1 + \log(2) \leq c_0 \abs{\log(\abs{\gamma})}$ for some $c_0 > 0$ independent of $\gamma$, so we finally obtain
$M_1(\gamma) \leq c \abs{\gamma \log(\abs{\gamma})}$ for some $c > 0$.

For the remaining estimate of $M_2(\gamma)$, note that $|A_\gamma x-y|=|x-A_{-\gamma}y|$ since rotations leave norms unchanged. Therefore, $k_\gamma(x,y)
= k_{-\gamma}(y,x)$ and consequently, $M_2(\gamma) = M_1(-\gamma)$, so the claimed rate follows immediately.
\hfill\proofbox

\end{document}